\def\E{\mathrm{E}}
\def\Var{\mathrm{Var}}
\def\Cov{\mathrm{Cov}}
\def\mse{\mathrm{MSE}}
\newcommand*\bigcdot{\mathpalette\bigcdot@{.5}}
\newcommand*\bigcdot@[2]{\mathbin{\vcenter{\hbox{\scalebox{#2}{$\m@th#1\bullet$}}}}}
\crefname{hypothesis}{Hypothesis}{Hypotheses}
\title{On multilevel Monte Carlo methods for deterministic and uncertain hyperbolic systems\thanks{Submitted to the editors DATE.
\funding{SJ was supported by  National Key R\&D Program of China Project No.
2020YFA0712000, the Strategic Priority Research Program of Chinese Academy of Sciences,
XDA25010404 and NSFC grant No. 12031013. LZ's work was partially supported by NSFC grants No. 11871339 and 11861131004.}}}
\author{Junpeng Hu\thanks{School of Mathematical Sciences, Institute of Natural Sciences, and MOE-LSC, Shanghai Jiao Tong University, Shanghai, 200240, China
  (\email{hjp3268@sjtu.edu.cn}, \email{shijin-m@sjtu.edu.cn}, \email{lzhang2012@sjtu.edu.cn}).}
\and Shi Jin\footnotemark[2]
\and Jinglai Li\thanks{School of Mathematics, University of Birmingham, Birmingham, B15 2TT, United Kingdom
  (\email{j.li.10@bham.ac.uk})}
\and Lei Zhang\footnotemark[2]}
\begin{document}

\maketitle

% REQUIRED
\begin{abstract}
In this paper, we evaluate the performance of the multilevel Monte Carlo method (MLMC) for deterministic and uncertain hyperbolic systems, where randomness is introduced either in the modeling parameters or in the approximation algorithms. MLMC is a well known variance reduction method widely used to accelerate Monte Carlo (MC) sampling. However, we demonstrate in this paper that for hyperbolic systems, whether MLMC can achieve a real boost turns out to be delicate. The computational costs of MLMC and MC depend on the interplay among the accuracy (bias) and the computational cost of the numerical method for a single sample, as well as the variances of the sampled MLMC corrections or MC solutions. We characterize three regimes for the MLMC and MC performances using those parameters, and  show that MLMC may not accelerate MC and can even have a higher cost when the variances of MC solutions and MLMC corrections are of the same order. Our studies are carried out by a few prototype hyperbolic systems: a linear scalar equation, the Euler and shallow water equations, and a linear relaxation model, the above statements are proved analytically in some cases, and demonstrated numerically for the cases of the stochastic hyperbolic equations driven by white noise parameters and  Glimm's random choice method for deterministic hyperbolic equations. 

%Although the computational cost of MLMC is dominated by that of the standard MC in many cases ({\color {red} this sentence is not clear}), we show that the asymptotic computational cost of MLMC can be of the same order as MC in some cases,     
    
\end{abstract}

% REQUIRED
\begin{keywords}
  uncertainty quantification, multilevel Monte Carlo method, hyperbolic  equations, variance reduction
\end{keywords}

% REQUIRED
\begin{AMS}
  35R60, 35Q20, 65C05
\end{AMS}
\overfullrule=0pt

\section{Introduction}
The Monte Carlo (MC) method is a widely used computational tool from multidimensional integration, to statistical physics and finance. It is a sampling method which generates a large number of samples to retrieve statistical information that can  be a good approximation of  a given random or deterministic  problem, without curse-of-dimensionality. The accuracy of MC is governed by the variance of those samples and the bias induced by the discretization method. To further reduce the variance and to accelerate the sampling algorithm, Multilevel Monte Carlo (MLMC) method \cite{giles2008multilevel} can be adopted by decomposing the calculation of the quantity of interest (QoI) to hierarchical levels. MLMC has achieved great success for the efficient simulation of stochastic differential equations (SDE) \cite{abdulle2013stabilized,dereich2011multilevel,giles2008multilevel},  elliptic PDEs with random coefficients  \cite{barth2011multi,cliffe2011multilevel}, and particle methods for transport equations \cite{lovbak2021multilevel}.

In this paper, we consider linear and nonlinear hyperbolic systems  where MC and MLMC can be applied in the following circumstances: i), for hyperbolic equations with random parameters; ii),  for randomized algorithms of deterministic hyperbolic equations. These two cases have interesting applications in random algorithms for deterministic problems \cite{chorin1976, Glimm1965solutions}, or problems in uncertainty quantification \cite{AbMi, ayi2019analysis, JP-UQ, najm-UQCFD}. Although MLMC has be shown  to accelerate MC for many stochastic PDEs and randomized algorithms, we will demonstrate that, at least in the context of hyperbolic systems, the situation is more subtle. 

Let us start by considering the stochastic differential equation ${\rm d} u(t) = a(u, t) {\rm d}t + b(u,t) {\rm d}W$. To compute certain QoI, for example, $\E\left[u(T)\right]$, one can employ a discretization method such as the explicit Euler method, $u_{n+1} = u_{n} + a(u_n, t_n) \Delta t + b(u_n, t_n)\Delta W$, with time step $\Delta t$. Let $N$ be the number of samples, and $\sigma$ be the standard deviation of each sample. It is well known that, the mean square error ($\mse$) is bounded by $c_1 N^{-1}\sigma + c_2 \Delta t^2$,
where $c_1$, $c_2$ are positive constants. The first term comes from the statistical error of the MC estimation, and the second term comes from the bias, which is in turn bounded by the discretization error of the numerical method. For SDE, $\sigma$ is a constant independent of $\Delta t$, and we can take $\Delta t\approx  O(\delta)$, $N\approx O(\delta^{-2})$, such that the $\mse$ is $O(\delta^2)$ and the computational cost is $O(\delta^{-3})$. 

%\lzz{is this consistent with the error estimate of Euler's method for SDE? hjp: consistent to the weak convergence of Euler method} 

The multilevel Monte Carlo (MLMC) method proposed by Giles \cite{giles2008multilevel} et. al. is one of the state-of-the-art variance reduction techniques to further reduce the computational cost. MLMC samples the solution $P_0^{(0)}$ on the coarsest level and corrections $P^{(l)}_l - P^{(l-1)}_l$ on the finer levels, then assembles an estimator for the QoI on the finest mesh by a telescoping sum. A sample correction $P^{(l)}_l - P^{(l-1)}_l$ can be obtained using solutions $P^{(l)}_l$, $P^{(l-1)}_l$ with the same set of random variables (random trajectory) on the $l$th level, but with different time steps $\Delta t_l $ and $\Delta t_{l-1}$. The key to MLMC is the reduction of $\Var\left[P_l^{(l)} - P_l^{(l-1)}\right]$ without changing the expectation $\E\left[P_l^{(l)} - P_l^{(l-1)}\right]$. It can be proved that for SDE, the computational cost of MLMC can be reduced to $O(\delta^{-2}(\log\delta)^2)$ \cite[Theorem 3.1]{giles2008multilevel}, under certain conditions for the variances of corrections.

% MLMC is  easy to implement and can be combined with other variance reduction methods such as stratified sampling \cite{glasserman2004monte}, importance sampling \cite{kebaier2018coupling} and quasi Monte Carlo methods \cite{l2004quasi,joy1996quasi,giles2008quasi} to achieve better performance.

For hyperbolic equations, as one needs to consider both spatial and temporal variables, the situation is more complex. It turns out that, whether or not MLMC can enhance the performance of a MC sampling algorithm for  hyperbolic problems, depends on the interplay between the bias induced by the numerical method for a single sample, variances of the sampled MC solutions or MLMC corrections, and the computational cost of the numerical method for a single sample, which can be characterized by problem dependent parameters.
%$\alpha$, $\beta_0$, $\beta$, and $\gamma$, 
%respectively.  

To be more specific, let $P_l^{(l)}$ be the MC solution at the $l$th level, and $Y_l:=P^{(l)}_l-P^{(l-1)}_{l}$ be the MLMC correction at the $l$th level ($l\geq 1$). Let 
\begin{itemize}
    \item $\alpha$ denote the order of accuracy for the numerical method of a single sample (bias). In this paper, we always use first order method with $\alpha=1$ unless otherwise stated. 
    \item $\beta_0$ and $\beta$ denote the orders of the variances of MC solutions and MLMC corrections, respectively. Namely $\mathsf{V}_l :=\Var\left[P_l^{(l)}\right] \leq c_2 (\Delta t_l)^{\beta_0}$, and $V_l:=\Var\left[Y_l\right] \leq c_2 (\Delta t_l)^{\beta}$.  
    \item $\gamma$ denote the order of the computational cost for a single sample such that the cost is $O(\Delta t^{-\gamma})$. For explicit numerical methods, $\gamma$ corresponds to the physical dimension of the problem, for example, $\gamma=1$ for SDE, and $\gamma=2$ for 1D SPDE. %{\color {red} Such $\gamma$ is not cost. It is the order of something}
\end{itemize}
%For more details, please also see \S~\ref{sec:mlmc}. 

We can estimate the cost of MLMC and MC algorithms through parameters $\alpha$, $\beta$ or $\beta_0$, $\gamma$, and the desired accuracy $\delta>0$, using \cref{thm:costmlmc} and \cref{thm:costmc} in \S~\ref{sec:mlmc}. In general, when $\beta_0\leq\beta \leq  \gamma$, up to a logarithmic factor, the cost of MC is $O(\delta^{-2-(\gamma-\beta_0)/\alpha})$, and the cost of MLMC is $O(\delta^{-2-(\gamma-\beta)/\alpha})$. Therefore, when $\beta_0 = \beta$, the computational costs of MC and MLMC are of the same order. Namely, MLMC does not accelerate and may even cost more than MC. In fact, we have identified a few such examples, such as hyperbolic equations with white noise parameters, as well as the random choice algorithm for the (deterministic) Jin-Xin model
\cite{jin1995relaxation} which is a hyperbolic system with relaxation. 

We list the results in \cref{tab:comparison} for the problems from literature \cite{abdulle2013stabilized,barth2011multi,cliffe2011multilevel,dereich2011multilevel,giles2008multilevel,lovbak2021multilevel} and the hyperbolic  problems that we study in this paper.

\begin{itemize}
    \item Case 1: stochastic differential equation (SDE) \cite{abdulle2013stabilized,dereich2011multilevel,giles2008multilevel};
    \item Case 2: random elliptic equation \cite{barth2011multi,cliffe2011multilevel};
    \item Case 3: particle method for transport equations \cite{lovbak2021multilevel}; % (BGK operator) 
    \item Case 4: scalar advection equation with random time dependent velocity $a(t, \omega)$, see \S~\ref{sec:comparison:scalar},
        \begin{itemize}
            \item 4.1, $a$ is piecewise constant in finite number of time intervals, 
            \item 4.2, $a$ is uniform white noise in time;
        \end{itemize} 
    \item Case 5: Euler equations with random time dependent adiabatic constant $\lambda(t, \omega)$, see \S~\ref{sec:comparison:euler},
            \begin{itemize}
            \item 5.1, $\lambda$ is piecewise constant in finite number of time intervals, 
            \item 5.2, $\lambda$ is uniform white noise in time;
        \end{itemize} 
    \item Case 6: shallow water equation with random bottom topography $B(x, \omega)$, see  \S~\ref{sec:comparison:shallowwater},
            \begin{itemize}
            \item 6.1, $B$ has a finite number of random parameters, 
            \item 6.2, $B$ has uniform white noise parameters;
        \end{itemize} 
    \item Case 7: random choice method for deterministic Jin-Xin model, see \S~\ref{sec:comparison:randomchoice},
            \begin{itemize}
            \item 7.1, semi-random case, only use random choice for the relaxation step,
            \item 7.2, fully-random case, use random choice for both the convection and the relaxation steps. 
        \end{itemize} 
\end{itemize}

\begin{table}[H]
\begin{center}
 \begin{tabular}{|l |c|c |c |c |c| c| c| c| c| c|} 
 \hline%
      \multicolumn{2}{|c|}{\multirow{2}{*}{Cases}}  & \multicolumn{2}{c|}{Dimension} & \multicolumn{4}{c|}{Parameters} & \multicolumn{2}{c|}{cost} & \multirow{2}{*}{Regime} 
    \\\cline{3-10}
      \multicolumn{2}{|c|}{} & physical & random ($K$) & $\alpha$ & $\beta_0$ & $\beta$ & $\gamma$ & MC & MLMC & 
    \\\hline
      \multicolumn{2}{|c|}{1}  & $t$ & $T/\Delta t$, in $t$ & 1 & 0 & 1 & 1 & $\delta^{-3}$ & $\delta^{-2}(\log\delta^{-1})^2$ & I
    \\\hline
      \multicolumn{2}{|c|}{2}  &  $x$ & $O(1)$, in $x$ & $1.75^*$  & $0^*$ & $2^*$ & 1 & $\delta^{-2.57}$ & $\delta^{-2}$ & I
    \\\hline
      \multicolumn{2}{|c|}{3}  &  $x,t$ & $T/\Delta t$, in $t$ & 1  & $0^*$ & 1 & 1 & $\delta^{-3}$ & $\delta^{-2}(\log\delta^{-1})^2$ & I
    \\\hline
      \multirow{2}{*}{4} & 4.1  & $x, t$ & $O(1)$, in $t$ & 1 & 0 & 2 & 2 & $\delta^{-4}$ & $\delta^{-2}(\log\delta^{-1})^2$ & I
    \\\cline{2-11}
       & 4.2  & $x,t$ & $T/\Delta t$, in $t$ & 1 & 1 & 1 & 2 & $\delta^{-3}$ & $\delta^{-3}$ & II
    \\\hline
      \multirow{2}{*}{5} & 5.1 & $x,t$  & $O(1)$, in $t$ & 1 & $0^*$ & $1.5^*$ & 2 & $\delta^{-4}$ & $\delta^{-2.5}$ & I
    \\\cline{2-11}
       & 5.2 & $x,t$  & $T/\Delta t$, in $t$ & 1 & $1^*$ & $1^*$ & 2 & $\delta^{-3}$ & $\delta^{-3}$ & II
    \\\hline
      \multirow{2}{*}{6} & 6.1 & $x,t$ & $O(1)$, in $x$ & 1 & $0^*$ & $1^*$ & 2 & $\delta^{-4}$ & $\delta^{-3}$ & I
    \\\cline{2-11}
       & 6.2  &  $x,t$ & $T/\Delta t$, in $x$ & 1 & $0^*$ & $0^*$ & 2 & $\delta^{-4}$ & $\delta^{-4}$ & III
    \\\hline
      \multirow{2}{*}{7} & 7.1 &  $x,t$ & $\left(T/\Delta t\right)^2$, in $x,t$ & 1 & $0.5^*$ & $0.5^*$ & 2 & $\delta^{-3.5}$  & $\delta^{-3.5}$ & II
    \\\cline{2-11}
       & 7.2 &  $x,t$ & $\left(T/\Delta t\right)^2$, in $x,t$ & 1 & $0^*$ & $0^*$ & 2 & $\delta^{-4}$ & $\delta^{-4}$ & III
    \\\hline
        \end{tabular}
\end{center}
    \caption{Comparison of MLML and MC. Parameters $\alpha$, $\beta_0$, $\beta$, and $\gamma$ are either analytically derived or obtained by numerical experiments which are marked with  a $*$. The complexities are computed using \cref{thm:costmlmc} and \cref{thm:costmc} in \S~\ref{sec:mlmc}, and numerically validated in literature or in the numerical experiments in \S~\ref{sec:comparison:scalar}, \S~\ref{sec:comparison:eulerandshallow} and \S~\ref{sec:comparison:randomchoice}. The expressions `in $x$', `in $t$', `in $x,t$', in the `random ($K$)' column indicate the dependence on the random parameters, $K$ is the number of random parameters (random dimension) in the problem, $T$ is the computational time, and $\Delta t$ is the time step of the numerical method. }
    \label{tab:comparison}
\end{table}

%\lzz{nature of the noise}

From above results in \cref{tab:comparison}, we can identify three regimes: 
\begin{itemize}
\item Regime I, $\beta > \beta_0=0$, for example, SDE and SPDEs with finite number of random parameters, where MLMC outperforms MC by orders of magnitude.
\item 
Regime II, $\beta_0 = \beta > 0$, where we can observe the decay of variances of MC solutions and MLMC corrections. The costs of both MLMC and MC are of the order $O(\delta^{-4+\beta})$, which is the case for the scalar advection equation and the Euler equations with uniform white noise parameters in time, and the semi-random choice method for the Jin-Xin model.
\item 
Regime III, $\beta_0 = \beta \simeq 0$, namely, the variances of MC solutions and MLMC corrections do not decay with respect to discretization parameters (time step and mesh size). The costs of both MLMC and MC are of the worst possible order $\delta^{-4}$, which is the case for the shallow water equation with uniform white noise parametrized topography and the fully-random choice method for the Jin-Xin model. 
\end{itemize}

\subsubsection*{Outline}
The rest of this paper is organized as follows. In \S~\ref{sec:mlmc}, we describe the principle and algorithm of MLMC, and present the main theorems -- the cost analysis of MLMC and standard MC methods. In \S~\ref{sec:comparison:scalar}, we first carry out an in-depth analysis for the scalar advection equation with a random velocity, then justify it with numerical experiments. In \S~\ref{sec:comparison:eulerandshallow} and \S~\ref{sec:comparison:randomchoice}, we provide comprehensive numerical experiments for other test examples, including hyperbolic equations with random parameters such as the Euler equations and the shallow water equations, and the random choice method for the multiscale Jin-Xin model. We conclude the paper in Section \S~\ref{sec:conclusion}. 

\subsubsection*{Notation}

%
%For a finite set $A$, we will use $\#A$ to denote the cardinality of $A$.

%The closed ball with radius $r$ and center $x$ is denoted by $B_r(x)$.
The symbol $C$ (or $c$) denotes generic positive constants that may change from one line
of an estimate to the next. When estimating rates of decay or convergence, $C$
will always remain independent of approximation parameters. The dependence of $C$ will be clear from the context or stated explicitly. The expectation and variance of a random variable $X$ is abbreviated as $\E\left[X\right]$ and $\Var\left[X\right]$ respectively. The covariance of two random variables $X$, $Y$ is abbreviated as $\Cov\left[X, Y\right]$. To further simplify notation we will often write $\approx$ to mean that both sides of $\approx$ are equivalent infinitesimal.

%-------------------------
\section{The Multilevel Monte Carlo method}
\label{sec:mlmc}
%-------------------------

In this section, we first introduce the Multilevel Monte Carlo (MLMC) method and formulate the MC and MLMC algorithms in \S~\ref{sec:mlmc:formulation}.    
Then we present the theorems for their computational costs in \S~\ref{sec:mlmc:costanalysis}, which provide the crucial theoretical foundation of the paper. 

%-------------------------
\subsection{Formulation and algorithm}
\label{sec:mlmc:formulation}
%-------------------------

%-------------------------
\subsubsection{MLMC formulation}
\label{sec:mlmc:formulation:formulation}
%-------------------------

Given the time dependent hyperbolic problem with randomness, we denote $\Delta t_l$ as the time step on hierarchical levels $l=0,1,\dots,L$, with $\Delta t_{l-1}=\Gamma \Delta t_l$ with the integer refinement ratio $\Gamma>1$, and $\Delta x_l$ as the corresponding grid size which satisfies the CFL stability condition. 

For simplicity, we denote $P^{(l)}$ as the random numerical solution at final time $T$ computed using time step $\Delta t_l$. The QoI $\E\left[P^{(L)}\right]$ for MLMC can be expanded as a telescoping sum
\begin{equation}
    \E\left[P^{(L)}\right] = \E\left[P^{(0)}\right]+\sum_{l=1}^{L}\left(\E\left[P^{(l)}\right]-\E\left[P^{(l-1)}\right]\right).
    \label{eqn:MLAPMC-1}
\end{equation}

Instead of approximating $\E\left[P^{(L)}\right]$ on the finest level $L$, we can approximate the terms on the right hand side of \eqref{eqn:MLAPMC-1} by independent samples on hierarchical levels $l=0,1,\dots,L$, and sum them together. The name of the game is now how to reduce the sample variances.

To be more precise, let $P_l^{(l')}(s)$ be the $s$th sampled solution at final time $T$ computed using random variables on the $l$th level, as well as the time step $\Delta t_{l'}$ and the grid size $\Delta x_{l'}$ on the level $l'$, with $l'=l-1, l$. Let $N_l$ be the number of samples on the $l$th level. That is to say, we have samples $P_0^{(0)}(s)$ on the $0$th level, with $s=1, \dots, N_0$, and $P_l^{(l)}(s)$, $P_l^{(l-1)}(s)$ on the $l$th level, with $s=1, \dots, N_l$, $l=1,\dots,L$. 

We call $P_l^{(l)}- P_l^{(l-1)}$ as the MLMC correction. Since $P_l^{(l)}(s)$ and $P_l^{(l-1)}(s)$ are solutions on successive levels using the same random variables on the $l$th level, we have $\E\left[P_l^{(l)}- P_l^{(l-1)}\right] = \E\left[P^{(l)}\right]-\E\left[P^{(l-1)}\right]$. At the same time, we hope that $\Var\left[P_l^{(l)} - P_l^{(l-1)}\right]$ is small.

%level refers to the depth of Monte Carlo, 
%resolution refers to the grid size (spatial and temporal). 

Hence, we have the following MLMC estimate
\begin{equation}\label{eqn:MLAPMC-2}
\begin{aligned}
    \hat{P}^{(L)} &:= N_0^{-1}\sum_{s=1}^{N_0}P_0^{(0)}(s) + \sum_{l=1}^{L}N_l^{-1}\sum_{s=1}^{N_l}\left(P_l^{(l)}(s)-P_l^{(l-1)}(s)\right) \\
    & \triangleq \hat{P}_0^{(0)} + \sum_{l=1}^{L} \left(\hat{P}_l^{(l)} - \hat{P}_l^{(l-1)}\right).
\end{aligned}
\end{equation}

%-------------------------
\subsubsection{Optimal parameters for MLMC}
\label{sec:mlmc:formulation:formulation:optimalparameter}
%-------------------------

In the MLMC estimate \eqref{eqn:MLAPMC-2}, one needs to choose the number of samples $N_l$ on hierarchical levels. The optimal value of $N_l$ can be determined in terms of the cost and variance of MLMC corrections by the following argument. 

Let $C_0$, $V_0$ be the computational cost and variance of $P_0^{(0)}$, and $C_l$, $V_l$ be the computational cost and variance of $P_ l^{(l)} - P_l^{(l-1)}$,  then the computational cost $\mathfrak{C}$ and the variance of $\hat{P}^{(L)}$ are
\begin{equation}\label{eqn:costvar}
        \mathfrak{C} = \sum_{l=0}^L N_l C_l, \quad \Var\left[\hat{P}^{(L)}\right] = \sum_{l=0}^L N_l^{-1} V_l.
\end{equation}

Let $R=\Var\left[\hat{P}^{(L)}\right]$ be the variance of $\hat{P}^{(L)}$, and our aim is to minimize the cost with respect to $N_l$. Using the Lagrange function $L$ with the multiplier $\mu^2$, we have
\begin{equation}\label{eqn:lagrange}
        L(N_l, \mu^2) = \sum_{l=0}^L N_l C_l + \mu^2\left(\sum_{l=0}^L N_l^{-1} V_l-R\right).
\end{equation}

The Kuhn-Tucker conditions (KKT) condition writes
\begin{equation}\label{eqn:KKT}
        \frac{\partial L(N_l, \mu^2)}{\partial N_l} = C_l - \mu^2 V_l N_l^{-2} = 0, \quad \Rightarrow N_l = \mu\sqrt{V_l/C_l},
\end{equation}
which means that the optimal $N_l$ is proportional to $\sqrt{V_l/C_l}$. By substituting \eqref{eqn:KKT} into $R=\Var\left[\hat{P}^{(L)}\right]$, one gets
\begin{equation}\label{eqn:muNvalue}
    \mu = R^{-1}\sum_{l=0}^{L} \sqrt{V_{l}C_{l}}, \quad N_l = R^{-1}\sqrt{V_l/C_l}\left(\sum_{l=0}^{L} \sqrt{V_{l}C_{l}}\right).
\end{equation}

\begin{remark}
  The variance $V_l$ is not known {\it a priori}, and one needs to estimate these values through samples in the implementation. 
\end{remark}

%-------------------------
\subsubsection{An adaptive algorithm}
\label{sec:mlmc:formulation:formulation:adaptivealgorithm}
%-------------------------

In this paper, we adopt the MLMC algorithm (as well as MC) with adaptively refined discretization levels as formulated in Giles' paper \cite{giles2008multilevel}. We decompose the $\mse$ into bias and variance, and require that both the squared bias and the variance of the estimator are bounded by $\delta^2/2$. This requirement leads to the adaptive criteria for the finest level $L$. The difference of MLMC and MC is that MLMC combines information from all the levels, while MC  only uses the samples on the finest level and the coarser levels are only used to determine the finest level. 

For simplicity, we denote $P$ as the exact solution at the final time $T$, $\hat{P}_l^{(l)}$, $\hat{P}_l^{(l-1)}$ as the estimated MLMC solutions on level $l$ with time step $\Delta t_l$, $\Delta t_{l-1}=\Gamma\Delta t_l$ respectively, and
\begin{equation}\label{def:Yl}
  \hat{Y}_l = \left\{
    \begin{aligned}
      &\hat{P}_0^{(0)}, & l = 0,\\
      &\hat{P}_l^{(l)}-\hat{P}_l^{(l-1)}, & l > 0,
    \end{aligned}\right.
\end{equation}
as estimated MLMC corrections. 

Given the accuracy threshold $\delta >0$ for the MLMC algorithm, we start from the initial $0$th level. Using an initial $N_0 = N_I$ number of samples, we obtain the Monte Carlo estimates $\hat{V}_0$, $\hat{Y}_0$ of $V_0$, $Y_0$ respectively. Then the updated number of samples on $0$th level $N_{0}^{\prime}$ can be determined by $N_{0}^{\prime} = \left\lceil 2\delta^{-2} \hat{V}_{0} \right\rceil$. If $N_0^\prime > N_0$, we evaluate extra samples and update $\hat{V}_0$, $\hat{Y}_0$, $N_0 = N_0^\prime$.

For the iteration $l\rightarrow l+1$, $V_{l+1}$, $Y_{l+1}$ can be estimated by $\hat{V}_{l+1}$, $\hat{Y}_{l+1}$ at the new level $l+1$ using $N_{l+1}=N_I$ samples, then the optimal number of samples on the $k$th level $N_{k}^\prime$, with $k = 0, \dots, l+1$, can be determined by
\begin{equation}\label{eqn:detN}
    N_{k}^\prime = \left\lceil 2\delta^{-2} \sqrt{\hat{V}_{k}/C_{k}}\left( \sum_{j=0}^{l+1} \sqrt{\hat{V}_j C_j} \right) \right\rceil,
\end{equation}
where $C_j$ is the computational cost on level $j$. 
If $N_{k}^\prime > N_{k}$, we let $N_{k}=N_{k}^\prime$, evaluate extra samples and update $\hat{V}_{k}$, $\hat{Y}_{k}$. 

We can identify the finest level $L$ by an a posterior error estimate through the requirement that the squared bias of the estimator is bounded by $\delta^2/2$. For a numerical scheme with (weak) convergence rate $\alpha$, we have
\begin{equation}
    \E\left[\hat{P}_{l}^{(l)} - P\right] = \E\left[ P_{l}^{(l)} - P \right] \approx c_1 (\Delta t_l)^\alpha.
\end{equation}

The following estimate of the correction $\E\left[\hat{P}_l^{(l)}-\hat{P}_l^{(l-1)}\right]$ 
\begin{equation}
    \E\left[\hat{P}_l^{(l)}-\hat{P}_l^{(l-1)}\right] \approx ( 1 - \Gamma^\alpha) c_1 (\Delta t_l)^\alpha \approx ( 1 - \Gamma^\alpha) \E\left[ \hat{P}_l^{(l)} - P\right],
\end{equation}
can be used to obtain the stopping criteria
\begin{equation}\label{eqn:criteria}
    \left|\hat{Y}_l\right| = \left|\hat{P}_l^{(l)}-\hat{P}_l^{(l-1)}\right| \leq \frac{\Gamma^\alpha-1}{\sqrt{2}}\delta,
\end{equation}
which leads to $\left|\E\left[\hat{P}_l^{(l)} - P\right]\right|^2 \leq \delta^2/2$. 

We summarize the adaptive MLMC algorithm \cite{giles2008multilevel} as follows.

\begin{algorithm}[H]
  \caption{Multilevel Monte Carlo method}\label{alg:mlmc}
  Given $N_{I}\in \mathbb{N} $. 
  \begin{algorithmic}[1]
  \FOR{$l = 0, 1, \dots$}
  \STATE Draw $N_{l} = N_I$ samples to obtain $\hat{V}_l$, $\hat{Y}_l$;
  \STATE Define optimal $N_k^\prime$, $k=0,\dots,l$ using \eqref{eqn:detN};
  \STATE If $N_{k}^\prime>N_k$, generate extra samples and update $\hat{V}_k$, $\hat{Y}_k$, $N_k=N_k^\prime$ for $k=0,\dots,l$;
  \IF{$l\geq 1$ \& \eqref{eqn:criteria} is \TRUE}
    \item Stop with $L=l$;
  \ENDIF
  \STATE $l = l+1$;
  \ENDFOR
  \end{algorithmic}
\end{algorithm}

For MC, we only consider the solution $\hat{P}^{(L)} = \hat{P}_{L}^{(L)}$ on the finest level. Denoting $\mathsf{V}_l = \Var\left[P_{l}^{(l)}\right]$, we require  
\begin{equation}
\Var\left[\hat{P}_{L}^{(L)}\right] = N_{L}^{-1}\mathsf{V}_{L} \leq \delta^2/2,
\label{eqn:varmc}
\end{equation}
in MC. For comparison, we require $\Var\left[\hat{P}^{(L)}\right] = \sum_{l=0}^{L} N_{l}^{-1}V_{l} \leq \delta^2/2$ in MLMC. 

By \eqref{eqn:varmc}, the optimal number of samples $N_l^\prime$ can be taken as
\begin{equation}\label{eqn:detNmc}
    N_l^\prime = \left\lceil 2\delta^{-2}\hat{\mathsf{V}}_{l}  \right\rceil,
\end{equation}
where $\hat{\mathsf{V}}_{l}$ is the Monte Carlo estimate of $\mathsf{V}_{l}$ using initial $N_I$ samples.
We use a similar a posterior estimate as the stopping criteria
\begin{equation}\label{eqn:criteriamc}
    \left|\hat{P}_l^{(l)}-\hat{P}_{l-1}^{(l-1)}\right| < \frac{\Gamma^\alpha-1}{\sqrt{2}}\delta.
\end{equation}

The adaptive MC algorithm can be given in the spirit of adaptive MLMC \cref{alg:mlmc}.

\begin{algorithm}[H]
  \caption{Monte Carlo method}\label{alg:mc}
  Given $N_I\in \mathbb{N} $. 
  \begin{algorithmic}[1]
  \FOR{$l = 0, 1, \dots$}
  \STATE Draw $N_l = N_{I}$ samples to obtain $\hat{\mathsf{V}}_l$, $\hat{P}_l^{(l)}$;
  \STATE Define $N_l^\prime$ using \eqref{eqn:detNmc};
  \STATE If $N_l^\prime > N_l$, generate extra samples at level $l$ and update $\hat{\mathsf{V}}_l$, $\hat{P}_l^{(l)}$, $N_l=N_l^\prime$;
  \IF{$l\geq 1$ \& \eqref{eqn:criteriamc} is \TRUE}
    \item Stop with $L = l$;
  \ENDIF
  \STATE $l = l+1$;
  \ENDFOR
  \end{algorithmic}
\end{algorithm}

%------------------------------------------
\subsection{Cost analysis}
\label{sec:mlmc:costanalysis}
%------------------------------------------

In this section, we present cost theorems for MLMC and MC methods, as prescribed in \cref{alg:mlmc} and \cref{alg:mc}, which are crucial for the hyperbolic equation applications. The proofs are given in Appendix \ref{sec:appendix:costanalysis:costMLMC} and \ref{sec:appendix:costanalysis:compleixtyMC}, respectively.

\begin{theorem}[MLMC Computational Cost]
Assume that the MLMC estimators $\hat{Y}_l$ ($l\in\mathbb{N}$) defined in \eqref{def:Yl} has the following properties:
\begin{enumerate}[{\textbf{A}}1.]
  \item\label{Assum:1-1} $\E\left[P^{(l)}-P\right] \leq c_1 (\Delta t_l)^{\alpha}$,
  \item\label{Assum:1-2} $\Var\left[\hat{Y}_l\right] = N_l^{-1}V_l \leq c_2 N_l^{-1} (\Delta t_l)^{\beta}$,
  \item\label{Assum:1-3} $\mathfrak{C}\left[\hat{Y}_l\right]=N_lC_l \leq c_3 N_l (\Delta t_l)^{-\gamma}$,
\end{enumerate}
where $2\alpha\geq \gamma$, $\beta$, $\gamma$, $c_1$, $c_2$, $c_3$ are positive constants. There exist $L\in\mathbb{N}^+$, $N_l\in \mathbb{N}^+$, $l=0, ..., L$, and a constant $c_4>0$ such that for any $\delta < e^{-1}$, the multilevel estimator
\begin{equation}
  \hat{P}^{(L)} = \sum_{l=0}^L \hat{Y}_l,
\end{equation}
has the following $\mse$ bound
\begin{equation}
  \mse \equiv \E\left[\left(\hat{P}^{(L)}-\E\left[P\right]\right)^2\right] < \delta^2,
\end{equation}
and the computational cost $\mathfrak{C}$ is bounded by
\begin{equation}
  \begin{aligned}
  \mathfrak{C} &\leq \left\{ \begin{aligned}  
  & c_4\delta^{-2}, & \beta > \gamma, \\
  & c_4\delta^{-2}(\log \delta^{-1})^2, & \beta = \gamma,\\
  & c_4\delta^{-2-(\gamma-\beta)/\alpha}, & 0 < \beta < \gamma.
  \end{aligned}\right.
  \end{aligned}
\end{equation}
\label{thm:costmlmc}
\end{theorem}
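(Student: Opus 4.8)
The plan is to follow the now-classical complexity argument of Giles for MLMC, adapting it to the notation of this paper. The starting point is the mean-square-error decomposition $\mse = \left(\E\left[\hat P^{(L)} - P\right]\right)^2 + \Var\left[\hat P^{(L)}\right]$, so it suffices to make each of the two terms smaller than $\delta^2/2$. For the bias term I would use assumption \textbf{A}\ref{Assum:1-1}: since $\E\left[\hat P^{(L)} - P\right] = \E\left[P^{(L)} - P\right] \le c_1 (\Delta t_L)^\alpha$, choosing $L$ to be the smallest integer with $c_1 (\Delta t_L)^\alpha \le \delta/\sqrt 2$ forces the squared bias below $\delta^2/2$; this fixes $L \approx \alpha^{-1}\log_\Gamma(\delta^{-1})$ up to an additive constant, and hence $\Delta t_L \approx \delta^{1/\alpha}$ and $(\Delta t_l)^{-\gamma}\le C\,\delta^{-\gamma/\alpha}\Gamma^{-\gamma(L-l)}$.

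Next I would insert the optimal sample sizes $N_l = \left\lceil 2\delta^{-2}\sqrt{V_l/C_l}\sum_{j=0}^L \sqrt{V_j C_j}\right\rceil$ coming from the Lagrange/KKT computation in \eqref{eqn:muNvalue}. Dropping the ceiling (at the cost of an extra additive term bounded by $\sum_l C_l$, which is harmless), one gets $\Var[\hat P^{(L)}] = \sum_l N_l^{-1}V_l \le \delta^2/2$ by construction, and the cost becomes $\mathfrak C \le 2\delta^{-2}\left(\sum_{l=0}^L \sqrt{V_l C_l}\right)^2 + \sum_{l=0}^L C_l$. Using \textbf{A}\ref{Assum:1-2}--\textbf{A}\ref{Assum:1-3} we have $\sqrt{V_l C_l}\le c\,(\Delta t_l)^{(\beta-\gamma)/2}$, and since $\Delta t_l = \Gamma^{-(l)}\Delta t_0$ this is a geometric-type sum in $l$ with ratio $\Gamma^{(\gamma-\beta)/2}$. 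The three cases of the theorem are then exactly the three behaviors of this geometric sum: when $\beta>\gamma$ the sum $\sum_l \sqrt{V_l C_l}$ converges and is $O(1)$, giving $\mathfrak C = O(\delta^{-2})$; when $\beta=\gamma$ each term is comparable and there are $L+1 = O(\log\delta^{-1})$ of them, giving $\left(\sum_l\sqrt{V_lC_l}\right)^2 = O((\log\delta^{-1})^2)$ and hence $\mathfrak C = O(\delta^{-2}(\log\delta^{-1})^2)$; when $0<\beta<\gamma$ the sum is dominated by the finest level, $\sum_l \sqrt{V_lC_l}\le C(\Delta t_L)^{(\beta-\gamma)/2}\le C\delta^{(\beta-\gamma)/(2\alpha)}$, which squares to $\delta^{(\beta-\gamma)/\alpha}$ and yields $\mathfrak C = O(\delta^{-2-(\gamma-\beta)/\alpha})$. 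Finally one checks that the leftover term $\sum_l C_l \le C\,\delta^{-\gamma/\alpha}$ is dominated in every case, using the hypothesis $2\alpha \ge \gamma$ (which guarantees $\delta^{-\gamma/\alpha} \le \delta^{-2}$), and that the chosen $N_l$ are indeed positive integers for $\delta$ small enough.

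I would organize the write-up as: (i) fix $L$ from the bias bound and record $\Delta t_L$; (ii) plug in the optimal $N_l$ and bound the variance and the cost by the two displayed terms; (iii) estimate $\sum_l \sqrt{V_lC_l}$ via the geometric sum, splitting into the three regimes; (iv) absorb the ceiling/lower-order terms using $2\alpha\ge\gamma$. The main obstacle — really the only delicate point — is the bookkeeping around the ceiling functions in $N_l$: one must show that replacing $N_l$ by $\lceil N_l\rceil$ changes the cost only by the additive $\sum_l C_l$ term and does not spoil the variance bound, and simultaneously that $N_l\ge 1$ so the levels are actually populated; this is where the constant $c_4$ and the restriction $\delta<e^{-1}$ get used. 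Everything else is the standard geometric-series casework, so I would keep those computations terse and focus the exposition on the regime split and the role of the assumption $2\alpha\ge\gamma$.
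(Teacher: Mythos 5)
Your proposal is correct and follows essentially the same route as the paper's proof in Appendix B.1: choose $L$ from the bias bound so that $c_1(\Delta t_L)^\alpha \le \delta/\sqrt{2}$, take the KKT-optimal $N_l = \lceil \mu\sqrt{V_l/C_l}\rceil$ with $\mu = 2\delta^{-2}\sum_l\sqrt{C_lV_l}$, bound the cost by $2\delta^{-2}\bigl(\sum_l\sqrt{C_lV_l}\bigr)^2 + \sum_l C_l$, and split the geometric sum into the three regimes, absorbing $\sum_l C_l \le c\,\delta^{-\gamma/\alpha}\le c\,\delta^{-2}$ via $2\alpha\ge\gamma$. The only difference is cosmetic: you are more explicit about the ceiling bookkeeping, which the paper handles implicitly in the inequality $N_lC_l \le \mu\sqrt{C_lV_l}+C_l$.
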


%For adaptive MC method in \cref{alg:mc}, we denote $\hat{P}_l$ to be the Monte Carlo estimate on level $l$ with time step $\Delta t$.
\begin{theorem}[MC Computational Cost]
Assume that the MC estimators $\hat{P}_l^{(l)}$ ($l\in\mathbb{N}$) has the following properties:
\begin{enumerate}[{\textbf{A}}1.]\setcounter{enumi}{3}
  \item\label{Assum:2-1} $\E\left[P^{(l)}-P\right] \leq c_1 (\Delta t_l)^{\alpha}$,
  \item\label{Assum:2-2} $\Var\left[\hat{P}_l^{(l)}\right] = N_l^{-1}\mathsf{V}_l \leq c_2 N_l^{-1} (\Delta t_l)^{\beta_0}$,
  \item\label{Assum:2-3} $\mathfrak{C}\left[\hat{P}_l^{(l)}\right]=N_l\mathfrak{C}\left[P_{l}^{(l)}\right] \leq c_3 N_l (\Delta t_l)^{-\gamma}$,
\end{enumerate}
where $2\alpha\geq \gamma$, $\beta_0$, $\gamma$, $c_1$, $c_2$, $c_3$ are positive constants. There exist $L\in\mathbb{N}^+$, $N_l\in \mathbb{N}^+$, $l=0, ..., L$, and a constant $c_4>0$ such that for any $\delta < e^{-1}$, the MC estimator $\hat{P}_L^{(L)}$ has the following $\mse$ bound
\begin{equation}
  \mse \equiv \E\left[\left(\hat{P}_L^{(L)}-\E\left[P\right]\right)^2\right] < \delta^2,
  \label{eqn:mse}
\end{equation}
and the computational cost $\mathfrak{C}^*$ is bounded by
\begin{equation}
  \begin{aligned}
  \mathfrak{C}^* &\leq \left\{ \begin{aligned}  
  & c_4\delta^{-2}, & \beta_0 > \gamma, \\
  & c_4\delta^{-2}\log \delta^{-1}, & \beta_0 = \gamma,\\
  & c_4\delta^{-2-(\gamma-\beta_0)/\alpha}, & 0 < \beta_0 < \gamma.
  \end{aligned}\right.
  \end{aligned}
\end{equation}
\label{thm:costmc}
\end{theorem}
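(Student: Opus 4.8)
The plan is to follow the classical single--level argument of Giles \cite{giles2008multilevel}, which is a simpler counterpart of \cref{thm:costmlmc}: I would exhibit an explicit finest level $L$ together with sample counts $N_l$, bound the squared bias and the variance of $\hat P_L^{(L)}$ each by $\delta^2/2$, and then sum the per--level costs. Throughout I would use the geometric refinement $\Delta t_l=\Delta t_0\Gamma^{-l}$. Since the statement is purely existential, I would bypass the adaptive stopping rule \eqref{eqn:criteriamc} and take $L$ to be the smallest index with $(\Delta t_L)^{\alpha}\le\delta/(\sqrt2\,c_1)$, so that assumption \textbf{A}4 forces $|\E[P^{(L)}-P]|^2\le\delta^2/2$. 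Taking logarithms gives $L=O(\log\delta^{-1})$, and minimality of $L$ yields the two--sided estimate $\Delta t_L\asymp\delta^{1/\alpha}$ with constants depending only on $\alpha,\Gamma,c_1,\Delta t_0$ (and, for $\delta$ small as ensured by $\delta<e^{-1}$ up to adjusting constants, $L\ge1$).

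For the variance I would mimic \eqref{eqn:detNmc} and set $N_l=\lceil 2c_2\,\delta^{-2}(\Delta t_l)^{\beta_0}\rceil$ for $l=0,\dots,L$, so each $N_l\in\mathbb N^{+}$; assumption \textbf{A}5 then gives $\Var[\hat P_L^{(L)}]=N_L^{-1}\mathsf V_L\le N_L^{-1}c_2(\Delta t_L)^{\beta_0}\le\delta^2/2$, and the bias--variance decomposition delivers $\mse\le\delta^2$, the strict inequality in \eqref{eqn:mse} being recovered by a harmless margin in the $1/2$--$1/2$ splitting. For the cost, assumption \textbf{A}6 gives $\mathfrak C^{*}=\sum_{l=0}^{L}N_lC_l\le 2c_2c_3\,\delta^{-2}\sum_{l=0}^{L}(\Delta t_l)^{\beta_0-\gamma}+c_3\sum_{l=0}^{L}(\Delta t_l)^{-\gamma}$, where the second term collects the $+1$ coming from the ceilings. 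I would bound the second sum as a geometric series dominated by its last term, $\asymp(\Delta t_L)^{-\gamma}\asymp\delta^{-\gamma/\alpha}=O(\delta^{-2})$, the last equality using $\gamma\le2\alpha$ --- this is the only point where the hypothesis $2\alpha\ge\gamma$ is needed. For the first sum I would write $\sum_{l=0}^{L}(\Delta t_l)^{\beta_0-\gamma}=(\Delta t_0)^{\beta_0-\gamma}\sum_{l=0}^{L}\Gamma^{l(\gamma-\beta_0)}$ and split into three cases: a convergent geometric series, hence $O(1)$, when $\beta_0>\gamma$; the value $L+1=O(\log\delta^{-1})$ when $\beta_0=\gamma$; and a geometric series dominated by its last term $\asymp(\Delta t_L)^{-(\gamma-\beta_0)}\asymp\delta^{-(\gamma-\beta_0)/\alpha}$ when $\beta_0<\gamma$. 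Multiplying by $\delta^{-2}$ reproduces the three stated bounds, the leftover $O(\delta^{-2})$ being absorbed in every case.

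The step I expect to require the most care is reconciling the deterministic $L$ used above with the random $L$ actually produced by \cref{alg:mc}: a fully rigorous analysis of the algorithm would need a concentration estimate showing that the empirical test \eqref{eqn:criteriamc}, based on $|\hat P_l^{(l)}-\hat P_{l-1}^{(l-1)}|$, triggers with high probability at a level of the same order as the deterministic $L$, and that the extra sampling this entails does not change the order of the cost. Because \cref{thm:costmc} only asserts the \emph{existence} of suitable $(L,N_l)$, this subtlety can be sidestepped here, exactly as in the companion proof of \cref{thm:costmlmc}; but it is the one genuinely non--mechanical ingredient, and I would import the corresponding lemma from that analysis rather than redo it.
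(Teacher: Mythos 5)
Your proposal is correct and follows essentially the same route as the paper's proof in Appendix B.2: the same explicit choice of $L$ from the bias condition, the same per-level sample counts $N_l=\lceil 2\delta^{-2}\mathsf V_l\rceil$ (you merely substitute the upper bound $c_2(\Delta t_l)^{\beta_0}$ for $\mathsf V_l$), the same splitting of the cost into $2\delta^{-2}\sum_l \mathfrak C\bigl[P_l^{(l)}\bigr]\mathsf V_l$ plus the ceiling remainder $\sum_l \mathfrak C\bigl[P_l^{(l)}\bigr]$, and the same three-case geometric-series analysis with $2\alpha\ge\gamma$ invoked only to absorb $\delta^{-\gamma/\alpha}$ into $O(\delta^{-2})$. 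Your closing remark about the adaptive stopping rule is a fair observation, but, as you note, the theorem is purely existential and the paper likewise sidesteps it.
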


Comparing the results of \cref{thm:costmlmc} and \cref{thm:costmc}, we can see that when $0<\beta_0<\beta<\gamma$, the MLMC method outperforms the MC method.

%------------------------------------------
\section{ Comparing MLMC and MC for the scalar advection equation with random velocity}
\label{sec:comparison:scalar}
%------------------------------------------

In sections \S~\ref{sec:comparison:scalar}, \S~\ref{sec:comparison:eulerandshallow} and \S~\ref{sec:comparison:randomchoice}, we use four examples to compare MLMC and MC methods for both linear and nonlinear problems. In this section, we consider the first example, the scalar advection equation with a time dependent random velocity. For this example, we analyze the variances of the MC solutions and MLMC corrections, and provide the complexity estimates of MLMC and MC methods in \cref{thm:scalar-cost} for two cases: 1) the number of random parameters is finite, 2) the random parameter is modeled as white noise. In the first case, MLMC does outperform MC, while in the second case, MLMC has the same order of computational cost $\delta^{-3}$ as MC. We then use numerical experiments to justify those theoretical results.

% %------------------------------------------
% \subsection{Scalar advection equation with random velocity}
% \label{sec:comparison:scalar}
% %------------------------------------------

We first consider the scalar advection equation
\begin{equation}\label{eqn:scacon}
    u_t + a(t,\omega)u_x = 0,
\end{equation}
with $x\in[-1,1]$, $t \in [0,T]$, initial condition $u(0,x) = u_0(x)$, and periodic boundary condition $u(-1,t) = u(1,t)$. 

We assume that the random variable is of the form $\omega = (\omega_1, \dots, \omega_K)$, such that $K$ is the random dimension. The random time dependent velocity $a=a(t, \omega)$ can be modeled as piecewise constants over $K$ equal length time intervals
\begin{equation}
    a(t, \omega) = \bar{a} + \omega_k \quad  \text{ for } \quad \frac{k-1}{K}T \leq t < \frac{k}{K}T, \quad k=1,\dots,K,
\end{equation}
where $\bar{a}=1$, $\omega_k\sim\mathcal{U}(-1,1)$. The corresponding solution is denoted as $u(x, t, \omega)$.

Let $a(t, \omega(s))$ be a sample of $a(t, \omega)$. We denote $v^{n}_{i}(s)$ as the numerical solution at $(x_i, t_n)$, with the random variable $\omega(s)$, $x_i=-1+i\Delta x_l$, $t_n=n\Delta t_l$, where $\Delta t_l$ is the time step and $\Delta x_l$ is the mesh size, with $\frac{\Delta t_l}{\Delta x_l} = \kappa \leq 1$ by the CFL condition. In the following, when no confusion arises, we also use notations $v^{n}_{i}$ or $v^n$ for simplicity. We denote $X_l:=\frac{2}{\Delta x_l}$ and $M_l:=\frac{T}{\Delta t_l}$ as the spatial and temporal degrees of freedom.

Equation \eqref{eqn:scacon} can be discretized by the first order upwind scheme
\begin{equation}
    \frac{v_i^{n+1}(s) - v_i^n(s)}{\Delta t} + a(t_n,\omega{(s)}) \frac{v_i^{n}(s) - v_{i-1}^n(s)}{\Delta x} = 0, \quad a>0.
    \label{eqn:upwind}
\end{equation}

%------------------------------------------
\subsection{Analysis}
\label{sec:comparison:scalar:analysis}
%------------------------------------------

We now carry out an analysis for this simple model. 
Denoting $u(\cdot, t,\omega(s))$ as the exact solution of \eqref{eqn:scacon} using the random variable $\omega(s)$, which can be obtained by method of characteristics. At the final time $T$, we have the following expression
\begin{equation}
    u\left(x, T,\omega{(s)}\right) = u\left(x-\left(\bar{a}+\omega_K{(s)}\right) \frac{T}{K}, \frac{K-1}{K}T \right) = \dots = u_0\left(x-\bar{a}T-\frac{\sum_{k=1}^K\omega_k{(s)}}{K}T\right).
\end{equation}

\begin{proposition}
\label{prop:propertyofvar}
Denote $P = u\left(\cdot, T,\omega(s)\right)$ as the exact solution and $P^{(l)}$ as the numerical solution using time step $\Delta t_l$ and mesh size $\Delta x_l$ at final time $T$. The variances of $P$, $P^{(l)}$ satisfy the following properties
\begin{enumerate}[{\textbf{P}}1.]
    \item\label{property:varP} $\Var\left[P\right] := \sum_{i} \Var\left[P(x_i)\right] \Delta x_l  = O(K^{-1})$,
    \item\label{property:varPlP} $\Var\left[P^{(l)} - P\right] := \sum_{i} \Var\left[(P^{(l)} - P)(x_i)\right] \Delta x_l  = O((\Delta t_l)^{2})$,
    \item\label{property:varPl} $\Var\left[P^{(L)}\right] := \sum_{i} \Var\left[P^{(l)}(x_i)\right] \Delta x_l  = O(K^{-1})$.
\end{enumerate}
\end{proposition}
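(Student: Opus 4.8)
The plan is to derive all three identities from a single genuine estimate --- the first-order $L^2$ convergence of the upwind scheme --- together with elementary second-moment bookkeeping. Throughout I would write $S:=\frac1K\sum_{k=1}^K\omega_k$, so that the characteristics formula displayed above reads $P(x)=u_0\bigl(x-\bar aT-ST\bigr)$; since the $\omega_k$ are i.i.d.\ $\mathcal U(-1,1)$ this gives $\E[S]=0$ and $\Var[S]=\frac1{3K}$. I would use two soft facts repeatedly: $\Var[X]\le\E[(X-c)^2]$ for any random variable $X$ and any constant $c$; and $\sqrt{\Var[\,\cdot\,]}$, with the summed definition in the statement, is a seminorm --- it is the $L^2$-norm of the centered field $Q-\E[Q]$ over the product of the spatial grid and the sample space --- so it obeys the triangle inequality.

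For \textbf{P}\ref{property:varP} I would take $c(x)=u_0(x-\bar aT)$, the profile transported by the mean velocity, and use that $u_0$ is Lipschitz with constant $L$:
\[
\Var[P]\le\sum_i\E\bigl[\,\bigl|u_0(x_i-\bar aT-ST)-u_0(x_i-\bar aT)\bigr|^2\,\bigr]\Delta x_l\le L^2T^2\,\E[S^2]\sum_i\Delta x_l=2L^2T^2\,\Var[S]=O(K^{-1}).
\]
For \textbf{P}\ref{property:varPlP} I would bound the pointwise variances by second moments and interchange sum and expectation,
\[
\Var[P^{(l)}-P]\le\E\Bigl[\sum_i\bigl|(P^{(l)}-P)(x_i)\bigr|^2\Delta x_l\Bigr]=\E\bigl[\,\|P^{(l)}-P\|_{L^2_{\Delta x_l}}^2\,\bigr],
\]
with $\|w\|_{L^2_{\Delta x_l}}^2:=\sum_i|w(x_i)|^2\Delta x_l$; this is the mean-square discrete $L^2$ error of scheme \eqref{eqn:upwind}. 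For a fixed sample $\omega$, on each of the $K$ time subintervals the scheme is the constant-coefficient first-order upwind method with velocity $a_k=\bar a+\omega_k$, $|a_k|\le\bar a+1$; under the CFL condition it is $\ell^2$-stable and first-order accurate against the exact solution (which, being merely a shift of $u_0$, stays as smooth as $u_0$). Accumulating the $O(\Delta x_l)$ errors generated on the $K$ subintervals, whose lengths sum to $T$, and using stability gives $\|P^{(l)}-P\|_{L^2_{\Delta x_l}}\le C(T,u_0)\,\Delta x_l$ uniformly in $\omega$; since $\Delta t_l=\kappa\Delta x_l$, taking expectations yields $\Var[P^{(l)}-P]=O((\Delta t_l)^2)$.

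Finally I would obtain \textbf{P}\ref{property:varPl} from the triangle inequality for $\sqrt{\Var[\,\cdot\,]}$ applied to $P^{(l)}=P+(P^{(l)}-P)$:
\[
\sqrt{\Var[P^{(l)}]}\le\sqrt{\Var[P]}+\sqrt{\Var[P^{(l)}-P]}=O(K^{-1/2})+O(\Delta t_l),
\]
so $\Var[P^{(l)}]=O(K^{-1})+O((\Delta t_l)^2)$; in either regime considered --- $K=O(1)$ fixed, or $K=M_l=T/\Delta t_l$ so that $(\Delta t_l)^2\ll\Delta t_l\simeq K^{-1}$ --- the first term dominates, giving $O(K^{-1})$.

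The hard part will be \textbf{P}\ref{property:varPlP}: one must check that the classical first-order $L^2$ error bound for upwind survives the concatenation of the $K$ subintervals carrying different (but uniformly bounded) velocities, with a constant that does not grow with $K$. This is where the smoothness of $u_0$ (so the exact solution has bounded $H^2$-norm uniformly in $t$ and in $\omega$) and the $\ell^2$-stability of upwind under the CFL condition are genuinely used; \textbf{P}\ref{property:varP} and \textbf{P}\ref{property:varPl} are then soft consequences.
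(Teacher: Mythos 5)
Your proof is correct and follows essentially the same route as the paper: \textbf{P}\ref{property:varP} by centering $P$ at the mean-transported profile $u_0(x-\bar aT)$ and using $\Var[K^{-1}\sum_k\omega_k]=\Var[\omega_1]/K$; \textbf{P}\ref{property:varPlP} by bounding the variance by the mean-square discrete $L^2$ error and invoking the $O(\Delta t_l^2)$ local truncation error plus $\ell^2$-stability of upwind, with constants uniform in $\omega$ since $|a|\le 2$ and the exact solution is a shift of $u_0$; and \textbf{P}\ref{property:varPl} by the triangle inequality for the variance seminorm. No gaps worth flagging.
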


\begin{proof}
\begin{enumerate}[(a)]
    \item Proof of \textit{\textbf{P}}\ref{property:varP}. Defining $\bar{u}=u_0(x-\bar{a}T)$, we calculate $P - \bar{u}$ as follows
\begin{equation}
    \left|(P - \bar{u})(x_i)\right|^2 = \left|u(x_{i}, T, \omega(s))-\bar{u}(x_i)\right|^2 \leq 
    \left\|u_0^\prime(\cdot)\right\|_{\infty}^2 T^2 \left|\frac{\sum_{k=1}^K\omega_k^{(s)}}{K}\right|^2,
\end{equation}
and 
\begin{equation}
    \E\left[\left|\frac{\sum_{k=1}^K\omega_k^{(s)}}{K}\right|^2\right] = \Var\left[\frac{\sum_{k=1}^K\omega_k^{(s)}}{K}\right] = \frac{\Var\left[\omega_1\right]}{K}.
\end{equation}

Property \textit{\textbf{P}}\ref{property:varP} then follows by
\begin{equation}
\Var\left[P\right] = \sum_{i}\Var\left[\left(P-\bar{u}\right)(x_i)\right]\Delta x_l \leq \sum_{i} \E\left[\left(P-\bar{u}\right)^2(x_i) \right] \Delta x_l = O(K^{-1}).
\end{equation}

\item Proof of \textit{\textbf{P}}\ref{property:varPlP}. We define the following intermediate variable with respect to the upwind scheme \eqref{eqn:upwind},
$$\tilde{u}(x_i, t_{n+1}, \omega(s)) : = u(x_i, t_{n}, \omega{(s)}) - a(t_n, \omega(s)) \frac{\Delta t_l}{\Delta x_l}(u(x_i, t_{n}, \omega{(s)}) - u(x_{i-1}, t_{n}, \omega{(s)})).
$$
The local truncation error of the upwind scheme is
\begin{equation}\label{eqn:scalaranaly1}
\begin{aligned}
    \tau_{i}^{n} & := u(x_i, t_{n+1}, \omega{(s)}) - \tilde{u}(x_i, t_{n+1}, \omega{(s)}) \\
    &= \frac{a(t_n, \omega(s))(\Delta t_l)^2}{2}\left(a(t_n, \omega(s)) u_{xx}(x_i, t_{\eta}) - \frac{\Delta x_l}{\Delta t_l} u_{xx}(x_{\xi}, t_n)\right) \\
    &\leq \frac{\mathcal{C}_1 a(t_n, \omega(s))}{2}\left(a(t_n, \omega(s)) + \frac{\Delta x_l}{\Delta t_l}\right) (\Delta t_l)^2 \\
    &\leq \left(2+\frac{1}{\kappa}\right) \mathcal{C}_1 (\Delta t_l)^2,
\end{aligned}
\end{equation}
% \lzz{upwind convex combination, $e^{n+1}=\cdots$}
where $\mathcal{C}_1 = \sup|u_{xx}|$. Then 
$$\left\|\tau^n\right\|_2 \leq \left(2+\frac{1}{\kappa}\right) \mathcal{C}_1 \sqrt{X_{l}} (\Delta t_l)^2.$$ 

The stability of the upwind scheme is obvious since \begin{equation}\label{eqn:scalarstability}
    \left\| v^{n+1} \right\|_2^2 = \sum_{i} \left|v_{i}^{n+1}\right|^2 \leq \sum_{i} (1-\kappa a(t_n, \omega(s))) \left|v_{i}^{n}\right|^2 + \kappa a(t_n, \omega(s)) \left|v_{i-1}^{n}\right|^2 = \left\| v^{n} \right\|_2^2,
\end{equation}
where the CFL condition $\kappa\le 1$ is used. The error $P^{(l)}-P$ can be bounded by
\begin{equation}\label{eqn:scalaranaly2}
\begin{aligned}
    \left\|P^{(l)}-P\right\|_2 &= \left\| v^{M_l} - u\left(\cdot, t_{M_l}, \omega(s)\right) \right\|_2 \\
    &\leq \left\| v^{M_l} - \tilde{u}\left(\cdot, t_{M_l}, \omega(s)\right) \right\|_2 + \left\| u\left(\cdot, t_{M_l}, \omega(s)\right) - \tilde{u}\left(\cdot, t_{M_l}, \omega{(s)}\right) \right\|_2 \\
    (\text{stability}) &\leq  \left\| v^{M_l-1} - u\left(\cdot, t_{M_l-1}, \omega(s)\right) \right\|_2 + \left\|\tau^{M_l-1}\right\|_2 \\
    &\leq \cdots \\
    &\leq 0 + \sum_{j=0}^{M_l-1} \left\|\tau^j\right\|_2 \\
    &\leq \left(2+\frac{1}{\kappa}\right) \mathcal{C}_1 T \sqrt{X_l}\Delta t_l.
\end{aligned}
\end{equation}
Property  \textit{\textbf{P}}\ref{property:varPlP} follows by
\begin{equation}\label{eqn:scalaranaly3}
    \Var\left[P^{(l)}-P\right] \leq \sum_{i} \E\left[\left(P^{(l)}-P\right)^2(x_i)\right] \Delta x_l = \E\left[\left\|P^{(l)}-P\right\|_2^2\right] \Delta x_l = O((\Delta t_l)^2).
\end{equation}

\item Proof of \textit{\textbf{P}}\ref{property:varPl}. Property  \textit{\textbf{P}}\ref{property:varPl} follows by the assumption that $\Delta t_l = O(K^{-1})$, and the inequality 
$$
\left( \sqrt{\Var\left[P^{(l)}-P\right]} - \sqrt{\Var\left[P\right]} \right)^2 \leq \Var\left[P^{(l)}\right] \leq \left(\sqrt{\Var\left[P^{(l)}-P\right]} + \sqrt{\Var\left[P\right]}  \right)^2.
$$
\end{enumerate}

\end{proof}

We compute MLMC corrections, which are the difference of solutions $P^{(l)}_l$ and $P^{(l-1)}_l$, the numerical solutions at final time $T$ of \eqref{eqn:scacon} on the $l$th level, with time step $\Delta t_l$, $\Delta t_{l-1}=\Gamma\Delta t_{l}$ respectively. We have the following theorem for the computational costs of MLMC and MC for the scalar random advection equation. 

\begin{theorem}
\label{thm:scalar-cost}
We consider two cases,
  \begin{itemize}
      \item
      Case I: When the number of random variables $K$ is fixed, and $M_{0}\geq K$, we have $\beta_0 = 0$, $\beta = 2$, i.e.
      \begin{equation}
          \Var\left[P_l^{(l)}\right] = O(1), \quad  \Var\left[P_l^{(l)}-P_{l}^{(l-1)}\right] = O((\Delta t_l)^2).
          \label{eqn:example1case1}
      \end{equation}
      Given the accuracy threshold $\delta>0$, the cost of MLMC is $O\left(\delta^{-2}\left(\log\delta^{-1}\right)^2\right)$ and the cost of MC is $O\left(\delta^{-4}\right)$.
      
      \item
      Case II: The random velocity is modeled as white noise, namely the number of random variables depends on $l$, $K_l = \frac{T}{\Delta t_l}= M_l$ for each level $l$. We have $\beta_0 = \beta = 1$, i.e.
      \begin{equation}
          \Var\left[P_l^{(l)}\right] = O(\Delta t_l), \quad  \Var\left[P_l^{(l)}-P_{l}^{(l-1)}\right] = O(\Delta t_l).
      \end{equation}
      Given the accuracy threshold $\delta>0$, the computational costs of MLMC and MC are both $O\left(\delta^{-3}\right)$.
  \end{itemize}
\end{theorem}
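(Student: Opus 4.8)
The plan is to extract the four exponents $(\alpha,\beta_0,\beta,\gamma)$ for the upwind discretization \eqref{eqn:upwind} in each of the two cases, check the structural hypothesis $2\alpha\ge\gamma$, and then simply read off the costs from \cref{thm:costmlmc} and \cref{thm:costmc}; all the variance bounds will be supplied by \cref{prop:propertyofvar}. Two of the exponents are case-independent. The upwind scheme \eqref{eqn:upwind} is first order for smooth solutions, so $\alpha=1$ (the standing convention of the paper, consistent with the error bound used inside the proof of \cref{prop:propertyofvar}). One sample on level $l$ performs an explicit update over $M_l=T/\Delta t_l$ time steps on a spatial grid of $X_l=2/\Delta x_l$ points, and the CFL relation $\Delta t_l=\kappa\Delta x_l$ gives $X_l=O(\Delta t_l^{-1})$; hence the per-sample cost is $C_l=M_lX_l=O(\Delta t_l^{-2})$, i.e.\ $\gamma=2$. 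In particular $2\alpha=2=\gamma$, so the hypothesis $2\alpha\ge\gamma$ needed by both cost theorems is satisfied.

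For Case~I, with $K$ fixed, property \textbf{P}3 of \cref{prop:propertyofvar} gives $\Var[P_l^{(l)}]=O(K^{-1})=O(1)$, so $\beta_0=0$. For $\beta$, I would use that $P_l^{(l)}$ and $P_l^{(l-1)}$ are generated from the same draw $\omega=(\omega_1,\dots,\omega_K)$, write $Y_l=(P_l^{(l)}-P)-(P_l^{(l-1)}-P)$, and bound $\Var[Y_l]\le 2\Var[P_l^{(l)}-P]+2\Var[P_l^{(l-1)}-P]$. Property \textbf{P}2 of \cref{prop:propertyofvar} controls each term, giving $O((\Delta t_l)^2)$ and $O((\Delta t_{l-1})^2)=O((\Gamma\Delta t_l)^2)=O((\Delta t_l)^2)$; here the assumption $M_0\ge K$ is essential, since it forces $M_{l-1}\ge M_0\ge K$ so that the coarser solve still resolves the $K$ piecewise-constant time intervals and \textbf{P}2 applies on level $l-1$. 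Thus $\beta=2$. Feeding $(\alpha,\beta,\gamma)=(1,2,2)$ into \cref{thm:costmlmc} (the borderline case $\beta=\gamma$) yields the MLMC cost $O(\delta^{-2}(\log\delta^{-1})^2)$. For MC the value $\beta_0=0$ lies just outside the hypotheses of \cref{thm:costmc}, so I would instead reproduce its short argument directly: the stopping rule \eqref{eqn:criteriamc} fixes the finest level by $(\Delta t_L)^\alpha\approx\delta$, hence $\Delta t_L\approx\delta$ and $C_L=O(\delta^{-2})$; since $\mathsf V_L=O(1)$, \eqref{eqn:detNmc} gives $N_L=O(\delta^{-2})$, and the total cost is $N_LC_L=O(\delta^{-4})$, matching the formal $\beta_0\to0$ limit of $\delta^{-2-(\gamma-\beta_0)/\alpha}$.

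For Case~II, now $K=K_l=M_l=T/\Delta t_l$, so property \textbf{P}3 of \cref{prop:propertyofvar} gives $\Var[P_l^{(l)}]=O(K_l^{-1})=O(\Delta t_l)$ and likewise $\Var[P_l^{(l-1)}]=O(K_{l-1}^{-1})=O(\Delta t_{l-1})=O(\Delta t_l)$; hence $\beta_0=1$ and, via $\Var[Y_l]\le 2\Var[P_l^{(l)}]+2\Var[P_l^{(l-1)}]$, also $\beta=1$. Now $(\alpha,\beta,\gamma)=(1,1,2)$ sits in the regime $0<\beta<\gamma$, so \cref{thm:costmlmc} gives the MLMC cost $\delta^{-2-(\gamma-\beta)/\alpha}=\delta^{-3}$, and $(\alpha,\beta_0,\gamma)=(1,1,2)$ in \cref{thm:costmc} gives the MC cost $\delta^{-2-(\gamma-\beta_0)/\alpha}=\delta^{-3}$; the two orders coincide, so MLMC brings no asymptotic speed-up.

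I expect the main obstacle to be the $\beta=2$ bound in Case~I: one must verify both that $P_l^{(l)}$ and $P_l^{(l-1)}$ use the same random draw (which is what upgrades the crude $O(1)$ bound behind $\beta_0$ to the $O((\Delta t_l)^2)$ bound) and that the level-$(l-1)$ solve still lies within the scope of property \textbf{P}2 of \cref{prop:propertyofvar} — precisely the role played by the hypothesis $M_0\ge K$. The only other mildly delicate point is the $\beta_0=0$ MC estimate in Case~I, which is on the boundary of \cref{thm:costmc} and so requires the short direct computation above rather than a black-box citation.
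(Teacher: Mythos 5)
Your proposal follows essentially the same route as the paper: read $\beta_0$ off Property \textbf{P}3, get $\beta=2$ in Case I by passing through the exact solution and invoking \textbf{P}2 on both levels (the paper phrases this as ``similar error analysis as in \eqref{eqn:scalaranaly1}--\eqref{eqn:scalaranaly3}''), get $\beta=1$ in Case II by bounding $\Var[P_l^{(l)}-P_l^{(l-1)}]$ by the sum of the individual variances (the paper uses $(\sqrt{\Var[P_l^{(l)}]}+\sqrt{\Var[P_l^{(l-1)}]})^2$, you use $2\Var[P_l^{(l)}]+2\Var[P_l^{(l-1)}]$ --- equivalent up to constants), and then feed $(\alpha,\beta_0,\beta,\gamma)$ into \cref{thm:costmlmc} and \cref{thm:costmc}. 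Your explicit handling of the boundary case $\beta_0=0$ in Case I, where the hypotheses of \cref{thm:costmc} formally require $\beta_0>0$, is actually more careful than the paper, which simply cites the theorem.

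There is one genuine omission, in Case II. You apply Property \textbf{P}3 to $P_l^{(l-1)}$ to conclude $\Var[P_l^{(l-1)}]=O(K_{l-1}^{-1})=O(\Delta t_l)$, but $P_l^{(l-1)}$ is not an independently drawn level-$(l-1)$ sample: it must be computed from coarse random variables that are \emph{constructed out of} the $K_l$ fine-level white-noise draws, and \textbf{P}3 only applies if those constructed variables are again i.i.d.\ $\mathcal{U}(-1,1)$. Moreover, the telescoping identity underlying MLMC requires $\E[P_l^{(l-1)}]=\E[P^{(l-1)}]$, which again hinges on the coarse variables having the correct marginal law. The paper supplies exactly this ingredient via the max-coupling \eqref{eqn:combinetrick}, which produces coarse variables that are both correctly distributed and correlated with the fine ones; it also notes that the Case I argument \eqref{eqn:scavar2} fails here precisely because the two levels no longer share the same velocity realization. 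Your variance bound itself is coupling-independent and therefore survives, but as written the step ``\textbf{P}3 gives $\Var[P_l^{(l-1)}]=O(K_{l-1}^{-1})$'' is unjustified until you specify the construction of the coarse sample and verify its distribution. Adding that one sentence closes the gap.
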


\begin{proof}
  The values of $\beta_0$ can be obtained directly from Property \textit{\textbf{P}}\ref{property:varPl}. In the following, we will explain how random variables are used in both cases and evaluate $\beta$.
  \begin{itemize}
    \item Proof of Case I.
    We first generate $K$ random variables for one sample on level $l$. Since $K$ is fixed, we use the same random variables $\left\{\omega_k(s)\right\}_{k=1}^{K}$ to calculate $P_l^{(l)}(s)$ and $P_{l}^{(l-1)}(s)$. Therefore similar error analysis as in \eqref{eqn:scalaranaly1}, \eqref{eqn:scalaranaly2}, \eqref{eqn:scalaranaly3} leads to the following estimate
    \begin{equation}\label{eqn:scavar2}
        \Var\left[P_l^{(l)}-P_{l}^{(l-1)}\right] \leq \E\left[ \left\| P_l^{(l)}-P_{l}^{(l-1)} \right\|_2^2\right] = O((\Delta t_l)^2).
    \end{equation}
    Thus Property \textit{\textbf{A}}\ref{Assum:1-2} holds with $\beta = 2$. The computational costs of MLMC and MC follow by the application of \cref{thm:costmlmc}, \cref{thm:costmc}, and the fact that $\alpha=1$, $\gamma=2$, $\beta_0=0$, $\beta=2$.
    
    In this case, $\left\{\omega_k(s)\right\}_{k=1}^{K}$ can be directly used to calculate $P_{l}^{(l-1)}$, and the numerical implementation of \cref{alg:mlmc} is straightforward.
    
    \item Proof of Case II.
    We need to generate $K_l = T / \Delta t_l$ random variables for one sample on level $l$, denoted by $\left\{\omega_k\right\}_{i=1}^{K_l}$. 
    For example, let $a$ be a uniform white noise
    \begin{equation}
        a(t) = \bar{a} + \omega(t), \quad \omega(t) \sim \mathcal{U}(-1,1),
    \end{equation}
    with correlation $B(t) = \Var\left[\omega\right] \delta(t) = \frac{1}{3} \delta(t)$. At each time $t_{n-1}$, $a(t_{n-1}) = \bar{a} + \omega_{n}$, $n = 1,\dots,K_l$. The computation of $P^{(l)}_l$ is also straightforward, while the computation of $P^{(l-1)}_l$ is a little tricky. To compute $P^{(l-1)}_l$, we need to construct $K_{l-1}$ random variables from the $K_{l}$ random variables, such that
    \begin{enumerate}
        \item 
        They are strongly correlated with $\left\{\omega_k\right\}_{k=1}^{K_l}$, so that $\Var\left[P^{(l)}_l - P^{(l-1)}_l\right]$ is kept small; %Generally we want to construct them using all of $\omega_1, \dots, \omega_{K_l}$.
        \item 
        The distribution of the coarse random variables is the same as its finer counterpart, so that $\E\left[P^{(l-1)}_l\right] = \E\left[P^{(l-1)}_{l-1}\right]$ still holds.
    \end{enumerate}
    
To that end, we use the following trick \cite{lovbak2021multilevel} to construct those coarse random variables
    \begin{equation}\label{eqn:combinetrick}
        \frac{\tilde{a}(t_{(n-1)\Gamma})}{2} = \left(\max_{1 \leq j \leq \Gamma}\left\{\frac{a(t_{ n\Gamma- j})}{2}\right\}\right)^\Gamma \sim \mathcal{U}(0,1), \quad n = 1,\dots, K_{l-1} (= K_l/\Gamma).
    \end{equation}
    
The inequality \eqref{eqn:scavar2} does not work since the random velocities $\left\{\tilde{a}(t_{n-1})\right\}_{n=1}^{K_{l-1}}$, $\left\{a(t_{n-1})\right\}_{n=1}^{K_l}$ are not the same now, although strongly correlated. Nevertheless, the Cauchy-Schwarz inequality implies that
    \begin{equation}\label{eqn:scavar4}
        \Var\left[P_l^{(l)}-P_{l}^{(l-1)}\right] \leq \left( \sqrt{\Var\left[P_l^{(l)}\right]} + \sqrt{\Var\left[P_l^{(l-1)}\right]} \right)^2 = O(\Delta t_l).
    \end{equation}
    
The Property \textit{\textbf{A}}\ref{Assum:1-2} holds with $\beta = 1$. The computational costs of MLMC and MC follow by the application of \cref{thm:costmlmc}, \cref{thm:costmc}, and the fact that $\alpha=1$, $\gamma=2$, $\beta_0=1$, $\beta=1$.

  \end{itemize}
\end{proof}

%------------------------------------------
\subsection{Numerical results: Case I}
\label{sec:comparison:scalar:casei}
%------------------------------------------

In the numerical experiments for Case I, we choose two constants $K=1$ and $K=32$. We take
\begin{equation}\label{eqn:params}
    \kappa=\frac{\Delta t_l}{\Delta x_l} = 0.5,\quad \Gamma =  2,\quad \Delta x_0 = \frac{1}{32}, \quad N_{I} = 500,
\end{equation}
where $\Gamma$ is the refinement ratio, $N_{I}$ is the initial number of sampling, the CFL condition is satisfied since $|a|\leq 2$. We also choose the initial condition
\begin{equation}
    u_0(x) =\frac{\sin(x)+1}{2},
\end{equation}
and the periodic boundary condition.

In view of our theoretical results, we will show the computational costs of MC and MLMC vs. the accuracy level $\delta$, also their variances vs. level. 
Let $N_l$, $N_l^\ast$ represent the number of samples of MLMC and MC respectively. The computational cost of MLMC can be estimated as the total number of time steps and gird points on all levels
normalized by that on the level $0$,
\begin{equation}
    \mathfrak{C}=N_{0}+\sum_{l=1}^{L} N_{l}\left(\Gamma^{2 l}+\Gamma^{2 (l-1)}\right).
\end{equation}
Namely, the computational cost of one sample on level $0$ ($C_0$) is regarded as $1$ unit, and $C_{l+1}/C_{l} = \Gamma^2, l\geq 1$ since we refine the grid in both spatial and temporal dimensions. 

The computational cost of MC can be estimated similiarly as
\begin{equation}
    \mathfrak{C}^\ast=N_{0}^\ast+\sum_{l=1}^{L} N_{l}^\ast \Gamma^{2 l}.
\end{equation}

In \cref{fig:scamlmc1} and \cref{fig:scamlmc2}, the numerical results are shown in solid lines, while the theoretical order is plotted in dashed lines. We observe that as long as $K$ is a constant, we have $\Var\left[P_l^{(l)}\right]=O(1)$ and $\Var\left[P_l^{(l)}-P_{l}^{(l-1)}\right] = O((\Delta t_l)^2)$, i.e. $\beta_0=0$, $\beta=2$. The costs of MC and MLMC are of order $O(\delta^{-4})$ and $O(\delta^{-2})$, respectively. In this case, to achieve the same accuracy, MLMC outperforms MC by two orders of magnitude in terms of the computational cost.

\begin{enumerate}
    \item $K=1$
    \begin{figure}[H]
    %   \subfigure{
    %     \includegraphics[width=0.45\textwidth]{mlmc/scalar/cost.png}}
    %   \subfigure{
    %     \includegraphics[width=0.45\textwidth]{mlmc/scalar/var.png}}
      \subfigure{
        \includegraphics[width=0.45\textwidth]{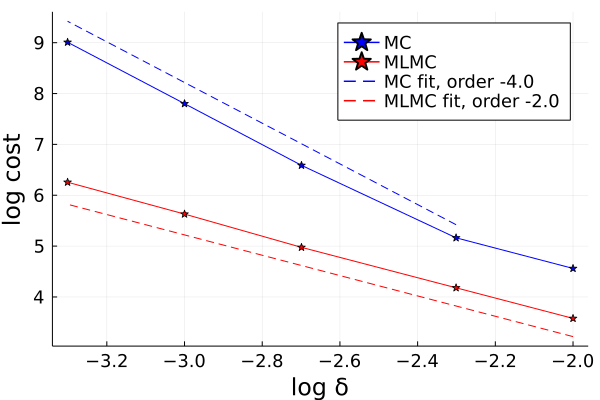}}
      \subfigure{
        \includegraphics[width=0.45\textwidth]{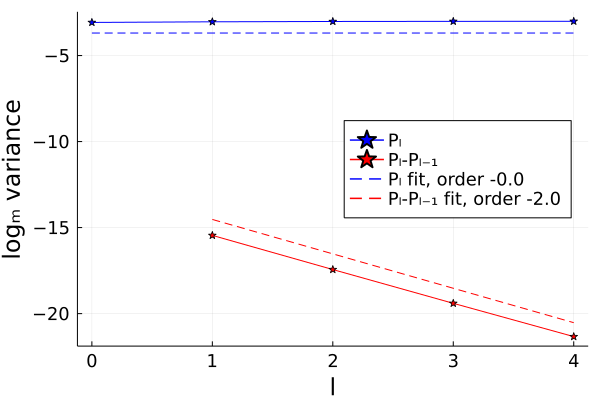}}
      \caption{Example 1: Case I, $K=1$, the computational cost w.r.t accuracy threshold $\delta$ (left) and variance on different levels (right).}\label{fig:scamlmc1}
    \end{figure}

    \item $K=32$
    \begin{figure}[H]
    %   \subfigure{
    %     \includegraphics[width=0.45\textwidth]{mlmc/scalar-1/cost.png}}
    %   \subfigure{
    %     \includegraphics[width=0.45\textwidth]{mlmc/scalar-1/var.png}}
      \subfigure{
        \includegraphics[width=0.45\textwidth]{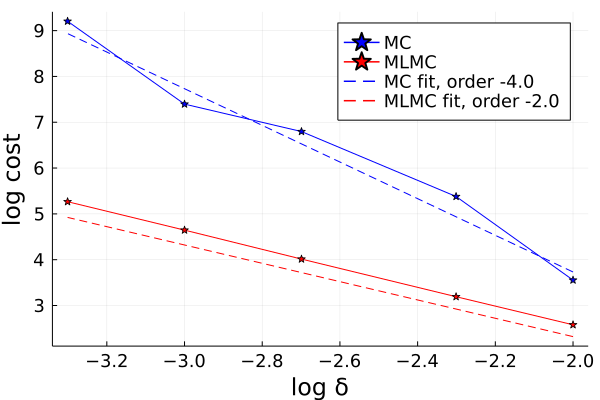}}
      \subfigure{
        \includegraphics[width=0.45\textwidth]{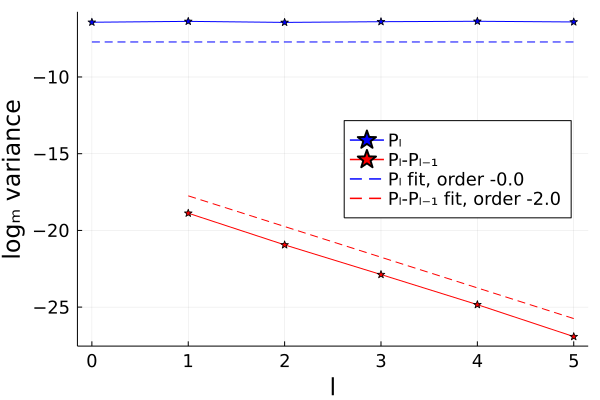}}
      \caption{Example 1: Case I, $K=32$, the computational cost w.r.t accuracy threshold $\delta$ (left) and variance on different levels (right).}\label{fig:scamlmc2}
    \end{figure}

\end{enumerate}

%------------------------------------------
\subsection{Numerical results: Case II}
\label{sec:comparison:scalar:caseii}
%------------------------------------------

In this case, we need to generate $K_l=T/\Delta t_l$ random variables $\omega_k\sim\mathcal{U}(-1,1)$ on the $l$ th level. Using \eqref{eqn:combinetrick}, we can construct $K_{l-1}$ random variables and use them to calculate $P_l^{(l-1)}$. \cref{fig:scamlmc3} shows that both $\Var\left[P_l^{(l)}\right]$ and $\Var\left[P_l^{(l)}-P_{l}^{(l-1)}\right]$ decays with order $O(\Delta t_l)$, i.e. $\beta_0=\beta=1$, which is consistent with the derivation in \eqref{eqn:scavar4}. The costs of MC and MLMC are both of the order $O(\delta^{-3})$, while MLMC appears to have a larger constant than MC. 
    
    \begin{figure}[H]
%   \subfigure{
%     \includegraphics[width=0.45\textwidth]{mlmc/scalar-3/cost.png}}
%   \subfigure{
%     \includegraphics[width=0.45\textwidth]{mlmc/scalar-3/var.png}}
      \subfigure{
        \includegraphics[width=0.45\textwidth]{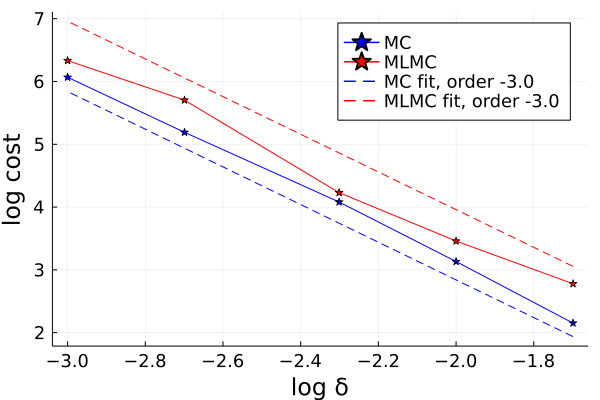}}
      \subfigure{
        \includegraphics[width=0.45\textwidth]{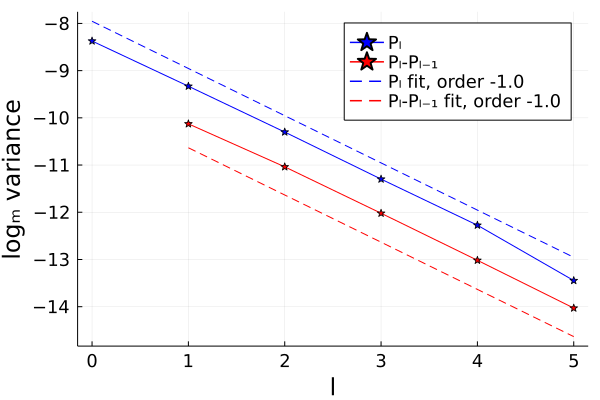}}
  \caption{Example 1: Case II, the computational cost w.r.t accuracy threshold $\delta$ (left) and variance on different levels (right).}\label{fig:scamlmc3}
\end{figure}

%------------------------------------------    
\section{Comparing MLMC and MC for the Euler equation and the shallow water equation}
\label{sec:comparison:eulerandshallow}
%------------------------------------------

In this section, we extend our study to more practical examples such as the Euler equation with a time dependent random adiabatic constant, and the shallow water equations with space dependent random tomography. The Euler equation example is qualitatively similar to the scalar advection example, albeit more complex. However, the shallow water equation example is qualitatively distinct, and has computational cost $\delta^{-4}$ if the random tomography is modeled as white noise. In \cite{mishra2012mlmcconservationlaw}, Mishra et. al. applied MLMC to Euler equations with random initial data, while we consider Euler equations with time dependent random parameters in \S~\ref{sec:comparison:euler}.  Mishra et. al. also applied MLMC to shallow water equations with uncertain topography in \cite{mishra2012mlmcshallow}, where the bottom topography is treated as a fixed number of random variables with uniform distribution. That is similar to our Case I, but we also consider the white noise Case II in  \S~\ref{sec:comparison:shallowwater}.

%{\color {red} May need to mention the result of UQ for Euler by Mishira etc and emphasize that they studied MLMC but only for the case of uncertain initial data which is different from our study here. Also for the shallow water problem.}

%------------------------------------------
\subsection{Euler equation with random adiabatic constant $\lambda(t, \omega)$}
\label{sec:comparison:euler}
%------------------------------------------

In this subsection, we consider the Euler equations as a prototypical nonlinear partial differential equations with time dependent random parameters. When the number of random variables $K$ is fixed, the analysis of Property \textit{\textbf{A}}\ref{Assum:1-2} is similar as \eqref{eqn:scavar2}; while the white noise case is now hard to analyze due to nonlinearity, we can numerically estimate $\Var\left[P_l^{(l)}\right]$, $\Var\left[P_l^{(l)}-P_{l}^{(l-1)}\right]$ and their decay orders $\beta_0$, $\beta$.
%We will show some numerical results below to illustrate the convergence of $\Var\left[P_l^{(l)}\right]$.

Given the  Euler equations with gravity
\begin{equation}\label{eqn:Euler}
    \left\{\begin{aligned} \rho_t + (\rho v)_x &= 0, \\
    (\rho v)_t + (v\rho v)_x + p_x &= -\rho\phi_x, \\ 
    E_t + [(E+p)v]_x &= -\rho v \phi_x, \end{aligned}\right.
\end{equation}
where
$$
E = \rho e + \frac{\rho}{2}v^2, \quad \phi(x) = gx.
$$
Here $\rho(t,x)$ is the density, $v(t,x)$ is the velocity, $E(t,x)$ is the total energy, $p(t,x)$ is the pressure, $e(t,x)$ is the specific internal energy, and $g$ is the gravitational constant. The adiabatic constant $\lambda$ can be modeled as a time dependent random parameter $\lambda(t, \omega)$ in the equation of state 
$$p = \rho e (\lambda-1) = A\rho^\lambda.$$  Problems with uncertain $\lambda$ were considered in, for example, \cite{tryoen2010}.

We impose the initial condition
\begin{equation}
    \rho_0(x)=2-\phi(x), \quad p_0(x)=A_{0} \rho(x)^{\lambda_0}, \quad \lambda_0 = \frac{4}{3}, \quad A_0 = 1,
\end{equation}
and the Neumann boundary condition on $x\in[0,2]$.

To be more precise, we let
\begin{equation}
    \lambda = \bar{\lambda} + \omega_k \quad  \text{ for } \quad \frac{k-1}{K}T \leq t < \frac{k}{K}T, \quad k=1,\dots,K,
\end{equation}
with $\bar{\lambda}=\frac{4}{3}$, $\omega_k\sim\mathcal{U}(-\frac{1}{3}, \frac{1}{3})$.

We use a well-balanced scheme introduced by K{\"a}ppeli and Mishra \cite{kappeli2014wellbalanced}, where the hydrostatic steady state of \eqref{eqn:Euler} is
\begin{equation}
    v\equiv 0, \quad p_x = -\rho\phi_x, \quad h+\phi = \text{const}, \quad h = e+\frac{p}{\rho}.
\end{equation}
The numerical parameters are chosen as follows
\begin{equation}\label{eqn:params2}
    \kappa = \frac{\Delta t_l}{\Delta x_l} = 0.1,\quad \Gamma =  2,\quad \Delta x_0 = 0.25,\quad T = 2, \quad N_{I} = 500.
\end{equation}

Here we show the numerical results for Case I: $\lambda$ with fixed $K$, and Case II: white noise $\lambda$. 

\subsubsection{Case I}
\cref{fig:Euler1} ($K=1$) and \cref{fig:Euler2} ($K=8$) show that $\Var\left[P_l^{(l)}\right]\approx O(1)$ and $\Var\left[P_l^{(l)}-P_{l}^{(l-1)}\right]\approx O(\left(\Delta t\right)^{1.5})$, i.e. $\beta_0=0$, $\beta=1.5$. The costs of MC and MLMC are $O(\delta^{-4})$ and $O(\delta^{-2.5})$ respectively. Both results are consistent with the analysis.

\begin{enumerate}

    \item $K=1$
    \begin{figure}[H]
    %   \subfigure{
    %     \includegraphics[width=0.45\textwidth]{mlmc/Euler/cost.png}}
    %   \subfigure{
    %     \includegraphics[width=0.45\textwidth]{mlmc/Euler/var.png}}
      \subfigure{
        \includegraphics[width=0.45\textwidth]{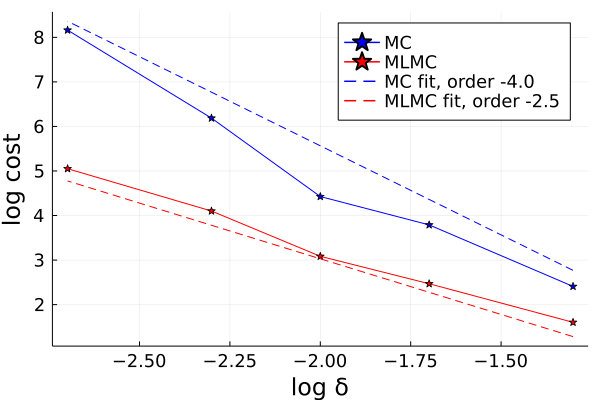}}
      \subfigure{
        \includegraphics[width=0.45\textwidth]{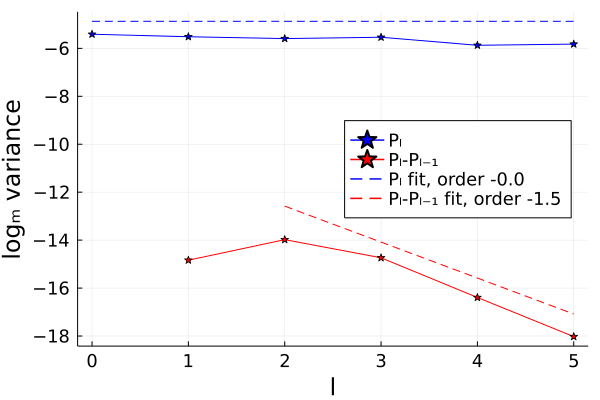}}
      \caption{Example 2: Case I, $K=1$, the computational cost w.r.t accuracy threshold $\delta$ (left) and variances on different levels (right).}\label{fig:Euler1}
    \end{figure}
    % \lzz{compute more data points}
    
    \item $K=8$
    \begin{figure}[H]
    %   \subfigure{
    %     \includegraphics[width=0.45\textwidth]{mlmc/Euler-1/cost.png}}
    %   \subfigure{
    %     \includegraphics[width=0.45\textwidth]{mlmc/Euler-1/var.png}}
      \subfigure{
        \includegraphics[width=0.45\textwidth]{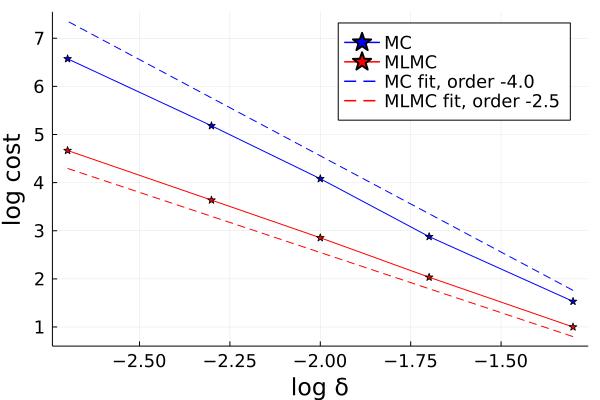}}
      \subfigure{
        \includegraphics[width=0.45\textwidth]{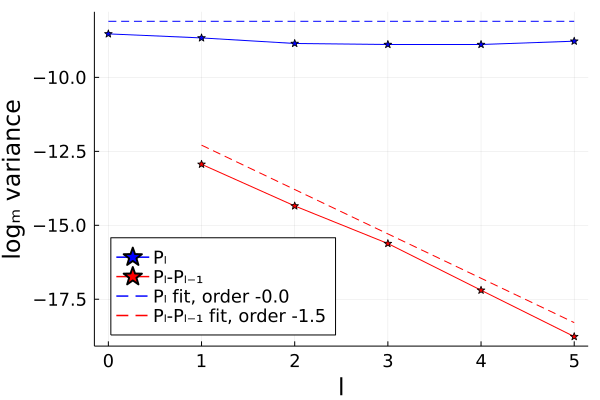}}
      \caption{Example 2: Case I, $K=8$, the computational cost w.r.t accuracy threshold $\delta$ (left) and variance on different levels (right).}\label{fig:Euler2}
    \end{figure}
    
\end{enumerate}

\subsubsection{Case II}
In this case, we generate white noise $\lambda$ with $K_l = T/\Delta t_l$ on the $l$th level, and use similar trick as \eqref{eqn:combinetrick} to construct coarser random variables.  

From \cref{fig:Euler3}, we observe that $\Var\left[P_l^{(l)}\right]$ and $\Var\left[P_l^{(l)}-P_{l}^{(l-1)}\right]$ are both $O(\Delta t_l)$, i.e. $\beta_0=\beta=1$. Comparing with the variances in \cref{fig:Euler1}, \cref{fig:Euler2}, it is evident that the temporal white noise  leads to the decay of $\Var\left[P_l^{(l)}\right]$. The costs of MC and MLMC are both $O(\delta^{-3})$.

    \begin{figure}[H]
  \subfigure{
    \includegraphics[width=0.45\textwidth]{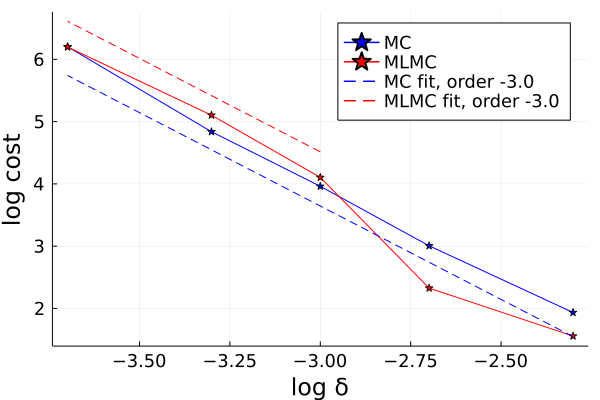}}
  \subfigure{
    \includegraphics[width=0.45\textwidth]{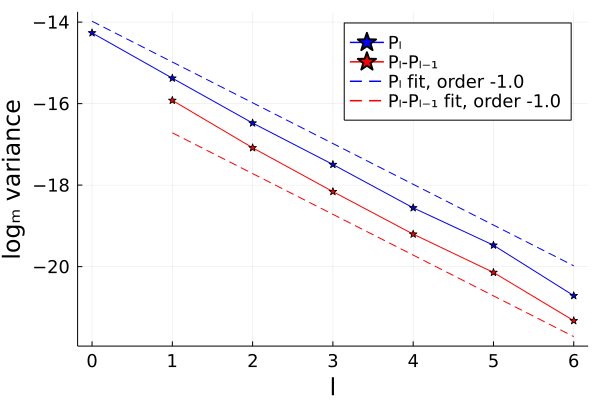}}
  \caption{Example 2: Case II, the computational cost w.r.t accuracy threshold $\delta$ (left) and variance on different levels (right).}\label{fig:Euler3}
\end{figure}

%------------------------------------------
\subsection{Shallow water equations with random tomography $B(x, \omega)$}
\label{sec:comparison:shallowwater}
%------------------------------------------

In this example, we consider space dependent random parameters instead of time dependent random parameters, using the  shallow water equations
\begin{equation}
    \left\{\begin{aligned}
    & h_t + (hu)_x = 0, \\
    & (hu)_t + \left(hu^2 + \frac{gh^2}{2}\right)_x = -ghB_x.
    \end{aligned}\right.
\end{equation}
Here $h(t,x)$ is the height, $u(t,x)$ is the mean velocity, with the gravitational acceleration constant $g$, and the bottom topography $B(x)$. In view of the measurement error \cite{liu2010}, we can introduce the following random tomography $B(x, \omega)$ defined via $K$ random variables, such that
\begin{equation}
    B(x, \omega) = 1 + \omega_k\sin(\pi x) \quad  \text{ for } \quad \frac{k-1}{K} \leq x < \frac{k}{K}, \quad k=1,\dots,K,
\end{equation}
with $\omega_k\sim\mathcal{U}(0,1)$ and the steady state
\begin{equation}
    q:=hu\equiv \widehat{q} = \text{const},\quad E:=\frac{u^2}{2}+g(h+B)\equiv \widehat{E} = \text{const}.
\end{equation}

We treat the still water case with $q \equiv 0$ ($v \equiv 0$) using the well-balanced central-upwind scheme \cite{kurganov2018finite}. The initial conditions 
\begin{equation}
    h(x,0)=5+e^{\cos(2\pi x)}, \quad u(x,0)=\sin(\cos(2\pi x)),
\end{equation}
and the periodic boundary condition on $x\in[0,1]$ are imposed.

We use the following numerical parameters
\begin{equation}\label{eqn:params1}
    \kappa = \frac{\Delta t}{\Delta x} = 0.05,\quad \Gamma =  2,\quad \Delta x_0 = \frac{1}{64}, \quad T = 0.1, \quad N_{I} = 500.
\end{equation}

Similarly as before, we have two qualitatively different cases: Case I: $B(x,\omega)$ with fixed $K$, and Case II: spatial white noise $B(x, \omega)$. 

\subsubsection{Case I}

We use $K=1$ in \cref{fig:still1} and $K=8$ in \cref{fig:still2}. We observe that $\Var\left[P_l^{(l)}\right]=O(1)$ ($\beta_0=0$), the numerical values for the decay orders of $\Var\left[P_l^{(l)}-P_{l}^{(l-1)}\right]$ in these two experiments are $\beta=2.5\, (K=1)$ and $\beta=1.0\, (K=8)$ respectively. 

The cost of MC is approximately $O(\delta^{-4})$ since $\beta_0=0$. 
In the case $K=1$, since $\beta = 2.5 > \gamma$, the cost of MLMC is approximately $O(\delta^{-2})$. While in the case $K=8$, the cost of MLMC is approximately $O(\delta^{-3})$. As we expected, MLMC is better than MC method by one or two orders of magnitude.

% However in case $K=1$, the convergence of numerical method is approximately $O(\left(\Delta t\right)^{1.5})$, i.e. $\alpha=1.5$ (due to numerical tests). Therefore, the cost of MC is approximately $O(\delta^{-2-(\gamma-\beta_0)/\alpha}) = O(\delta^{-3.3})$. On the other hand, since $\beta = 2.7 > \gamma$, the cost of MLMC is approximately $O(\delta^{-2})$. As we expected, the Multilevel Monte Carlo method is much better than the standard Monte Carlo method.
% \lzz{super converge}.
\begin{enumerate}
    \item $K=1$
    \begin{figure}[H]
    %   \subfigure{
    %     \includegraphics[width=0.45\textwidth]{mlmc/still/cost.png}}
    %   \subfigure{
    %     \includegraphics[width=0.45\textwidth]{mlmc/still/var.png}}
      \subfigure{
        \includegraphics[width=0.45\textwidth]{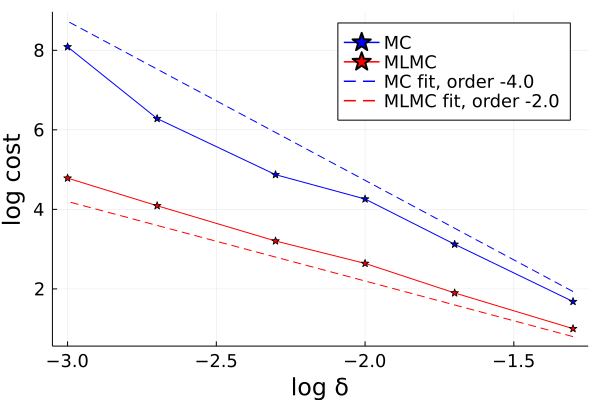}}
      \subfigure{
        \includegraphics[width=0.45\textwidth]{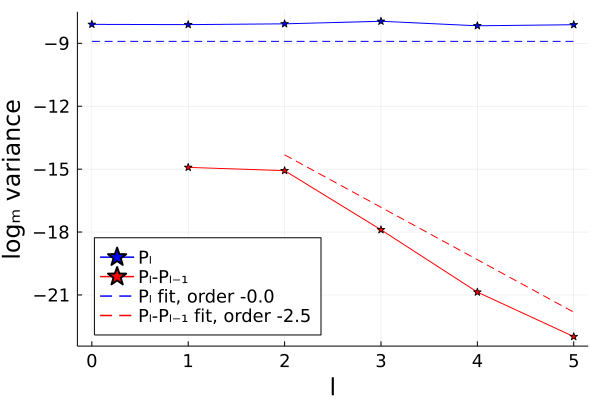}}
      \caption{Example 3: Case I, $K=1$, the computational cost w.r.t accuracy threshold $\delta$ (left) and variance on different levels (right).}\label{fig:still1}
    \end{figure}

    \item $K=8$
    \begin{figure}[H]
    %   \subfigure{
    %     \includegraphics[width=0.45\textwidth]{mlmc/still-2/cost.png}}
    %   \subfigure{
    %     \includegraphics[width=0.45\textwidth]{mlmc/still-2/var.png}}
      \subfigure{
        \includegraphics[width=0.45\textwidth]{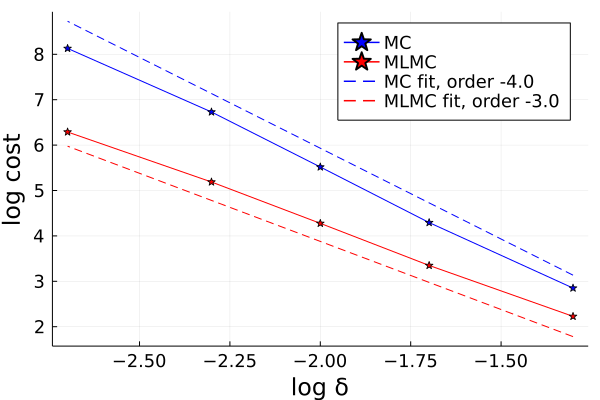}}
      \subfigure{
        \includegraphics[width=0.45\textwidth]{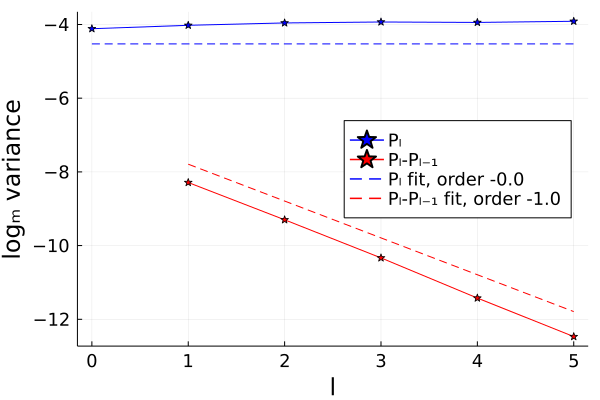}}
      \caption{Example 3: Case I, $K=8$, the computational cost w.r.t accuracy threshold $\delta$ (left) and variance on different levels (right).}\label{fig:still2}
    \end{figure}

\end{enumerate}

\subsubsection{Case II}

When $B(x,\omega)$ is modeled by spatially dependent white noise, we have $K_l=X_l$ for the $l$th level. When a sample $P_l^{(l)}$ is computed on level $l$, the value of $B(x_i, \omega)$ is generated by 
\begin{equation}
    B(x_{i-1}, \omega) = 1 + \omega_i\sin(\pi x_{i-1}), \quad \omega_i\sim\mathcal{U}(0,1), \quad i = 1,\dots, X_{l}.
\end{equation}
To calculate $P_l^{(l-1)}$, we first construct $\left\{\tilde{\omega}_{i}\right\}_{i=1}^{X_{l-1}}$ using \eqref{eqn:combinetrick},
then let
\begin{equation}
    \tilde{B}(x_{(i-1)\Gamma}, \omega) = 1 + \tilde{\omega}_{i}\sin(\pi x_{(i-1)\Gamma}), \quad i = 1,\dots,X_{l-1}(=X_l/\Gamma).
\end{equation}

\cref{fig:still3} shows that the decays of $\Var\left[P_l^{(l)}\right]$ and $\Var\left[P_l^{(l)}-P_{l}^{(l-1)}\right]$ are very slow, namely, we have $\beta\simeq 0$ and $\beta_0\simeq 0$. In this case, both MC and MLMC methods are expensive. In fact, MLMC has a higher cost compared to MC.

    \begin{figure}[H]
%   \subfigure{
%     \includegraphics[width=0.45\textwidth]{mlmc/still-3/cost.png}}
%   \subfigure{
%     \includegraphics[width=0.45\textwidth]{mlmc/still-3/var.png}}
      \subfigure{
        \includegraphics[width=0.45\textwidth]{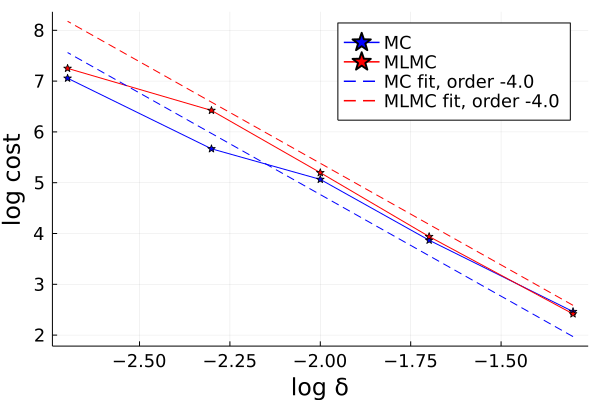}}
      \subfigure{
        \includegraphics[width=0.45\textwidth]{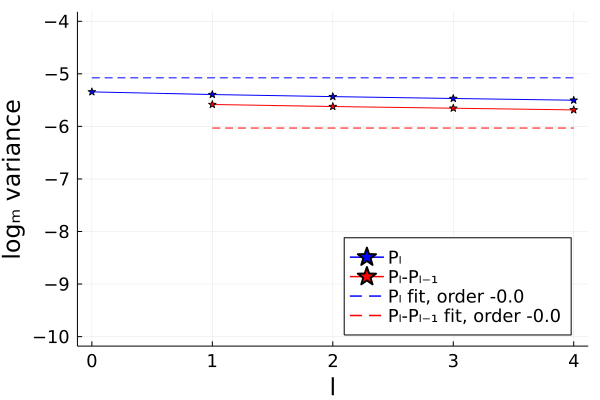}}
  \caption{Example 3: Case II, the computational cost w.r.t accuracy threshold $\delta$ (left) and variance on different levels (right).}\label{fig:still3}
\end{figure}

%------------------------------------------
\section{Comparing MLMC and MC for the linear Jin-Xin relaxation model with random choice method}
\label{sec:comparison:randomchoice}
%------------------------------------------

While the first three examples in \S~\ref{sec:comparison:scalar} and \S~\ref{sec:comparison:eulerandshallow} are hyperbolic equations with random parameters, the last example is of different nature. In this example, we apply a randomized algorithm to a multiscale deterministic equation--the Jin-Xin hyperbolic relaxation model \cite{jin1995relaxation}. This algorithm is in the spirit of the Glimm's random choice method \cite{Glimm1965solutions}, and the random variables are in both spatial and temporal dimensions. We identify similar behavior of MLMC in this example as the third example. This simple model can shed some light on the application of randomized algorithms to more complicated problems.

In particular, we deal with the following one-dimensional linear Jin-Xin relaxation model \cite{jin1995relaxation} 
\begin{equation}\label{mod:origin}
 \left\{\begin{aligned}
 \frac{\partial u}{\partial t}+\frac{\partial v}{\partial x} &=0, \\
 \frac{\partial v}{\partial t}+a \frac{\partial u}{\partial x} &=-\frac{1}{\varepsilon}(v-b u),
 \end{aligned}\right.
\end{equation}
 where $(x, t) \in [-1,1] \times [0,T]$ with the periodic boundary condition, and the initial data
$ u(x, 0)=u_{0}(x)$, $\quad v(x, 0)=v_{0}(x)$. $a>0$, $b\in\mathbb{R}$, and  $\varepsilon>0$ is the relaxation parameter. Asymptotic analysis \cite{jin1995relaxation, Natalini2001recent} shows that when $\varepsilon$ is small, the stability condition $a\geq b^2$ ensures the well-posedness of model. In the limit $\varepsilon \rightarrow 0$, the second equation of \eqref{mod:origin} relaxes to the local equilibrium  $ v = bu$,
and the first equation of \eqref{mod:origin} reduces to the scalar advection equation
$   u_t + b u_x =0.
$ 
The Jin-Xin model is a prototypical multiscale hyperbolic equation, and can be treated by the so-called asymptotic preserving (AP) method \cite{jin2000uniformly}. 

In the following, we will develop the asymptotic preserving Monte Carlo method (APMC) for the equation \eqref{mod:origin}, by combining the Glimm's random choice method \cite{Glimm1965solutions} and the asymptotic preserving principle.
A random algorithm is interesting not only for theoretical interest \cite{Glimm1965solutions} but also of practical interest \cite{BaoJin, chorin-RCM, Dim-Par}.  We will compare the APMC method with its multilevel version (MLAPMC). We will demonstrate that, since the random variables in the APMC method involves both spatial and temporal dimensions, the cost of MLAPMC is of the same order as APMC. 

\subsection{Asymptotic preserving Monte Carlo method (APMC)}
\label{sec:comparison:randomchoice:apmc}

The equation \eqref{mod:origin} is stiff in the sense that the time step of any explicit method has to satisfy $\Delta t = O(\varepsilon)$ by the stability requirement, which is prohibitively expensive as $\varepsilon\rightarrow 0$. The asymptotic preserving (AP) method  \cite{jin1999efficient} allows to choose $\Delta t$ independent of $\varepsilon$, while it still attains the correct asymptotic limit. To be precise, \eqref{mod:origin} can be split into two steps,
\begin{equation}\label{mod:sub-con}
 \text{(convection step)}\quad
\left\{\begin{aligned}
 \frac{\partial u}{\partial t}+\frac{\partial v}{\partial x}&=0, \\
 \frac{\partial v}{\partial t}+a \frac{\partial u}{\partial x}&=0,
 \end{aligned}\right.
\end{equation}
 which is a linear convection equation, and \begin{equation}\label{mod:sub-rel}
\text{(relaxation step)}\quad
\left\{\begin{aligned}
 \frac{\partial u}{\partial t}&=0, \\
 \frac{\partial v}{\partial t}&=-\frac{1}{\varepsilon}(v-b u),
 \end{aligned}\right.
\end{equation}
which is a relaxation equation with a stiff source only in the equation for $v$. 

We can construct a random choice method for the split system, 
\begin{itemize}
    \item convection step: 
     \eqref{mod:sub-con} can be transformed to a diagonal form as
    \begin{equation}\label{con:char-mod}
     \left\{\begin{aligned}
     \frac{\partial \rho}{\partial t}+\sqrt{a} \frac{\partial \rho}{\partial x}=0, \\
     \frac{\partial l}{\partial t}-\sqrt{a} \frac{\partial l}{\partial x}=0,
     \end{aligned}\right.
    \end{equation}
    by introducing the characteristic variables \begin{equation}
    \rho=\sqrt{a} u+v, \quad l=\sqrt{a} u-v.
    \label{eqn:char}
    \end{equation}
    The explicit upwind scheme to  \eqref{con:char-mod} writes
     \begin{equation}\label{con:char-upw}
      \begin{aligned}
     \frac{\rho_{i}^{n+\frac{1}{2}}-\rho_{i}^{n}}{\Delta t} &= -\sqrt{a} \frac{\rho_{i}^{n}-\rho_{i-1}^{n}}{\Delta x},\\
     \frac{l_{i}^{n+\frac{1}{2}}-l_{i}^{n}}{\Delta t}       &=\sqrt{a}\frac{l_{i+1}^{n}-l_{i}^{n}}{\Delta x},
      \end{aligned} 
     \end{equation}
     namely,
     \begin{equation}\label{con:char-convex}
      \begin{aligned}
     \rho_{i}^{n+\frac{1}{2}}&=(1-\nu) \rho_{i}^{n}+\nu \rho_{i-1}^{n},\\
     l_{i}^{n+\frac{1}{2}}&=(1-\nu) l_{i}^{n}+\nu l_{i+1}^{n},
      \end{aligned}
     \end{equation}
     where $\displaystyle\nu=\sqrt{a} \frac{\Delta t}{\Delta x}$ is the CFL number.     $u_i^{n+\frac{1}{2}}$ and $v_i^{n+\frac{1}{2}}$ can be obtained by
     \begin{equation}\label{con:char-2uv}
     \begin{aligned}
     u_{i}^{n+\frac{1}{2}} &=\frac{\rho_{i}^{n+\frac{1}{2}}+l_{i}^{n+\frac{1}{2}}}{2 \sqrt{a}}, \\
     v_{i}^{n+\frac{1}{2}} &=\frac{\rho_{i}^{n+\frac{1}{2}}-l_{i}^{n+\frac{1}{2}}}{2}.
     \end{aligned}
    \end{equation}
     
    The Monte Carlo method can be applied to update $\rho$ and $l$, since \eqref{con:char-convex} are convex combinations. For example, we generate samples according to the following distribution
    \begin{equation}\label{con:char-sto}
    \begin{aligned}
      \rho_i^{n+\frac{1}{2}} &= \left\{\begin{array}{ll}
          {\rho_i^n,}&{\text{with probability  } 1-\nu},\\
          {\rho_{i-1}^n,}&{\text{with probability  } \nu},
      \end{array}
      \right.\\
      l_i^{n+\frac{1}{2}} &= \left\{\begin{array}{ll}
          {l_i^n,}&{\text{with probability  } 1-\nu},\\
          {l_{i+1}^n.}&{\text{with probability  } \nu}.
      \end{array}
      \right.
    \end{aligned}
    \end{equation}
    In the implementation, we let $\rho_{i}^{n+\frac{1}{2}} = \rho_{i}^{n}$ and sample a random variable $\xi_{i}^{n} \sim \mathcal{U}(0,1)$. If $\xi_{i}^{n} <\nu$, we update $\rho_{i}^{n+\frac{1}{2}} = \rho_{i-1}^{n}$. 

    \item relaxation step: we use the fully implicit Euler's method to treat the stiff relaxation equation \eqref{mod:sub-rel},
    \begin{equation}\label{rel:back-EL}
    \begin{aligned}
    \frac{u_{i}^{n+1}-u_{i}^{n+\frac{1}{2}}}{\Delta t}&=0, \\
    \frac{v_{i}^{n+1}-v_{i}^{n+\frac{1}{2}}}{\Delta t}&=-\frac{1}{\varepsilon}\left(v_{i}^{n+1}-b u_{i}^{n+1}\right).
    \end{aligned}
    \end{equation}
    Thus
    \begin{equation}\label{rel:convex}
      \begin{aligned}
        u_{i}^{n+1}&=u_{i}^{n+\frac{1}{2}},\\
        v_{i}^{n+1}&=\frac{\varepsilon}{\varepsilon+\Delta t} v_{i}^{n+\frac{1}{2}}+\frac{\Delta t}{\varepsilon+\Delta t} b u_{i}^{n+\frac{1}{2}}.
      \end{aligned}
    \end{equation}
    $v_{i}^{n+1}$ is a convex combination of $v_{i}^{n+\frac{1}{2}}$ and $b u_{i}^{n+\frac{1}{2}}$, which has the following Monte Carlo approximation
    \begin{equation}\label{rel:uv-sto}
      v_i^{n+1} = \left\{\begin{array}{ll}
          {v_i^{n+\frac{1}{2}},}&{\text{with probability  } \displaystyle p,}\\
          {bu_{i}^{n+\frac{1}{2}}.}&{\text{with probability  } 1-\displaystyle p,}
      \end{array}
      \right.
    \end{equation}
    with $\displaystyle p=\frac{\varepsilon}{\varepsilon+\Delta t}$. 
    % Similarly, we let $v_{i}^{n+1} = v_{i}^{n+\frac{1}{2}}$ and sample a random variable $\zeta_{i}^{n} \sim \mathcal{U}(0,1)$. If $\zeta_{i}^{n} > \displaystyle p$, let $v_{i}^{n+1} = bu_{i}^{n+\frac{1}{2}}$. 

\end{itemize}

The APMC algorithm can be summarized as follows
\begin{algorithm}[H]
  \caption{Asymptotic Preserving Monte Carlo (APMC) - fully-random version}\label{alg:apmc}
  Given the number of samples $N\in \mathbb{N} $, $T>0$, time step $\Delta t$, mesh size $\Delta x$ (or equivalently, $\nu = \sqrt{a}\Delta t /\Delta x$). Generating $N$ samples $\left\{u(k),v(k)\right\}_{k=1}^N$.
  \begin{algorithmic}[1]
  \STATE\label{Step 1} Initialize $u^0_i(k) = u_0(x_i), v^0_i(k) = v_0(x_i)$, $k = 1,\dots, N$;
  
  \FOR{$n = 0, \dots, T/\Delta t-1$}
  \FOR{$k=1,\dots, N$}
  
  \STATE Compute $\rho^{n}_i(k), l^{n}_i(k)$ as in \eqref{eqn:char};
  \STATE\label{alg:apmc-con} Sample $\rho^{n+\frac{1}{2}}_i(k)$ and $l^{n+\frac12}_i(k)$ as in \eqref{con:char-sto};
  \STATE Compute $u^{n+\frac{1}{2}}_i(k), v^{n+\frac{1}{2}}_i(k)$ as in \eqref{con:char-2uv};
  \STATE $u^{n+1}_i(k) = u^{n+\frac{1}{2}}_i(k)$;
  \STATE\label{alg:apmc-rex} Sample $v^{n+1}_i(k) = v^{n+\frac{1}{2}}_i(k)$ as in \eqref{rel:uv-sto};

  \ENDFOR
  \STATE $\hat{u}_i^n = \frac{1}{N}\sum_{k=1}^N u_i^n(k)$, $\hat{v}_i^n = \frac{1}{N}\sum_{k=1}^N v_i^n(k)$;

  \ENDFOR

  \end{algorithmic}
\end{algorithm}

%The bias for the numerical solutions $\hat{u}^n$ at time $n\Delta t$ is $\|\hat{u}^n(\cdot)-u(\cdot,n\Delta t)\|_{L^2}$, and the variance is approximated using $\Var\left[u_i^n\right] \approx \sum_{k=1}^N(u_i^n(k)-\hat{u}_i^n)^2/(N-1)$.

We can replace the convection Step \ref{alg:apmc-con}
in \cref{alg:apmc} by the deterministic scheme \eqref{con:char-convex}. The resulting algorithm is called the \emph{semi-random} version of APMC. Also, we can obtain the deterministic method if we replace the Step \ref{alg:apmc-con} by \eqref{con:char-convex} and the Step \ref{alg:apmc-rex} by \eqref{rel:back-EL}, respectively.

\subsection{Variance analysis}
\label{sec:comparison:randomchoice:analysis}

The deterministic method is asymptotic preserving since it has the correct $\varepsilon \to 0$ limit, as proved in Appendix \ref{sec:appendix:numericallimit}. To apply the Monte Carlo method as $\varepsilon \rightarrow 0$, the variance should be uniformly bounded, i.e.
\begin{equation}\label{varAna:uni-bd}
  \Var\left[u_i^n\right],\Var\left[v_i^n\right] \leq C,
\end{equation}
where $C$ is a constant independent of $\varepsilon$. 

We present the following lemmas for the expectations and variances. The proofs are given in Appendix \ref{sec:appendix:varianceanalysis}.

% Let $u_i^n,v_i^n$ be updated deterministically using \eqref{con:char-convex}, \eqref{con:char-2uv}, \eqref{rel:convex}. We have the following theorem, 

\begin{lemma}\label{thm:det-energy}
  Under the condition $a\geq b^2$, the energy
  \begin{equation}\label{det:prop-energy}
    \begin{aligned}
      \mathcal{E}_n := \sum_i\left[(a-b^2)\left(\E\left[u_i^{n}\right]\right)^2+\left(\E\left[bu_i^{n}-v_i^{n}\right]\right)^2\right]\Delta x,\\
    \end{aligned}
  \end{equation}
  is monotonically decreasing,
  \begin{equation}\label{det:prop-enerdecay}
    \mathcal{E}_n\leq \mathcal{E}_{n-1}\leq \cdots \leq \mathcal{E}_0.
  \end{equation}
  Moreover, the $l_2$ norms of $\E\left[u^n\right],\E\left[v^n\right]$ remain bounded in the sense that
  \begin{equation}\label{det:prop-ubound}
    \left\|\E\left[u^n\right]\right\|_{2} = \sqrt{\sum_i\left(\E\left[u_i^n\right]\right)^2\Delta x}\leq \sqrt{\frac{\mathcal{E}_0}{a-b^2}},
  \end{equation}
  \begin{equation}\label{det:prop-vbound}
    \left\|\E\left[v^n\right]\right\|_{2} = \sqrt{\sum_i\left(\E\left[v_i^n\right]\right)^2\Delta x}\leq \sqrt{\frac{2a\mathcal{E}_0}{a-b^2}}.
  \end{equation}
\end{lemma}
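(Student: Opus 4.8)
The plan is to track the evolution of the energy $\mathcal{E}_n$ through the two substeps of the deterministic scheme — the convection step \eqref{con:char-convex} and the relaxation step \eqref{rel:convex} — and show that neither step increases it. Since the deterministic scheme is obtained by replacing the Monte Carlo updates with their conditional expectations, taking $\E[\cdot]$ of \eqref{con:char-convex} and \eqref{rel:convex} gives deterministic recursions for $\E[u_i^n]$, $\E[v_i^n]$, so it suffices to work with these averaged quantities directly (I will write $U_i^n := \E[u_i^n]$, $V_i^n := \E[v_i^n]$ for brevity in the argument). First I would introduce the characteristic combinations $\rho = \sqrt{a}\,u + v$ and $l = \sqrt{a}\,u - v$ as in \eqref{eqn:char}, noting the algebraic identity
\begin{equation}
  (a - b^2) U^2 + (bU - V)^2 = \tfrac{1}{2}\big(1 - \tfrac{b}{\sqrt a}\big)^2 \tfrac{\rho^2}{?}\, + \cdots,
\end{equation}
— more precisely I would express $(a-b^2)U_i^2 + (bU_i - V_i)^2$ as a positive-definite quadratic form in $(\rho_i, l_i)$ and check that the convection step, being a convex averaging of neighboring values of $\rho$ and of $l$ separately, is $\ell^2$-contractive in each of $\rho$ and $l$ (this is exactly the stability estimate \eqref{eqn:scalarstability} applied to the two decoupled upwind updates). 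Hence any fixed positive-semidefinite quadratic form in $(\rho, l)$ with constant coefficients, summed against $\Delta x$, is non-increasing under the convection step; in particular $\mathcal{E}$ is.

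Next I would handle the relaxation step. Here $U_i$ is unchanged, so the first term $(a-b^2)U_i^2$ is untouched, and $V_i^{n+1} = p\, V_i^{n+1/2} + (1-p)\, b U_i^{n+1/2}$ with $p = \varepsilon/(\varepsilon + \Delta t) \in (0,1)$, which gives $bU_i^{n+1} - V_i^{n+1} = p\,(bU_i^{n+1/2} - V_i^{n+1/2})$. Therefore the second term is multiplied by $p^2 \le 1$ pointwise, so $\mathcal{E}$ does not increase across the relaxation step either. Combining the two substeps gives $\mathcal{E}_n \le \mathcal{E}_{n-1}$, and iterating yields \eqref{det:prop-enerdecay}. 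The bounds \eqref{det:prop-ubound} and \eqref{det:prop-vbound} then follow: \eqref{det:prop-ubound} is immediate since $(a-b^2)\|\E[u^n]\|_2^2 \le \mathcal{E}_n \le \mathcal{E}_0$; for \eqref{det:prop-vbound} I would write $V_i = bU_i - (bU_i - V_i)$ and use the triangle inequality together with $\|\E[u^n]\|_2 \le \sqrt{\mathcal{E}_0/(a-b^2)}$ and $\|bU^n - V^n\|_2 \le \sqrt{\mathcal{E}_0}$, estimating $b^2 \le a$ to absorb constants and arrive at the factor $2a/(a-b^2)$ (possibly after a mild adjustment of the constant, using $(x+y)^2 \le 2x^2 + 2y^2$).

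The main obstacle I anticipate is the bookkeeping in the first step: verifying that $\mathcal{E}_n$, written in the original variables $(U,V)$, really does correspond to a quadratic form in $(\rho, l)$ whose coefficient matrix is positive semidefinite and, crucially, \emph{constant} (independent of $i$ and $n$), so that the per-component contractivity of the upwind averaging applies cleanly. The change of variables $(U,V) \mapsto (\rho, l)$ is linear with constant coefficients, so this is really just a computation confirming that $(a-b^2)U^2 + (bU-V)^2$ pulls back to a nonnegative diagonal-plus-cross-term form in $(\rho, l)$; once that identity is in hand, the contraction estimate \eqref{eqn:scalarstability} does all the work. A secondary, purely cosmetic, point is matching the exact constant $\sqrt{2a\mathcal{E}_0/(a-b^2)}$ in \eqref{det:prop-vbound}, which may require choosing the quadratic-form decomposition or the final triangle-inequality step with a little care rather than crudely.
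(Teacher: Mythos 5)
Your overall strategy matches the paper's: pass to expectations, show the convection step does not increase $\mathcal{E}$ by working in the characteristic variables, show the relaxation step multiplies the $(bU-V)$-part by $p^2\le 1$ while leaving $U$ fixed, and then read off \eqref{det:prop-ubound} directly and \eqref{det:prop-vbound} via $V=bU-(bU-V)$, $(x+y)^2\le 2x^2+2y^2$ and $b^2\le a$, which gives exactly the factor $2a/(a-b^2)$. The relaxation-step and boundedness parts of your argument are correct and identical to the paper's.

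The one point you should tighten is the convection step. Your intermediate claim that \emph{any} fixed positive-semidefinite quadratic form in $(\rho,l)$ with constant coefficients is non-increasing under the convection step is false: $\rho$ is averaged with its left neighbour and $l$ with its right neighbour, so a cross term $\sum_i\rho_i l_i$ is not controlled (e.g.\ take $u\equiv 0$, $v$ non-constant; then $(\rho+l)^2=4au^2$ sums to zero before the step but not after). Per-variable $\ell^2$-contractivity only handles forms of the shape $c_+\rho^2+c_-l^2$ with $c_\pm\ge 0$ and no cross term, so the verification you flag as the "main obstacle" must establish diagonality, not merely positive semidefiniteness. Fortunately the computation does deliver this: using $U=(\rho+l)/(2\sqrt a)$, $V=(\rho-l)/2$ one gets
\begin{equation}
  (a-b^2)U^2+(bU-V)^2 \;=\; aU^2+V^2-2bUV \;=\; \tfrac{1}{2}\Bigl(1-\tfrac{b}{\sqrt a}\Bigr)\rho^2+\tfrac{1}{2}\Bigl(1+\tfrac{b}{\sqrt a}\Bigr)l^2,
\end{equation}
with both coefficients nonnegative precisely under $a\ge b^2$, so the argument closes. (This diagonal form is exactly what the paper exhibits in its fully-random variance analysis; in the proof of this lemma the paper instead expands the convex combinations and bounds the cross-correlation sums by Cauchy--Schwarz, which is the same estimate in different clothing.)
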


\begin{remark}
 We have the following estimate for $\E\left[u_{i+1}^n-u_{i}^n\right]$ and $\E\left[v_{i+1}^n-v_{i}^n\right]$,
\begin{equation}\label{rem:square-variation}
  \begin{aligned}
    &\quad\sum_i\left[(a-b^2)\left(\E\left[u_{i+1}^{n+1}-u_{i}^{n+1}\right]\right)^2+\left(\E\left[b(u_{i+1}^{n+1}-u_i^{n+1})-(v_{i+1}^{n+1}-v_i^{n+1})\right]\right)^2\right]\Delta x\\
    &\leq \sum_i\left[(a-b^2)\left(u_{i+1}^0-u_i^{0}\right)^2+\left(b\left(u_{i+1}^0-u_i^{0}\right)-\left(v_{i+1}^0-v_i^{0}\right)\right)^2\right]\Delta x:= \mathcal{C}_0(\Delta x)^2.
  \end{aligned}
\end{equation}

Similar as \eqref{det:prop-ubound}, \eqref{det:prop-vbound}, we have \begin{equation}
    \sqrt{\sum_{i} \left(\E\left[\frac{u_{i+1}^{n+1}-u_{i}^{n+1}}{\Delta x}\right]\right)^2 \Delta x} \leq \sqrt{\frac{\mathcal{C}_0}{a-b^2}}, \quad 
    \sqrt{\sum_{i} \left(\E\left[\frac{v_{i+1}^{n+1}-v_{i}^{n+1}}{\Delta x}\right]\right)^2 \Delta x} \leq \sqrt{\frac{2a\mathcal{C}_0}{a-b^2}}.
\end{equation}
% \lzz{the first order derivative? write the formula}
% namely, the first order derivative of numerical solution is bounded in $l_2$ norm. 
\end{remark}

\begin{lemma}\label{prop:det-buv}
 As $\varepsilon\rightarrow 0^+$, $\E\left[bu_i^n-v_i^n\right]\to 0$, and for any fixed $\varepsilon>0$, $\sum_j\sum_i\left(\E\left[bu_i^j-v_i^j\right]\right)^2\Delta x$ is uniformly bounded in the sense that
 \begin{equation}\label{det:prop-sum-vbu}
  \sum_{j=1}^\infty \sum_i \left(\E\left[bu_i^j-v_i^j\right]\right)^2\Delta x \leq \frac{\displaystyle p^2}{1-\displaystyle p^2}\mathcal{E}_0.
 \end{equation}
 In particular, $\sum_i \left(\E\left[bu_i^n-v_i^n\right]\right)^2\Delta x\to 0$ as $n\rightarrow\infty$.
\end{lemma}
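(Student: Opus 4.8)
The plan is to reduce both assertions to the deterministic evolution of the means — by linearity of expectation the convex-combination updates \eqref{con:char-sto} and \eqref{rel:uv-sto} become deterministic schemes for $\E[u_i^n]$ and $\E[v_i^n]$ — and then to exploit that the relaxation step contracts $bu-v$ by the factor $p=\varepsilon/(\varepsilon+\Delta t)$ while the convection step does not increase the energy $\mathcal{E}_n$ of \cref{thm:det-energy}. The starting point is the identity obtained by taking expectations in \eqref{rel:convex}: since $\E[u_i^{n+1}]=\E[u_i^{n+1/2}]$,
\[
  \E\!\left[bu_i^{n+1}-v_i^{n+1}\right]=p\,\E\!\left[bu_i^{n+1/2}-v_i^{n+1/2}\right].
\]
Writing $\widetilde{\mathcal{E}}_{n+1/2}:=\sum_i\bigl[(a-b^2)(\E[u_i^{n+1/2}])^2+(\E[bu_i^{n+1/2}-v_i^{n+1/2}])^2\bigr]\Delta x$ for the post-convection energy, this identity gives $\widetilde{\mathcal{E}}_{n+1/2}-\mathcal{E}_{n+1}=(1-p^2)\sum_i(\E[bu_i^{n+1/2}-v_i^{n+1/2}])^2\Delta x=\frac{1-p^2}{p^2}\sum_i(\E[bu_i^{n+1}-v_i^{n+1}])^2\Delta x$, because the $(a-b^2)(\E u)^2$ contributions of $\widetilde{\mathcal{E}}_{n+1/2}$ and $\mathcal{E}_{n+1}$ coincide.

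Next I would isolate from the proof of \cref{thm:det-energy} the intermediate fact that the convection sub-step does not increase the energy, $\widetilde{\mathcal{E}}_{n+1/2}\le\mathcal{E}_n$ (combining this with $\mathcal{E}_{n+1}\le\widetilde{\mathcal{E}}_{n+1/2}$, which holds since $p^2\le1$, reproduces the stated monotonicity $\mathcal{E}_{n+1}\le\mathcal{E}_n$). Together with the identity above this yields the per-step dissipation estimate $\mathcal{E}_n-\mathcal{E}_{n+1}\ge\widetilde{\mathcal{E}}_{n+1/2}-\mathcal{E}_{n+1}=\frac{1-p^2}{p^2}\sum_i(\E[bu_i^{n+1}-v_i^{n+1}])^2\Delta x$. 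Summing over $n=0,\dots,N-1$, the left side telescopes to $\mathcal{E}_0-\mathcal{E}_N$, which is at most $\mathcal{E}_0$ since $\mathcal{E}_N\ge0$ under $a\ge b^2$; reindexing the right side gives $\sum_{n=1}^N\sum_i(\E[bu_i^n-v_i^n])^2\Delta x\le\frac{p^2}{1-p^2}\mathcal{E}_0$, and letting $N\to\infty$ establishes \eqref{det:prop-sum-vbu}. Since the resulting series converges with nonnegative terms, its general term vanishes, which is the ``in particular'' claim $\sum_i(\E[bu_i^n-v_i^n])^2\Delta x\to0$ as $n\to\infty$.

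For the limit $\varepsilon\to0^+$, the inequality $\widetilde{\mathcal{E}}_{n+1/2}\le\mathcal{E}_0$ together with nonnegativity of its first term gives $(\E[bu_i^{n+1/2}-v_i^{n+1/2}])^2\Delta x\le\mathcal{E}_0$, a bound uniform in $\varepsilon$ since $\mathcal{E}_0$ depends only on the (fixed) initial data and mesh. Hence, for any fixed $n\ge1$ and fixed grid, $|\E[bu_i^n-v_i^n]|=p\,|\E[bu_i^{n-1/2}-v_i^{n-1/2}]|\le p\sqrt{\mathcal{E}_0/\Delta x}\to0$ as $\varepsilon\to0^+$, because $p=\varepsilon/(\varepsilon+\Delta t)\to0$.

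The main obstacle is the step $\widetilde{\mathcal{E}}_{n+1/2}\le\mathcal{E}_n$: one must show that the upwind convection sub-step does not increase the weighted energy defining $\mathcal{E}$. In the characteristic variables $\rho,l$ of \eqref{eqn:char} each scalar upwind transport is $\ell^2$-stable, but $\mathcal{E}$ is a non-diagonal quadratic form in $(\rho,l)$, so this requires precisely the estimate underlying \cref{thm:det-energy}; if that proof records only the composed inequality $\mathcal{E}_{n+1}\le\mathcal{E}_n$, I would re-examine it to extract the intermediate bound. The remaining ingredients — the relaxation identity, the telescoping, and the $\varepsilon\to0$ limit — are elementary.
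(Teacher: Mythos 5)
Your proof is correct and follows essentially the same route as the paper: take expectations in the relaxation update to get the contraction $\E[bu_i^{n+1}-v_i^{n+1}]=p\,\E[bu_i^{n+1/2}-v_i^{n+1/2}]$, combine it with the convection-step energy non-increase $\mathcal{E}_{n+1/2}\le\mathcal{E}_n$ (which the paper does record explicitly as an intermediate inequality in the proof of \cref{thm:det-energy}, so the obstacle you flag does not arise), and telescope the resulting per-step dissipation identity to obtain \eqref{det:prop-sum-vbu}. The only cosmetic difference is that you phrase the $\varepsilon\to0^+$ limit via a pointwise bound $|\E[bu_i^n-v_i^n]|\le p\sqrt{\mathcal{E}_0/\Delta x}$, whereas the paper deduces $\ell^2$ convergence directly from the summed bound with $p^2/(1-p^2)\to0$; both are valid.
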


To better understand the variances introduced in the convection step and in the relaxation step, respectively, we first consider the semi-random APMC with the random  relaxation step and the deterministic convection step. We have the following lemma for variances. 
\begin{lemma}\label{thm:s-sto-energy}
  The variance of the semi-random APMC solution is bounded in the sense that 
  \begin{equation}\label{s-sto:prop-var-bd}
    \sum_i \left[(a-b^2)\Var\left[\tilde{u}_i^{n}\right]+\Var\left[b\tilde{u}_i^{n}-\tilde{v}_i^{n}\right]\right] \Delta x \leq \frac{\displaystyle p}{1+\displaystyle p} \mathcal{E}_0, \quad \forall n,
  \end{equation}
  where $\mathcal{E}_0 = \sum_i\left[(a-b^2)\left(u_i^{0}\right)^2+\left(bu_i^{0}-v_i^{0}\right)^2\right] \Delta x$. Namely, for any $\varepsilon\ll 1$, the variance is uniformly bounded and tends to zero as $\varepsilon\rightarrow 0$.
\end{lemma}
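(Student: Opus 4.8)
The plan is to mimic the deterministic energy argument of \cref{thm:det-energy} at the level of variances. Abbreviating the semi-random solution $\tilde u_i^n,\tilde v_i^n$ by $u_i^n,v_i^n$ and writing $w_i^n := b u_i^n - v_i^n$, I introduce the energy--variance functional
\[
\mathcal{V}_n := \sum_i\left[(a-b^2)\Var\left[u_i^n\right] + \Var\left[w_i^n\right]\right]\Delta x .
\]
The crucial algebraic observation is that the quadratic form $(a-b^2)\xi^2 + (b\xi-\eta)^2 = a\xi^2 - 2b\xi\eta + \eta^2$ becomes \emph{diagonal}, with vanishing cross term, in the characteristic variables $\rho=\sqrt a\,u+v$, $l=\sqrt a\,u-v$; applied to the centered variables this gives
\[
(a-b^2)\Var\left[u_i^n\right] + \Var\left[w_i^n\right] = \tfrac{1}{2\sqrt a}\left[(\sqrt a - b)\Var\left[\rho_i^n\right] + (\sqrt a + b)\Var\left[l_i^n\right]\right].
\]
I also note that, since the random relaxation update is unbiased, $\E[u_i^n]$ and $\E[v_i^n]$ coincide with the deterministic solution, so the bounds of \cref{thm:det-energy} and \cref{prop:det-buv} apply verbatim to the expectations appearing below; and $\mathcal{V}_0=0$ because the initial data are deterministic.

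Next I split one time step into the deterministic convection half-step and the random relaxation half-step and track $\mathcal V$. Conditional on the history $\mathcal F_n$, convection sends $\rho_i$ and $l_i$ to convex combinations of their neighbors (the CFL number $\nu\in[0,1]$), so the elementary inequality $\Var[\lambda X+(1-\lambda)Y]\le\lambda\Var[X]+(1-\lambda)\Var[Y]$ --- Cauchy--Schwarz followed by concavity of $\sqrt{\cdot}$, already used for stability --- together with periodic reindexing gives $\sum_i\Var[\rho_i^{n+1/2}]\le\sum_i\Var[\rho_i^n]$ and likewise for $l$; using the stability condition $a\ge b^2$ so that the coefficients $\sqrt a\mp b$ are nonnegative, this yields $\mathcal V_{n+1/2}\le\mathcal V_n$. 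For the relaxation half-step $u_i^{n+1}=u_i^{n+1/2}$ is $\mathcal F_n$-measurable, while $v_i^{n+1}$ equals $v_i^{n+1/2}$ with probability $p$ and $b u_i^{n+1/2}$ with probability $1-p$, whence $\E[w_i^{n+1}\mid\mathcal F_n]=p\,w_i^{n+1/2}$ and $\Var[w_i^{n+1}\mid\mathcal F_n]=p(1-p)\big(w_i^{n+1/2}\big)^2$. The law of total variance then gives
\[
\Var\left[w_i^{n+1}\right] = p\,\Var\left[w_i^{n+1/2}\right] + p(1-p)\left(\E\left[w_i^{n+1/2}\right]\right)^2 .
\]
Combining this with $\mathcal V_{n+1/2}\le\mathcal V_n$ and discarding the nonnegative term $(1-p)\sum_i\Var[w_i^{n+1/2}]\Delta x$ produces the one-step bound $\mathcal V_{n+1}\le\mathcal V_n + p(1-p)\sum_i\big(\E[w_i^{n+1/2}]\big)^2\Delta x$.

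To close the recursion I use the expectation identity $\E[w_i^{n+1}]=p\,\E[w_i^{n+1/2}]$ to trade the half-step error term for the integer-step one, obtaining $\mathcal V_{n+1}\le\mathcal V_n + \tfrac{1-p}{p}\sum_i\big(\E[b u_i^{n+1}-v_i^{n+1}]\big)^2\Delta x$. Summing from $n=0$, using $\mathcal V_0=0$, and invoking $\sum_{n\ge1}\sum_i\big(\E[b u_i^n-v_i^n]\big)^2\Delta x\le\tfrac{p^2}{1-p^2}\mathcal E_0$ from \cref{prop:det-buv} gives
\[
\mathcal V_N \le \tfrac{1-p}{p}\cdot\tfrac{p^2}{1-p^2}\,\mathcal E_0 = \tfrac{p}{1+p}\,\mathcal E_0 ,
\]
which is exactly \eqref{s-sto:prop-var-bd}; since $p=\varepsilon/(\varepsilon+\Delta t)\to0$ as $\varepsilon\to0$, the right-hand side is uniformly bounded and vanishes in the relaxation limit.

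The main obstacle is that, unlike the deterministic energy $\mathcal E_n$, the functional $\mathcal V_n$ is \emph{not} monotone: the random relaxation injects a fresh variance of order $p(1-p)\E\big[(w^{n+1/2})^2\big]$ at every step, so a plain energy-decay argument fails. The resolution is to recognize that, once the factor $p$ from the expectation recursion is absorbed, this injected variance is dominated by the \emph{already summable} deterministic sequence $\sum_i(\E[b u_i^n-v_i^n])^2\Delta x$ controlled in \cref{prop:det-buv}; making the powers of $p$ balance so that the constant comes out as exactly $p/(1+p)$ is the delicate bookkeeping. A secondary technical point is the diagonalization of the energy form in characteristic variables --- one needs the $\rho\,l$ cross term to drop out so that only $\Var[\rho]$ and $\Var[l]$, not their covariance, enter the convection estimate.
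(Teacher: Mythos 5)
Your proof is correct and follows essentially the same route as the paper's: monotonicity of the variance energy under the deterministic convection step (you phrase it via the diagonalized characteristic form and convexity of variance under convex combinations, the paper writes the equivalent covariance identities in $u,v$), the conditional-variance/law-of-total-variance identity $\Var[w^{n+1}]=p\Var[w^{n+1/2}]+p(1-p)(\E[w^{n+1/2}])^2$ for the random relaxation step, and telescoping from $\mathcal V_0=0$ against the summable deterministic sequence of \cref{prop:det-buv} to get the constant $p/(1+p)$. No gaps; the bookkeeping matches \eqref{s-sto:cond-var-v}--\eqref{s-sto:energy-1-step}.
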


\begin{remark}
  We can use the same trick as in the proof of \cref{prop:det-buv} to get an estimate of $\Var\left[bu_i^n-v_i^n\right]$,

  \begin{equation}\label{s-sto:sum-var-buv}
    \begin{aligned}
      &\sum_{j=1}^{n+1} \sum_i \Var\left[bu_i^{j}-v_i^{j}\right] \Delta x 
    \leq \frac{1}{\displaystyle p} \sum_{j=1}^{n+1} \sum_i\left(\E\left[bu_i^{j}-v_i^{j}\right]\right)^2 \Delta x \leq \frac{\displaystyle p}{1-\displaystyle p^2} \mathcal{E}_0, \quad \forall n,
    \end{aligned}
  \end{equation}
  in particular,
  \begin{equation}\label{s-sto:var-buv-0}
    \sum_i \Var\left[bu_i^{n}-v_i^{n}\right] \Delta x \rightarrow 0, \quad \text{as} \quad n \rightarrow \infty .
  \end{equation}
\end{remark}

The main difference for the variance analysis of the fully-random APMC in the following \cref{thm:f-sto-energy}, is to use the conditional variance formula for characteristic variables in the random convection step. 
\begin{lemma}\label{thm:f-sto-energy}
  The variance of the fully-random APMC solutions is bounded in the sense that 
  \begin{equation}\label{f-sto:prop-var-bd}
    \sum_i \left[(a-b^2)\Var\left[u_i^{n}\right]+\Var\left[bu_i^{n}-v_i^{n}\right]\right] \Delta x \leq \frac{\displaystyle p}{1+\displaystyle p} \mathcal{E}_0+\frac{a(1-\nu)}{\nu}T\mathcal{C}_0\Delta t,\quad \forall n,
  \end{equation}
  where
\begin{equation*}
  \begin{aligned}
    \mathcal{E}_0 & = \sum_i\left[(a-b^2)\left(u_i^{0}\right)^2+\left(bu_i^{0}-v_i^{0}\right)^2\right] \Delta x,\\
    \mathcal{C}_0 & = \sum_i\left[(a-b^2)\left(\frac{u_{i+1}^0-u_i^{0}}{\Delta x}\right)^2+\left[b\left(\frac{u_{i+1}^0-u_i^{0}}{\Delta x}\right)-\left(\frac{v_{i+1}^0-v_i^{0}}{\Delta x}\right)\right]^2\right] \Delta x.
  \end{aligned}
\end{equation*}
Then for any $\varepsilon\ll 1$, the variance is uniformly bounded and tends to zero as $\varepsilon\rightarrow 0$, $\Delta t\rightarrow 0$. Therefore, the APMC method  still works in the regime when $\varepsilon \ll 1$ and $\Delta t$ independent of $\varepsilon$, i.e. has the asymptotic preserving property.
\end{lemma}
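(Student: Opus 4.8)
The plan is to compare two energies attached to the random evolution. Let $\mathcal{F}_n$ be the natural filtration generated by the random choices in \cref{alg:apmc} up to time level $n$, write $\E_n[\cdot]:=\E[\cdot\mid\mathcal{F}_n]$, and let $\mathcal{F}_{n+1/2}$ be the filtration after the convection substep of step $n$. Introduce the quadratic form $\mathcal{N}[(w,z)]:=\sum_i\bigl[(a-b^2)w_i^2+(bw_i-z_i)^2\bigr]\Delta x$, which is positive semidefinite since $a\ge b^2$, and set $\widetilde{\mathcal{E}}_n:=\E\bigl[\mathcal{N}[(u^n,v^n)]\bigr]$, while $\mathcal{E}_n:=\mathcal{N}[(\E[u^n],\E[v^n])]$ is the deterministic energy of \cref{thm:det-energy}. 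Because $\mathcal{N}$ is quadratic, $\widetilde{\mathcal{E}}_n-\mathcal{E}_n=\sum_i\bigl[(a-b^2)\Var[u_i^n]+\Var[bu_i^n-v_i^n]\bigr]\Delta x$ is exactly the quantity to be bounded, so it suffices to estimate $\widetilde{\mathcal{E}}_n-\mathcal{E}_n$. In the characteristic variables $\rho=\sqrt a\,u+v$, $l=\sqrt a\,u-v$ one has $\mathcal{N}[(w,z)]=\tfrac{1}{2\sqrt a}\sum_i\bigl[(\sqrt a-b)\rho_i^2+(\sqrt a+b)l_i^2\bigr]\Delta x$, i.e. $\mathcal{N}$ diagonalizes with \emph{no} $\rho l$ cross term --- this is the point that makes the random convection step tractable, since then only the marginal laws of the individual random choices enter.

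I would then run one time step $n\to n+1$ through its two substeps, writing $\widetilde{\mathcal{E}}_n^{\mathrm{conv}}$, $\mathcal{E}_n^{\mathrm{conv}}$ for the same energies evaluated at $(u^{n+1/2},v^{n+1/2})$, and $(\delta w)_i:=w_{i+1}-w_i$. In the random convection step, $\E_n[(\rho_i^{n+1/2})^2]=(1-\nu)(\rho_i^n)^2+\nu(\rho_{i-1}^n)^2$ (and likewise for $l$), so by periodicity $\sum_i\E_n[(\rho_i^{n+1/2})^2]\Delta x=\sum_i(\rho_i^n)^2\Delta x$; hence the random choice leaves the second moment \emph{exactly invariant}: $\widetilde{\mathcal{E}}_n^{\mathrm{conv}}=\widetilde{\mathcal{E}}_n$. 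For the means, $\E[\rho_i^{n+1/2}]=(1-\nu)\E[\rho_i^n]+\nu\E[\rho_{i-1}^n]$ is a genuine average, so $\sum_i(\E[\rho_i^{n+1/2}])^2\Delta x=\sum_i(\E[\rho_i^n])^2\Delta x-\nu(1-\nu)\sum_i(\E[(\delta\rho^n)_i])^2\Delta x$ and similarly for $l$, i.e. $\mathcal{E}_n^{\mathrm{conv}}=\mathcal{E}_n-D_n$ with $D_n=\nu(1-\nu)\,\mathcal{N}\bigl[(\delta\E[u^n],\delta\E[v^n])\bigr]\ge 0$; by the a priori bound \eqref{rem:square-variation}, $D_n\le\nu(1-\nu)\mathcal{C}_0(\Delta x)^2=\tfrac{a(1-\nu)}{\nu}\mathcal{C}_0(\Delta t)^2$ since $\nu=\sqrt a\,\Delta t/\Delta x$. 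In the relaxation step $u$ is unchanged and $bu_i^{n+1}-v_i^{n+1}$ equals $bu_i^{n+1/2}-v_i^{n+1/2}$ with probability $p=\varepsilon/(\varepsilon+\Delta t)$ and $0$ otherwise, so $\E[(bu_i^{n+1}-v_i^{n+1})^2\mid\mathcal{F}_{n+1/2}]=p(bu_i^{n+1/2}-v_i^{n+1/2})^2$, whence $\widetilde{\mathcal{E}}_{n+1}=\widetilde{\mathcal{E}}_n^{\mathrm{conv}}-(1-p)R_n$ with $R_n:=\E\bigl[\sum_i(bu_i^{n+1/2}-v_i^{n+1/2})^2\Delta x\bigr]$; the same relation for the means gives $\mathcal{E}_{n+1}=\mathcal{E}_n^{\mathrm{conv}}-(1-p^2)\tilde R_n$ with $\tilde R_n:=\sum_i(\E[bu_i^{n+1/2}-v_i^{n+1/2}])^2\Delta x$.

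Subtracting and writing $W_n:=R_n-\tilde R_n=\sum_i\Var[bu_i^{n+1/2}-v_i^{n+1/2}]\Delta x\ge 0$ collapses the four identities to
\[
  \bigl(\widetilde{\mathcal{E}}_{n+1}-\mathcal{E}_{n+1}\bigr)=\bigl(\widetilde{\mathcal{E}}_{n}-\mathcal{E}_{n}\bigr)+D_n-(1-p)W_n+p(1-p)\tilde R_n .
\]
Summing $n=0,\dots,N-1$ with $N=T/\Delta t$, using $\widetilde{\mathcal{E}}_0-\mathcal{E}_0=0$ (deterministic initial data) and $W_n\ge 0$, gives $\widetilde{\mathcal{E}}_N-\mathcal{E}_N\le\sum_n D_n+p(1-p)\sum_n\tilde R_n$. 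The first sum is $\le N\cdot\tfrac{a(1-\nu)}{\nu}\mathcal{C}_0(\Delta t)^2=\tfrac{a(1-\nu)}{\nu}T\mathcal{C}_0\Delta t$. For the second, the relaxation identity for the means gives $\E[bu_i^{n+1}-v_i^{n+1}]=p\,\E[bu_i^{n+1/2}-v_i^{n+1/2}]$, so $\tilde R_n=p^{-2}\sum_i(\E[bu_i^{n+1}-v_i^{n+1}])^2\Delta x$; by \cref{prop:det-buv}, $\sum_{n=0}^{N-1}\tilde R_n\le p^{-2}\cdot\tfrac{p^2}{1-p^2}\mathcal{E}_0$, hence $p(1-p)\sum_n\tilde R_n\le\tfrac{p(1-p)}{1-p^2}\mathcal{E}_0=\tfrac{p}{1+p}\mathcal{E}_0$. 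Adding the two bounds yields exactly \eqref{f-sto:prop-var-bd}.

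I expect the main obstacle to be the convection-step bookkeeping: one must recognize that the random-choice update keeps the $\ell^2$ second moment of the characteristic variables exactly fixed, so that the extra term $D_n$ does \emph{not} couple back to the unknown variances (hence no Gr\"onwall-type growth over $T/\Delta t$ steps), while the deterministic upwind step on the means strictly dissipates; the residual mismatch --- all that feeds the stochastic energy in the convection step --- is then controlled purely by the deterministic constant $\mathcal{C}_0$ through \eqref{rem:square-variation}. A secondary point is matching the half-step quantity $\tilde R_n$ with the integer-step sum of \cref{prop:det-buv}, which is precisely the relaxation identity above; apart from these two points the argument is the same telescoping estimate as in \cref{thm:s-sto-energy}, the only genuinely new ingredient being the bounded series $\sum_n D_n$.
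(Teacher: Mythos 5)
Your proposal is correct and is essentially the paper's own argument in a lightly reorganized form: tracking the second-moment energy $\E[\mathcal{N}]$ (exactly conserved by the random convection step) and the mean energy $\mathcal{N}[\E]$ separately, and subtracting, reproduces precisely the conditional-variance identities \eqref{f-sto:con-var-rho}--\eqref{f-sto:energy-h-step} and \eqref{s-sto:cond-var-v} used in Appendix \ref{sec:appendix:varianceanalysis}. All the essential ingredients coincide -- the cross-term-free diagonalization in the characteristic variables, the mean-difference term $D_n=\nu(1-\nu)V_n\Delta x$ controlled by \eqref{rem:square-variation}, the relaxation discount giving the $p(1-p)\tilde R_n$ contribution summed via \cref{prop:det-buv} to $\frac{p}{1+p}\mathcal{E}_0$, and the telescoping over $T/\Delta t$ steps -- so the resulting bound matches \eqref{f-sto:prop-var-bd} exactly.
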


\subsection{Multilevel APMC}
We introduce the multilevel version of APMC (MLAPMC) algorithm here. Since the random variables are distinct at the finest temporal and spatial discretization level, we encounter similar situation as the Case II in Example \ref{sec:comparison:scalar}, Example \ref{sec:comparison:euler} and Example \ref{sec:comparison:shallowwater}. Therefore, we need to use the trick introduced in \cite{lovbak2021multilevel} and in \eqref{eqn:combinetrick} to construct coarser random variables and compute MLMC corrections.  

The fully-random MLAPMC algorithm of one sample on level $l$ can be stated as follows.
% \lzz{Full-random}
\begin{algorithm}[H]
  \caption{Multilevel APMC (MLAPMC) - fully-random version}\label{alg:mlapmc}
  Given $T>0$, time step $\Delta t_l$, CFL number $\nu = \sqrt{a}\Delta t_l /\Delta x_l$.
  \begin{algorithmic}[1]
  \STATE Initialize $u^0_i = u_0(x_i), v^0_i = v_0(x_i)$, $\bar{u}^0_{i_1} = u_0(x_{i_1\Gamma }), \bar{v}^0_{i_1} = v_0(x_{i_1\Gamma })$, for $i_1 =0, \dots,X_{l-1}-1$;
  
  \FOR{$n = 0, \dots, M_{l-1}-1$}
  \FOR{$k=0, \dots,\Gamma-1$}
  \STATE compute $\rho^{n^\prime}_i, l^{n^\prime}_i$ as in \eqref{eqn:char}, where $n^\prime=n \Gamma +k$;
  \STATE sample $\rho^{n^\prime+\frac{1}{2}}_i$ with $\xi_{i,k} \sim \mathcal{U}(0,1)$ and $l^{n^\prime+\frac{1}{2}}_i$ with $\eta_{i,k} \sim \mathcal{U}(0,1)$ as in \eqref{con:char-sto};
  \STATE compute $u^{n^\prime+\frac{1}{2}}_i$, $v^{n^\prime+\frac{1}{2}}_i$ as in \eqref{con:char-2uv};
  \STATE $u^{n^\prime+1}_i = u^{n^\prime+\frac{1}{2}}_i$;
  \STATE sample $v^{n^\prime+1}_i$ with $\zeta_{i,k} \sim \mathcal{U}(0,1)$ as in \eqref{rel:uv-sto};
  \ENDFOR
  \STATE compute $\bar{\rho}^{n}_{i_1}, \bar{l}^{n}_{i_1}$ as in \eqref{eqn:char};
  \STATE $\xi_{i_1}=\left(\max_{i_1\leq i<i_1+\Gamma,0\leq k<\Gamma}\{\xi_{i,k}\}\right)^{\Gamma^2}$, $\eta_{i_1}=\left(\max_{i_1\leq i<i_1+\Gamma,0\leq k<\Gamma}\{\eta_{i,k}\}\right)^{\Gamma^2}$;
  \STATE sample $\bar{\rho}^{n+\frac{1}{2}}_{i_1}$ with $\xi_{i_1}$ and $\bar{l}^{n+\frac{1}{2}}_{i_1}$ with $\eta_{i_1}$ as in \eqref{con:char-sto};
  \STATE compute $\bar{u}^{n+\frac{1}{2}}_{i_1}$, $\bar{v}^{n+\frac{1}{2}}_{i_1}$ as in \eqref{con:char-2uv};
  \STATE $\bar{u}^{n+1}_{i_1} = \bar{u}^{n+\frac{1}{2}}_{i_1}$;
  \STATE $\zeta_{i_1}=\left(\max_{i_1\leq i<i_1+\Gamma,0\leq k<\Gamma}\{\zeta_{i,k}\}\right)^{\Gamma^2}$;
  \STATE sample $\bar{v}^{n+1}_{i_1}$ with $\zeta_{i_1}$ as in \eqref{rel:uv-sto};
  \ENDFOR
  \end{algorithmic}
\end{algorithm}

As discussed before, the variance in the asymptotic regime ($\varepsilon \ll 1$) is small, and APMC performs well enough. For comparison, we apply MLAPMC in the non-stiff regime ($\varepsilon = 1$) with semi-random and fully-random schemes respectively. We consider the initial value
\begin{equation}
    u_0(x) =\frac{\sin(x)+1}{2}, \quad v_0(x) = 0,
\end{equation}
with $x\in[-1,1]$, $t\in[0,T]$, and periodic boundary condtion. 

We take the numerical parameters
\begin{equation}
    \nu = \sqrt{a}\frac{\Delta t_l}{\Delta x_l} = \frac{1}{2}, \quad \Gamma = 2, \quad \Delta x_0 = \frac{1}{32}, \quad T = 1, \quad N_I = 500, \quad a = 1, \quad b = 2.
\end{equation}

% \lzz{how to compute the cost}
We implement both MLAPMC and APMC algorithms with accuracy thresholds $\delta = 0.02$, $0.01$, $0.005$, $0.002$, $0.001$. The results are shown in \cref{fig:apmcmlmc} and \cref{fig:apmcfullmlmc}. The costs of two algorithms with respect to $\delta$ are plotted in the left subfigure, and the right subfigure shows the variances of $P_{l}^{(l)}$ and $P_{l}^{(l)}-P_{l}^{(l-1)}$. We can see that both $\Var\left[P_{l}^{(l)}\right]$ and $\Var\left[P_{l}^{(l)}-P_{l}^{(l-1)}\right]$ decay as $\Delta t_l$ decreases in semi-random case, and the decay orders are about $0.5$, i.e. $\beta_0=\beta=0.5$. The costs of APMC and MLAPMC are both $O(\delta^{-3.5})$. While in the fully-random case, $\Var\left[P_{l}^{(l)}\right]$ and $\Var\left[P_{l}^{(l)}-P_{l}^{(l-1)}\right]$ decay very slowly, i.e. $\beta \simeq 0$ and $\beta_0 \simeq 0$. The costs of APMC and MLAPMC in fully-random case are both $O(\delta^{-4})$.

\begin{figure}[H]
%   \subfigure{
%     \includegraphics[width=0.45\textwidth]{mlmc/apmc/cost.png}}
%   \subfigure{
%     \includegraphics[width=0.45\textwidth]{mlmc/apmc/var.png}}
      \subfigure{
        \includegraphics[width=0.45\textwidth]{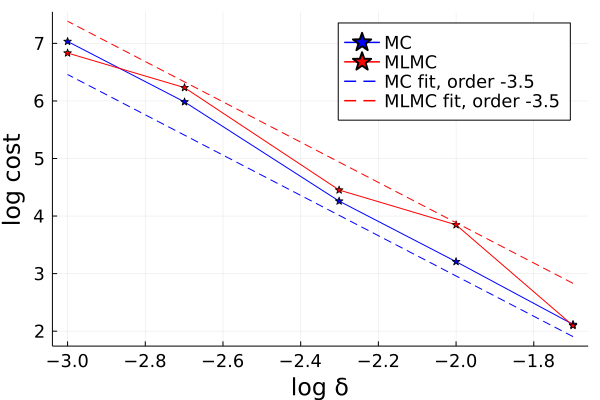}}
      \subfigure{
        \includegraphics[width=0.45\textwidth]{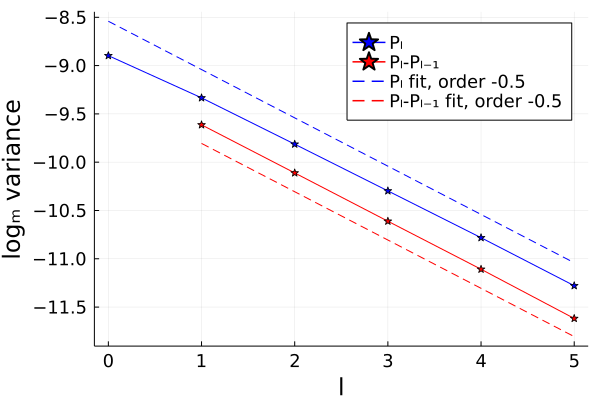}}
  \caption{The computational cost w.r.t accuracy threshold $\delta$ (left) and variance on different levels (right) using semi-random APMC and MLAPMC methods.}
  \label{fig:apmcmlmc}
\end{figure}

\begin{figure}[H]
%   \subfigure{
%     \includegraphics[width=0.45\textwidth]{mlmc/apmc-full/cost.png}}
%   \subfigure{
%     \includegraphics[width=0.45\textwidth]{mlmc/apmc-full/var.png}}
      \subfigure{
        \includegraphics[width=0.45\textwidth]{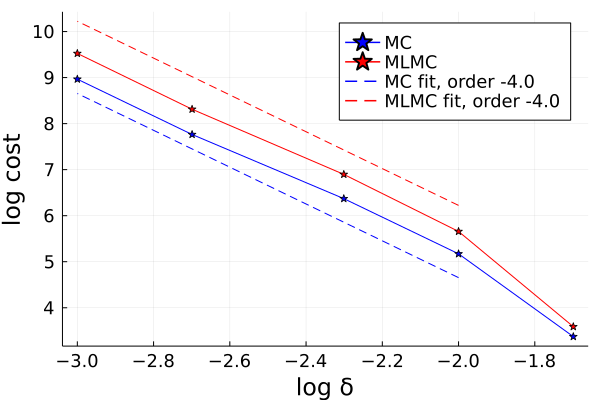}}
      \subfigure{
        \includegraphics[width=0.45\textwidth]{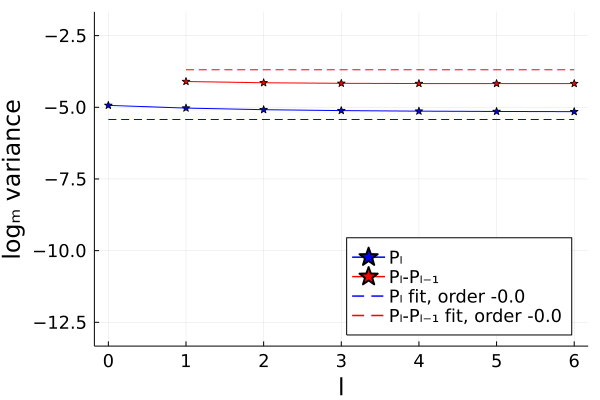}}
  \caption{The computational cost w.r.t accuracy threshold $\delta$ (left) and variance on different levels (right) using fully-random APMC and MLAPMC methods.}
  \label{fig:apmcfullmlmc}
\end{figure}

%------------------------------------------
\section{Conclusion}
\label{sec:conclusion}
%------------------------------------------
In this paper, we study the  Multilevel Monte Carlo method, and compare it with Monte Carlo method for applications in linear and nonlinear hyperbolic systems with randomness originated from either random parameters or randomized algorithms. In many cases, MLMC has lower cost than MC. Nevertheless, if the variance of MC solutions decays at the same rate as MLMC corrections, the standard MC performs well and a multilevel method may not provide improvement.

We conclude through both analytic studies and  a few numerical experiments with different levels of randomness, that the possible cause is white noise like random variables in the SPDE or random algorithm, which may influence the decay rates for MLMC corrections and lead to the deterioration of the MLMC complexity. If the white noise random variables depend only on time, the variances have first order decay with respect to the discretization level or time step, and both MLMC and MC have $O(\delta^{-3})$ computational cost. While if the white noise random variables depend further on space, the variances do not decay and both MLMC and MC have $O(\delta^{-4})$ cost. A rigorous analysis of the scalar advection equation with a random velocity is given and is consistent with our numerical results. 

In summary, we have identified three regimes for the performances of MLMC and MC for hyperbolic equations. More research is needed to investigate SPDEs driven by Gaussian and Levy white noise \cite{Holden1996, Walsh1986}, and to develop acceleration methods for SPDEs and random algorithms for interesting physical applications.

\bibliographystyle{siamplain}
\bibliography{refs.bib}

\appendix

% %------------------------------------------
% \section{Appendix}
% \label{sec:appendix}
% %------------------------------------------

%------------------------------------------
\section{Some known results of MLMC for differential equations}
\label{sec:appendix:mlmcliterature}
%------------------------------------------

We review some applications of MLMC in the literature. For consistency of notations, we always denote $P_l^{(l)}$, $P_l^{(l-1)}$ as the numerical solutions (at final time $T$ if time-dependent) using time step $\Delta t_l$ or $\Delta t_{l-1}$ and mesh size $\Delta x_l$ or $\Delta x_{l-1}$ on level $l$ respectively.
\begin{itemize}
    \item Stochastic differential equations \cite{abdulle2013stabilized,dereich2011multilevel,giles2008multilevel}. 
    
    % Consider a multidimensional SDE with general drift and volatility terms,
    % \begin{equation}
    %     du(t) = a(u,t)dt + b(u,t)dW(t),\quad 0<t<T,
    % \end{equation}
    % and given initial data $u_0$. A simple Euler discretization with time step $\Delta t$ is 
    % \begin{equation}
    %     u_{n+1} = u_n + a(u_n,t_n) \Delta t + b(u_n, t_n)\Delta W_n.
    % \end{equation}
    
    For example, we consider the forward Euler method with time step $\Delta t$ of the SDE 
    $$ \mathrm{d} u(t) = a(u, t) \mathrm{d} t + b(u, t) \mathrm{d} W,$$
    \begin{equation}
        u_{n+1} = u_n + a(u_n,t_n) \Delta t + b(u_n, t_n)\Delta W_n.
    \end{equation}
    
In this case, the random variables are Gaussian white noises. Due to the additivity of Gaussian distribution, it is natural to obtain a random variable on the coarser level by adding up the random variables on the finer level, leading to the reduction of $\Var\left[P_l^{(l)} - P_l^{(l-1)}\right]$. As proved in \cite{giles2008multilevel}, we have $\Var\left[P_{l}^{(l)}\right] = O(1)$, $\Var\left[P_{l}^{(l)} - P_{l}^{(l-1)}\right] = O(\Delta t_l)$, i.e. $\beta_0=0$, $\beta = 1$.

\item PDE with random coefficients \cite{barth2011multi,cliffe2011multilevel}.
    
In subsurface flow applications, the hydraulic conductivity can be modelled as a random field $k =k({\bf{x}}, \omega)$ on $D \times \Omega $. We have the following Darcy's equation with random coefficients in the form
\begin{equation}
        -\nabla \cdot (k({\bf{x}}, \omega)\nabla p({\bf{x}},\omega)) = f({\bf{x}}),\quad \text{in } D.
\end{equation}
To apply MLMC, we need to sample from the input random field $k({\bf{x}}, \omega)$, e.g. by a truncation of the Karhunen-Lo\`{e}ve (KL) expansion \cite{Karhunen1947Uber, Loeve1946Fonctions}. Then for each sample with fixed $\omega$, we solve the PDE with multigrid methods. In this case, the dimension of random variables is fixed by the truncation order of the KL expansion and same random variables are used to calculate $P_{l}^{(l-1)}$, $P_{l}^{(l)}$. The numerical results in \cite{cliffe2011multilevel} show that $\Var\left[P_{l}^{(l)}\right] = O(1)$, $\Var\left[P_{l}^{(l)} - p_{l}^{(l-1)}\right] = O((\Delta t)^2)$, i.e. $\beta_0 = 0$, $\beta = 2$.

\item Particle method for transport equations \cite{lovbak2021multilevel}. 
For the  BGK  model
\begin{equation}
        \partial_t f(x,v,t) +\frac{v}{\varepsilon}\partial_x f(x,v,t) = \frac{1}{\varepsilon^2}\left(\mathcal{M}(v)\rho(x,t)-f(x,v,t)\right),
\end{equation}
where $f(x,v,t)$ is the particle density distribution at time $t$, position $x$ with particle velocity $v$ and $\mathcal{M}(v)$ is the absolute Maxwellian. $\varepsilon$ is the Knudsen number, the dimensionaless mean free path. An asymptotic preserving particle scheme was proposed in \cite{dimarco2018asymptotic}. Given the current state of a particle in the position-velocity phase space, its position is updated based on its velocity in the transport step. Then, the particle changes its velocity with the probability $p_{\Delta t_{l}} = \frac{\Delta t_l}{\varepsilon^2 + \Delta t_l}$ in the collision step, such that the collision is performed if $\xi_l \leq p_{\Delta t_{l}}$ with $\xi_l \sim\mathcal{U}(0,1)$. 

To apply MLMC, L{\o}vbak, Samaey, and Vandewalle \cite{lovbak2021multilevel} proposed to use random variables $\left\{\xi_{l,k}\right\}_{k=1}^{\Gamma}$ in the finer level to construct coarser level random varibales, 
\begin{equation}\label{eqn:maxup}
        \xi_{l-1} = \left(\max_{1\leq k \leq \Gamma} \xi_{l,k}\right)^{\Gamma} \sim \mathcal{U}(0,1).
\end{equation}
If $\xi_{l-1}\leq p_{\Delta t_{l-1}}$, collision is performed on the coarser level. Therefore the trajectories of particles in the finer level and the coarser level are correlated. In this case, particles are independent of each other, similar to the SDE case. $\Var\left[P_{l}^{(l)} - P_{l}^{(l-1)}\right] = O(\Delta t)$, i.e. $\beta = 1$ is proved in \cite[Lemma 9]{lovbak2021multilevel}. The numerical results show that $\Var\left[P_{l}^{(l)}\right] = O(1)$, i.e. $\beta_0 = 0$.

\end{itemize}

%In this paper, we consider other two different cases: the random choice scheme and the white noise randomness. We use random choice on each grid point and at each time, while the white noise must be described by a independent random variable on each grid point and at each time. These two cases have similar characteristics that the number of variables is related to $\Delta t$, $\Delta x$. The random choice scheme uses the same trick as in \cite{lovbak2021multilevel} but is more difficult to analysis because of the interaction of random variables in spatial dimension.

%------------------------------------------
\section{Analysis of MLMC and MC in \S~\ref{sec:mlmc:costanalysis}}
\label{sec:appendix:costanalysis}
%------------------------------------------

%------------------------------------------
\subsection{Proof of \cref{thm:costmlmc}, cost of MLMC}
\label{sec:appendix:costanalysis:costMLMC}
%------------------------------------------

\begin{proof}
  Let $L = \left\lceil \frac{\log(\sqrt{2}c_1T^\alpha\delta^{-1})}{\alpha\log \Gamma}\right\rceil$, the bias term $c_1 (\Delta t_L )^\alpha$ is controlled from above and below by
  \begin{equation}
    \frac{1}{\sqrt{2}}\Gamma^{-\alpha}\delta < c_1 (\Delta t_L)^\alpha \leq \frac{1}{\sqrt{2}} \delta,
  \end{equation}
where $\Gamma = \Delta t_{l-1}/\Delta t_l$. Equivalently, we have 
\begin{equation}
  \left|\E\left[\hat{P}^{(L)} - P\right]\right|^2 = \left|\E\left[P^{(L)} - P\right]\right|^2 \leq \frac{1}{2} \delta^2.
\end{equation}
According to \eqref{eqn:KKT}, the variance of the MLMC estimate $\hat{P}^{(L)}$ is
% Let $N_l = \left\lceil 2\delta^{-2}(L+1)c_2\Delta t_l\right\rceil$, then 
\begin{equation}
  \Var\left[\hat{P}^{(L)}\right] = \sum_{l=0}^L N_l^{-1}V_l = \mu^{-1}\sum_{l=0}^L \sqrt{C_l V_l} = \frac{1}{2}\delta^2, 
  \label{eqn:mlmcvariance}
\end{equation}
which implies 
$\mu = 2\delta^{-2}\sum_{l=0}^L \sqrt{C_l V_l}$, and $N_l = \left\lceil \mu\sqrt{V_l/C_l} \right\rceil$, $l=0, \dots,L$. The total computational cost is
\begin{equation}
  \mathfrak{C} = \sum_{l=0}^L N_l C_l \leq \sum_{l=0}^L \left(\mu\sqrt{C_l V_l}+C_l\right) = 2\delta^{-2}\left(\sum_{l=0}^L \sqrt{C_l V_l}\right)^2 + \sum_{l=0}^L C_l.
\end{equation}
By properties \textit{\textbf{A}}\ref{Assum:1-2}, \textit{\textbf{A}}\ref{Assum:1-3}, we have
\begin{equation}
  \sum_{l=0}^L \sqrt{C_l V_l} \leq \sqrt{c_2c_3} \sum_{l=0}^L (\Delta t_l)^{\frac{\beta-\gamma}{2}} = \left\{ \begin{aligned}  
  & c_5, & \beta > \gamma, \\
  & L+1 \leq c_6 \log \delta^{-1}, & \beta = \gamma,\\
  & c_7 (\Delta t_L)^{\frac{\beta-\gamma}{2}} \leq c_8 \delta^{-(\gamma-\beta)/(2\alpha)}, & 0 < \beta < \gamma,
  \end{aligned}\right.
\end{equation}
and
\begin{equation}
  \sum_{l=0}^L C_l \leq c_3 \sum_{l=0}^L (\Delta t_l)^{-\gamma} \leq c_9 \delta^{-\gamma/\alpha} \leq c_9\delta^{-2}.
\end{equation}
In summary, the computational cost $\mathfrak{C}$ is bounded by
\begin{equation}
  \begin{aligned}
  \mathfrak{C} &\leq \left\{ \begin{aligned}  
  & c_4\delta^{-2}, & \beta > \gamma, \\
  & c_4\delta^{-2}\left(\log \delta^{-1}\right)^2, & \beta = \gamma,\\
  & c_4\delta^{-2-(\gamma-\beta)/\alpha}, & 0 < \beta < \gamma.
  \end{aligned}\right.
  \end{aligned}
\end{equation}
\end{proof}

%------------------------------------------
\subsection{Proof of \cref{thm:costmc}, cost of MC}
\label{sec:appendix:costanalysis:compleixtyMC}
%------------------------------------------

\begin{proof}
We can perform similar analysis for the adaptive MC given in \cref{alg:mc}. The difference is that, for MLMC the bound for the total variance in \eqref{eqn:mlmcvariance} is needed, while for MC, we only need the bound for the variance on each level, as $\Var\left[\hat{P}_l^{(l)}\right]$ in \eqref{eqn:mcvariance}.

We have similar estimate for the bias by taking $L = \left\lceil \frac{\log(\sqrt{2}c_1T^\alpha\delta^{-1})}{\alpha\log \Gamma}\right\rceil$,
  \begin{equation}
    \frac{1}{\sqrt{2}}\Gamma^{-\alpha}\delta < c_1 (\Delta t_L)^\alpha \leq \frac{1}{\sqrt{2}} \delta,
  \end{equation}
  where $\Gamma = \Delta t_{l-1}/\Delta t_l$, and equivalently,
\begin{equation}
  \left|\E\left[\hat{P}_{L}^{(L)} - P\right]\right|^2 = \left|\E\left[P^{(L)} - P\right]\right|^2 \leq \frac{1}{2} \delta^2.
\end{equation}
The variances of each level should be bounded by \begin{equation}
  \Var\left[\hat{P}_l^{(l)}\right] = N_l^{-1}\mathsf{V}_l \leq \frac{1}{2}\delta^2, \quad \Rightarrow \quad N_l = \left\lceil 2\delta^{-2} \mathsf{V}_{l}\right\rceil, \quad l=0,\dots,L.
  \label{eqn:mcvariance}
\end{equation}
to make sure that $\mse<\delta^2$.
The total computational cost $\mathfrak{C}^*$ is
\begin{equation}
  \mathfrak{C}^* = \sum_{l=0}^L N_l \mathfrak{C}\left[P_l^{(l)}\right] \leq 2\delta^{-2}  \sum_{l=0}^L \mathfrak{C}\left[P_l^{(l)}\right]\mathsf{V}_{l} + \sum_{l=0}^L \mathfrak{C}\left[P_l^{(l)}\right].
\end{equation}

With properties \textit{\textbf{A}}\ref{Assum:2-2}, \textit{\textbf{A}}\ref{Assum:2-3}, we have
\begin{equation}
  \sum_{l=0}^L \mathfrak{C}\left[P_l^{(l)}\right]\mathsf{V}_l \leq c_2c_3 \sum_{l=0}^L (\Delta t_l)^{\beta_0-\gamma} = \left\{ \begin{aligned}  
  & c_5, & \beta_0 > \gamma, \\
  & L+1 \leq c_6 \log \delta^{-1}, & \beta_0 = \gamma,\\
  & c_7 (\Delta t_L)^{\beta_0-\gamma} \leq c_8 \delta^{-(\gamma-\beta_0)/\alpha}, & 0 < \beta_0 < \gamma,
  \end{aligned}\right.
\end{equation}
and
\begin{equation}
  \sum_{l=0}^L \mathfrak{C}\left[P_l^{(l)}\right] \leq c_3 \sum_{l=0}^L (\Delta t_l)^{-\gamma} \leq c_9 \delta^{-\gamma/\alpha} \leq c_9\delta^{-2}.
\end{equation}

In summary, the computational cost $\mathfrak{C}^*$ is bounded by
\begin{equation}
  \begin{aligned}
  \mathfrak{C}^* &\leq \left\{ \begin{aligned}  
  & c_4\delta^{-2}, & \beta_0 > \gamma, \\
  & c_4\delta^{-2}\log \delta^{-1}, & \beta_0 = \gamma,\\
  & c_4\delta^{-2-(\gamma-\beta_0)/\alpha}, & 0 < \beta_0 < \gamma.
  \end{aligned}\right.
  \end{aligned}
\end{equation}
\end{proof}

\section{AP property for the deterministic method of the Jin-Xin model}
\label{sec:appendix:numericallimit}
For completeness, we formulate the AP property of the deterministic numerical method in \S~\ref{sec:comparison:randomchoice:apmc} in the following proposition.
\begin{proposition}
 The deterministic numerical method in \S~\ref{sec:comparison:randomchoice:apmc} is asymptotic preserving since it has the correct numerical $\varepsilon \to 0$ limit, namely, in the limit $\varepsilon \rightarrow 0$, the numerical solutions $u_i^n$, $v_i^n$ tends to the numerical solutions of the limit equations
 \begin{equation}\label{eqn:limitequation}
     v = bu, \quad  \frac{\partial u}{\partial t} + b\frac{\partial u}{\partial x} =0.
 \end{equation}
\end{proposition}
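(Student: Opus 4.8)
The plan is to pass to the limit $\varepsilon\to0$ directly inside the two-step scheme of \S~\ref{sec:comparison:randomchoice:apmc} and show that the iteration degenerates to a fixed, $\varepsilon$-independent three-point scheme for $u$ together with the algebraic relation $v=bu$; one then recognizes this limiting scheme as the deterministic scheme of \S~\ref{sec:comparison:randomchoice:apmc} applied to \eqref{eqn:limitequation}. The first observation I would record is that, on a fixed grid up to a fixed final time, every $u_i^n$, $v_i^n$ is a finite composition of affine maps whose coefficients are continuous (indeed rational) in $\varepsilon$ --- only the relaxation weight $\frac{\varepsilon}{\varepsilon+\Delta t}$ depends on $\varepsilon$ --- so the limits $\lim_{\varepsilon\to0}u_i^n$, $\lim_{\varepsilon\to0}v_i^n$ exist and may be computed grid-point by grid-point, step by step. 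I would then argue by induction on $n$.

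For the relaxation step I would use that $\frac{\varepsilon}{\varepsilon+\Delta t}\to0$ and $\frac{\Delta t}{\varepsilon+\Delta t}\to1$, so \eqref{rel:convex} forces $v_i^{n+1}\to b\,u_i^{n+\frac12}=b\,u_i^{n+1}$ for every $n\ge0$, whatever the state before the step. This already settles the base case even when the initial data do not satisfy $v_0=bu_0$: after a single full step one has $v_i^1=bu_i^1$ in the limit, the discrete counterpart of the initial layer being resolved in one step. Hence $v_i^n=bu_i^n$ holds in the limit for all $n\ge1$, and it remains only to identify the evolution of $u$.

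For the inductive step I would substitute $v_i^n=bu_i^n$ into \eqref{eqn:char}, obtaining $\rho_i^n=(\sqrt a+b)u_i^n$ and $l_i^n=(\sqrt a-b)u_i^n$, push these through the upwind update \eqref{con:char-convex}, transform back with \eqref{con:char-2uv}, and simplify using $\nu=\sqrt a\,\Delta t/\Delta x$; the expected outcome is
\begin{equation}
  u_i^{n+1}=u_i^{n+\frac12}=(1-\nu)\,u_i^n+\frac{\nu(\sqrt a+b)}{2\sqrt a}\,u_{i-1}^n+\frac{\nu(\sqrt a-b)}{2\sqrt a}\,u_{i+1}^n,
\end{equation}
while the relaxation limit again gives $v_i^{n+1}=bu_i^{n+1}$, closing the induction. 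Writing $\lambda=\Delta t/\Delta x$, the three weights are $1-\sqrt a\lambda$, $\frac12(\sqrt a+b)\lambda$ and $\frac12(\sqrt a-b)\lambda$; I would then check that they sum to $1$ and, under $a\ge b^2$ (so $\sqrt a\ge|b|$) and $\nu\le1$, are nonnegative, so the limit scheme is a monotone convex-combination scheme, and that a Taylor expansion gives convective speed $b$, i.e.\ consistency with $u_t+bu_x=0$. Comparing with \S~\ref{sec:comparison:randomchoice:apmc} run at $\varepsilon=0$ on the equilibrium manifold shows that this is exactly the numerical solution of \eqref{eqn:limitequation}.

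The hard part is essentially bookkeeping rather than analysis: one must handle the discrete initial layer cleanly (no compatibility condition on the initial data is needed, precisely because the relaxation step erases the memory of $v_i^0$), keep the characteristic change of variables \eqref{eqn:char}--\eqref{con:char-2uv} straight when composing the two substeps, and state explicitly that the $\varepsilon\to0$ limit is the \emph{relaxation} scheme for $u_t+bu_x=0$ --- carrying a numerical viscosity of size $O(\sqrt a\,\Delta x)$ --- rather than the bare upwind scheme; this distinction does not affect the asymptotic preserving property but is worth recording so that the phrase ``numerical solutions of the limit equations'' in the proposition is unambiguous.
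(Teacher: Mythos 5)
Your proposal is correct and follows essentially the same route as the paper: pass to the limit $\varepsilon\to 0$ in the relaxation step to get $v_i^{n+1}=bu_i^{n+1}$, substitute the equilibrium relation into the characteristic variables, and recover the three-point convex-combination scheme $u_i^{n+1}=(1-\nu)u_i^n+\frac{(\sqrt a+b)\nu}{2\sqrt a}u_{i-1}^n+\frac{(\sqrt a-b)\nu}{2\sqrt a}u_{i+1}^n$, which is the paper's first-order consistent discretization of $u_t+bu_x=0$ with numerical viscosity $\frac{\sqrt a\,\Delta x}{2}u_{xx}$. Your additions --- the continuity-in-$\varepsilon$ justification for exchanging limits, the induction, and the explicit treatment of the one-step discrete initial layer when $v_0\neq bu_0$ --- tighten what the paper presents only as a heuristic argument, but do not constitute a different method.
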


We provide a heuristic argument in the following, for complete theory, please refer to \cite{jin1999efficient}.
As $\varepsilon \rightarrow 0$, the implicit Euler's method used in relaxation step collapses to
\begin{equation} \label{eqn:numericalrelaxationlimit}
    v_i^{n+1} = bu_{i}^{n+1}, \quad n = 0, \dots, M_{l}-1.
\end{equation}
Applying \eqref{eqn:numericalrelaxationlimit} into the convection step gives
\begin{equation}
    \rho_{i}^{n} = (\sqrt{a}+b)u_{i}^{n}, \quad l_{i}^{n} = (\sqrt{a}-b)u_{i}^{n},
\end{equation}
and 
\begin{equation}
    u_{i}^{n+1} = (1-\nu) u_{i}^{n} + \frac{(\sqrt{a}+b)\nu}{2\sqrt{a}} u_{i-1}^{n} + \frac{(\sqrt{a}-b)\nu}{2\sqrt{a}} u_{i+1}^{n},
\end{equation}
which is equivalent to
\begin{equation}\label{eqn:numericallimit}
    \frac{u_{i}^{n+1}-u_{i}^{n}}{\Delta t} + b\frac{u_{i+1}^{n} - u_{i-1}^{n}}{2\Delta x} = \frac{\sqrt{a}\Delta x}{2} \frac{u_{i+1}^{n} - 2u_{i}^{n} + u_{i-1}^{n}}{(\Delta x)^2}.
\end{equation}
\eqref{eqn:numericalrelaxationlimit}, \eqref{eqn:numericallimit} is a first-order consistent and stable discretization of \eqref{eqn:limitequation}, with leading truncation error
\begin{equation}
    \frac{\sqrt{a}\Delta x}{2} \frac{\partial^2 u}{\partial x^2}.
\end{equation}
Thus this scheme has the correct asymptotic limit and is expected to capture the correct limiting behavior in the regime when $\varepsilon \ll 1$ and $\Delta t \gg \varepsilon$, with a first-order accuracy.
% \lzz{proof of asymptotic limit of AP scheme}

\section{Analysis for the APMC method in \S~\ref{sec:comparison:randomchoice:analysis}}
\label{sec:appendix:varianceanalysis}

\subsection{Proof of \cref{thm:det-energy}}
\begin{proof}
  Combining \eqref{con:char-convex} and \eqref{con:char-2uv}, we obtain the formula of $\E\left[u_i^{n+\frac{1}{2}}\right]$, $\E\left[v_i^{n+\frac{1}{2}}\right]$,
  \begin{equation}\label{det:uv-upw}
      \begin{aligned}
      \E\left[v_i^{n+\frac{1}{2}}\right] &= (1-\nu)\E\left[v_i^n\right]+\frac{\nu}{2} \left(\E\left[v_{i+1}^n\right]+\E\left[v_{i-1}^n\right]\right)-\frac{\nu\sqrt{a}}{2}\left(\E\left[u_{i+1}^n\right]-\E\left[u_{i-1}^n\right]\right),\\
      \E\left[u_i^{n+\frac{1}{2}}\right] &= (1-\nu)\E\left[u_i^n\right]+\frac{\nu}{2} \left(\E\left[u_{i+1}^n\right]+\E\left[u_{i-1}^n\right]\right)-\frac{\nu}{2\sqrt{a}}\left(\E\left[v_{i+1}^n\right]-\E\left[v_{i-1}^n\right]\right).\\
      \end{aligned}
  \end{equation}
  
  We compute the following quadratic sums and simplify them as follows
  \begin{equation}
    \begin{aligned}
      &\quad \sum_i\left[a\left(\E\left[u_i^{n+\frac{1}{2}}\right]\right)^2+\left(\E\left[v_i^{n+\frac{1}{2}}\right]\right)^2\right] = \frac{1}{2}\sum_i\left[\left(\E\left[\rho_i^{n+\frac{1}{2}}\right]\right)^2+\left(\E\left[l_i^{n+\frac{1}{2}}\right]\right)^2\right]\\
      &=\frac{(1-\nu)^2+\nu^2}{2}\sum_i\left[\left(\E\left[\rho_i^{n}\right]\right)^2+\left(\E\left[l_i^{n}\right]\right)^2\right]+\nu(1-\nu) \sum_i \left[\E\left[\rho_i^n\right]\E\left[\rho_{i+1}^n\right]+\E\left[l_i^n\right] \E\left[l_{i+1}^n\right]\right] \\
      &=\left[(1-\nu)^2+\nu^2\right]\sum_i\left[a\left(\E\left[u_i^{n}\right]\right)^2+\left(\E\left[v_i^{n}\right]\right)^2\right]+2\nu(1-\nu) \sum_i \left[a\E\left[u_i^n\right]\E\left[u_{i+1}^n\right]+\E\left[v_i^n\right] \E\left[v_{i+1}^n\right]\right], \\
    \end{aligned}
  \end{equation}
  \begin{equation}\label{det:square-uv}
    \begin{aligned}
      &\quad \sum_i\left[\E\left[u_i^{n+\frac{1}{2}}\right]\E\left[v_i^{n+\frac{1}{2}}\right]\right] = \frac{1}{4\sqrt{a}}\sum_i\left[\left(\E\left[\rho_i^{n+\frac{1}{2}}\right]\right)^2-\left(\E\left[l_i^{n+\frac{1}{2}}\right]\right)^2\right]\\
      &=\frac{(1-\nu)^2+\nu^2}{4\sqrt{a}}\sum_i\left[\left(\E\left[\rho_i^{n}\right]\right)^2-\left(\E\left[l_i^{n}\right]\right)^2\right]+\frac{\nu(1-\nu)}{2\sqrt{a}} \sum_i \left[\E\left[\rho_i^n\right]\E\left[\rho_{i+1}^n\right]-\E\left[l_i^n\right] \E\left[l_{i+1}^n\right]\right] \\
      &=\left[(1-\nu)^2+\nu^2\right]\sum_i\left[\E\left[u_i^{n}\right]\E\left[v_i^{n}\right]\right]+\nu(1-\nu) \sum_i \left[\E\left[u_i^n\right]\E\left[v_{i+1}^n\right]+\E\left[v_i^n\right] \E\left[u_{i+1}^n\right]\right], \\
    \end{aligned}
  \end{equation}
  which lead to the following estimate
  \begin{equation}\label{det:energy-h-step}
    \begin{aligned}
      \mathcal{E}_{n+\frac{1}{2}}&= \sum_i\left[(a-b^2)\left(\E\left[u_i^{n+\frac{1}{2}}\right]\right)^2+\left(\E\left[bu_i^{n+\frac{1}{2}} - v_i^{n+\frac{1}{2}}\right]\right)^2\right]\Delta x \\
      &= \sum_i\left[a\left(\E\left[u_i^{n+\frac{1}{2}}\right]\right)^2+\left(\E\left[v_i^{n+\frac{1}{2}}\right]\right)^2-2b\E\left[u_i^{n+\frac{1}{2}}\right]\E\left[v_i^{n+\frac{1}{2}}\right]\right]\Delta x  \\
      &= [(1-\nu)^2+\nu^2]\sum_i\left[(a-b^2)\left(\E\left[u_i^{n}\right]\right)^2+\left(\E\left[bu_i^{n} - v_i^{n}\right]\right)^2\right]\Delta x \\
      &\quad + 2\nu(1-\nu)\sum_i\left[(a-b^2)\left(\E\left[u_i^{n}\right]\E\left[u_{i+1}^n\right]\right) + \E\left[bu_i^{n} - v_i^{n}\right]\E\left[bu_{i+1}^{n}-v_{i+1}^{n}\right]\right]\Delta x \\
      &\leq \sum_i\left[(a-b^2)\left(\E\left[u_i^{n}\right]\right)^2+\left(\E\left[bu_i^{n} - v_i^{n}\right]\right)^2\right]\Delta x =\mathcal{E}_n.\\
    \end{aligned}
  \end{equation}
  
  In the relaxation step, by \eqref{rel:convex}, we have
  \begin{equation}\label{det:rel-uv}
    \E\left[u_i^{n+1}\right] = \E\left[u_i^{n+\frac{1}{2}}\right], \quad \E\left[bu_i^{n+1}-v_i^{n+1}\right] = \displaystyle p \E\left[bu_i^{n+\frac{1}{2}} - v_i^{n+\frac{1}{2}}\right],
  \end{equation}
  thus
  \begin{equation}\label{det:energy-1-step}
    \begin{aligned}
      \mathcal{E}_{n+1} &= \sum_i\left[(a-b^2)\left(\E\left[u_i^{n+\frac{1}{2}}\right]\right)^2+\displaystyle p^2\left(\E\left[bu_i^{n+\frac{1}{2}} - v_i^{n+\frac{1}{2}}\right]\right)^2\right]\Delta x \leq \mathcal{E}_{n+\frac{1}{2}}\leq \mathcal{E}_n.
    \end{aligned}
  \end{equation}
  Therefore the energy decreases monotonically. And \eqref{det:prop-ubound} can be easily derived from \eqref{det:prop-enerdecay}. The boundedness of $\left\|\E\left[v^n\right]\right\|_{2}$ \eqref{det:prop-vbound} follows by the fact that 
  \begin{equation}\label{det:vtobuv}
    \sum_i\left(\E\left[v_i^n\right]\right)^2\Delta x \leq 2 \sum_i\left[\left(\E\left[bu_i^n - v_i^n\right]\right)^2 + b^2\left(\E\left[u_i^n\right]\right)^2\right]\Delta x \leq \frac{2a}{a-b^2}\mathcal{E}_n.
  \end{equation}

\end{proof}

\subsection{Proof of \cref{prop:det-buv}}
\begin{proof}
  \eqref{det:energy-1-step} implies that
  \begin{equation}\label{det:energy-1-step-plus}
    \begin{aligned}
      \mathcal{E}_{n+1} +\sum_i(1-\displaystyle p^2)\left(\E\left[bu_i^{n+\frac{1}{2}} - v_i^{n+\frac{1}{2}}\right]\right)^2\Delta x = \mathcal{E}_{n+\frac{1}{2}}\leq \mathcal{E}_n.
    \end{aligned}
  \end{equation}
  Therefore
  \begin{equation}\label{det:energy-full}
  \begin{aligned}
    &\quad\mathcal{E}_{n+1} +\sum_{j=0}^n\sum_i(1-\displaystyle p^2)\left(\E\left[bu_i^{j+\frac{1}{2}}-v_i^{j+\frac{1}{2}}\right]\right)^2\Delta x\\
    &\leq \mathcal{E}_{n} +\sum_{j=0}^{n-1}\sum_i(1-\displaystyle p^2)\left(\E\left[bu_i^{j+\frac{1}{2}}-v_i^{j+\frac{1}{2}}\right]\right)^2\Delta x\\
    &\leq \cdots\\
    &\leq \mathcal{E}_{1} +\sum_i(1-\displaystyle p^2)\left(\E\left[bu_i^{\frac{1}{2}}-v_i^{\frac{1}{2}}\right]\right)^2\Delta x\\
    &\leq \mathcal{E}_0.\\
  \end{aligned}
\end{equation}
Noting that $\E\left[bu_i^{n+1}-v_i^{n+1}\right] = \displaystyle p\E\left[bu_i^{n+\frac{1}{2}} - v_i^{n+\frac{1}{2}}\right]$, we have
\begin{equation}\label{det:sum-square-buv}
  \sum_{j=1}^{n+1} \sum_i \left(\E\left[bu_i^{j}-v_i^{j}\right]\right)^2 \Delta x = \displaystyle p^2\sum_{j=0}^n\sum_i\left(\E\left[bu_i^{j+\frac{1}{2}}-v_i^{j+\frac{1}{2}}\right]\right)^2\Delta x\leq \frac{\displaystyle p^2}{1-\displaystyle p^2} \mathcal{E}_0,\quad \forall n,
\end{equation}
namely, \eqref{det:prop-sum-vbu} holds. Therefore, as $ \varepsilon\rightarrow 0$, $\displaystyle p\rightarrow 0$, $\E\left[bu^n-v^n\right]$ converges to zero in $\ell^2$.
\end{proof}

\subsection{Proof of \cref{thm:s-sto-energy}, for the semi-random APMC}
\begin{proof}
We have the following identities, for the change of variances in the convection step,
\begin{equation}\label{s-sto:var-uv}
  \begin{aligned}
    &\quad \sum_i\left[a\Var\left[\tilde{u}_i^{n+\frac{1}{2}}\right]+\Var\left[\tilde{v}_i^{n+\frac{1}{2}}\right]\right] = \frac{1}{2}\sum_i\left[\Var\left[\tilde{\rho}_i^{n+\frac{1}{2}}\right]+\Var\left[\tilde{l}_i^{n+\frac{1}{2}}\right]\right]\\
    &=\left[(1-\nu)^2+\nu^2\right]\sum_i\left[a\Var\left[\tilde{u}_i^{n}\right]+\Var\left[\tilde{v}_i^{n}\right]\right]+2\nu(1-\nu) \sum_i a\Cov\left[\tilde{u}_i^n,\tilde{u}_{i+1}^n\right]+\Cov\left[\tilde{v}_i^n, \tilde{v}_{i+1}^n\right], \\
    &\quad \sum_i\Cov\left[\tilde{u}_i^{n+\frac{1}{2}},\tilde{v}_i^{n+\frac{1}{2}}\right] = \frac{1}{4\sqrt{a}}\sum_i\left[\Var\left[\tilde{\rho}_i^{n+\frac{1}{2}}\right]-\Var\left[\tilde{l}_i^{n+\frac{1}{2}}\right]\right]\\
    &=\left[(1-\nu)^2+\nu^2\right]\sum_i\Cov\left[\tilde{u}_i^{n},\tilde{v}_i^{n}\right]+\nu(1-\nu) \sum_i \left[\Cov\left[\tilde{u}_i^n,\tilde{v}_{i+1}^n\right]+\Cov\left[\tilde{v}_i^n,\tilde{u}_{i+1}^n\right]\right]. \\
  \end{aligned}
\end{equation}

Combining above identities, we have
\begin{equation}\label{s-sto:energy-h-step}
  \begin{aligned}
    &\quad \sum_i\left[(a-b^2)\Var\left[\tilde{u}_i^{n+\frac{1}{2}}\right]+\Var\left[b\tilde{u}_i^{n+\frac{1}{2}}-\tilde{v}_i^{n+\frac{1}{2}}\right]\right] \\
    &= [(1-\nu)^2+\nu^2]\sum_i\left[(a-b^2)\Var\left[\tilde{u}_i^{n}\right]+\Var\left[b\tilde{u}_i^{n}-\tilde{v}_i^{n}\right]\right]\\
    &\quad + 2\nu(1-\nu)\sum_i\left[(a-b^2)\Cov\left[\tilde{u}_i^{n},\tilde{u}_{i+1}^n\right] + \Cov\left[b\tilde{u}_i^{n}-\tilde{v}_i^{n},b\tilde{u}_{i+1}^{n}-\tilde{v}_{i+1}^{n}\right]\right]\\
    &\leq \sum_i\left[(a-b^2)\Var\left[\tilde{u}_i^{n}\right]+\Var\left[b\tilde{u}_i^{n}-\tilde{v}_i^{n}\right]\right].\\
  \end{aligned}
\end{equation}

We then treat the relaxation step, using the conditional variance formula
\begin{equation}\label{s-sto:cond-var-v}
  \begin{aligned}
    \Var\left[\tilde{u}_i^{n+1}\right] & = \Var\left[\tilde{u}_i^{n+\frac{1}{2}}\right],\\
    \Var\left[b\tilde{u}_i^{n+1}-\tilde{v}_i^{n+1}\right] &= \displaystyle p \Var\left[b\tilde{u}_i^{n+\frac{1}{2}}-\tilde{v}_i^{n+\frac{1}{2}}\right]+\displaystyle p(1-\displaystyle p)\left(\E\left[b\tilde{u}_i^{n+\frac{1}{2}}-\tilde{v}_i^{n+\frac{1}{2}}\right]\right)^2.\\
  \end{aligned}
\end{equation}

By substituting \eqref{s-sto:energy-h-step} into \eqref{s-sto:cond-var-v}, we have 
\begin{equation}\label{s-sto:energy-1-step}
  \begin{aligned}
    &\quad \sum_i\left[(a-b^2)\Var\left[\tilde{u}_i^{n+1}\right]+\Var\left[b\tilde{u}_i^{n+1}-\tilde{v}_i^{n+1}\right]\right] \\
    &\leq \sum_i\left[(a-b^2)\Var\left[\tilde{u}_i^{n}\right]+\Var\left[b\tilde{u}_i^{n}-\tilde{v}_i^{n}\right]\right]+\displaystyle p(1-\displaystyle p)\sum_i\left(\E\left[b\tilde{u}_i^{n+\frac{1}{2}}-\tilde{v}_i^{n+\frac{1}{2}}\right]\right)^2\\
    &\leq \cdots\\
    &\leq 0 + \displaystyle p(1-\displaystyle p)\sum_{j=0}^n \sum_i \left(\E\left[b\tilde{u}_i^{j+\frac{1}{2}}-\tilde{v}_i^{j+\frac{1}{2}}\right]\right)^2\\
    &= \frac{1-\displaystyle p}{\displaystyle p}\sum_{j=0}^n \sum_i \left(\E\left[b\tilde{u}_i^{j+1}-\tilde{v}_i^{j+1}\right]\right)^2.\\
  \end{aligned}
\end{equation}
\eqref{s-sto:prop-var-bd} follows by a combination of \eqref{s-sto:energy-1-step} and \cref{prop:det-buv}.
\end{proof}

\subsection{Proof of \cref{thm:f-sto-energy}, for the fully-random APMC}
\begin{proof}
We have the similar identities for the  change of the variances, when we compute $u_i^{n+\frac{1}{2}},v_i^{n+\frac{1}{2}}$ from the characteristic variables $\rho_i^{n+\frac{1}{2}}$, $l_i^{n+\frac{1}{2}}$,
\begin{equation}\label{f-sto:var-auv}
  \begin{aligned}
    \sum_i \left[a\Var\left[u_i^{n+\frac{1}{2}}\right]+\Var\left[v_i^{n+\frac{1}{2}}\right]\right] &= \sum_i \left[\Var\left[\frac{\rho_i^{n+\frac{1}{2}}+l_i^{n+\frac{1}{2}}}{2}\right]+\Var\left[\frac{\rho_i^{n+\frac{1}{2}}-l_i^{n+\frac{1}{2}}}{2}\right]\right] \\ &=\frac{1}{2}\sum_i\left[\Var\left[\rho_i^{n+\frac{1}{2}}\right]+\Var\left[l_i^{n+\frac{1}{2}}\right]\right],\\
    \sum_i\Cov\left[u_i^{n+\frac{1}{2}},v_i^{n+\frac{1}{2}}\right] &= \sum_i\Cov\left[\frac{\rho_i^{n+\frac{1}{2}}+l_i^{n+\frac{1}{2}}}{2\sqrt{a}},\frac{\rho_i^{n+\frac{1}{2}}-l_i^{n+\frac{1}{2}}}{2}\right] \\
    &=\frac{1}{4\sqrt{a}}\sum_i\left[\Var\left[\rho_i^{n+\frac{1}{2}}\right]-\Var\left[l_i^{n+\frac{1}{2}}\right]\right].\\
  \end{aligned}
\end{equation}

Combining the identities in \eqref{f-sto:var-auv}, we have
\begin{equation}\label{f-sto:energy-hh-step}
  \begin{aligned}
    &\quad \sum_i\left[(a-b^2)\Var\left[u_i^{n+\frac{1}{2}}\right]+\Var\left[bu_i^{n+\frac{1}{2}}-v_i^{n+\frac{1}{2}}\right]\right] \\
    &=\frac{1}{2}\sum_i\left[\Var\left[\rho_i^{n+\frac{1}{2}}\right]+\Var\left[l_i^{n+\frac{1}{2}}\right]\right]-\frac{b}{2\sqrt{a}}\sum_i\left[\Var\left[\rho_i^{n+\frac{1}{2}}\right]-\Var\left[l_i^{n+\frac{1}{2}}\right]\right]\\
    &=\frac{1}{2}\sum_i\left[\left(1-\frac{b}{\sqrt{a}}\right)\Var\left[\rho_i^{n+\frac{1}{2}}\right]+\left(1+\frac{b}{\sqrt{a}}\right)\Var\left[l_i^{n+\frac{1}{2}}\right]\right].\\
  \end{aligned}
\end{equation}

We apply the conditional variance formula
\begin{equation}\label{f-sto:con-var-rho}
  \begin{aligned}
    &\Var\left[\rho_i^{n+\frac{1}{2}}\right]=(1-\nu)\Var\left[\rho_i^{n}\right]+\nu\Var\left[\rho_{i-1}^{n}\right]+\nu(1-\nu)\left(\E\left[\rho_{i}^n-\rho_{i-1}^n\right]\right)^2,\\
    &\Var\left[l_i^{n+\frac{1}{2}}\right]=(1-\nu)\Var\left[l_i^{n}\right]+\nu\Var\left[l_{i+1}^{n}\right]+\nu(1-\nu)\left(\E\left[l_{i+1}^n-l_{i}^n\right]\right)^2,\\
  \end{aligned}
\end{equation}
and write
\begin{equation}\label{f-sto:var-rho2u}
  \begin{aligned}
    &\Var\left[\rho_i^{n}\right]=\Var\left[\sqrt{a}u_i^{n}\right]+\Var\left[v_i^{n}\right]+2\sqrt{a}\Cov\left[u_{i}^n,v_{i}^n\right],\\
    &\Var\left[l_i^{n}\right]=\Var\left[\sqrt{a}u_i^{n}\right]+\Var\left[v_i^{n}\right]-2\sqrt{a}\Cov\left[u_{i}^n,v_{i}^n\right].\\
  \end{aligned}
\end{equation}

By substituting \eqref{f-sto:con-var-rho}, \eqref{f-sto:var-rho2u} into \eqref{f-sto:energy-hh-step}, it leads to the following identity
\begin{equation}\label{f-sto:energy-h-s}
  \begin{aligned}
    &\quad \sum_i\left[(a-b^2)\Var\left[u_i^{n+\frac{1}{2}}\right]+\Var\left[bu_i^{n+\frac{1}{2}}-v_i^{n+\frac{1}{2}}\right]\right] \\
    &=\sum_i\left[a\Var\left[u_i^{n}\right]+\Var\left[v_i^{n}\right]-2b\Cov\left[u_i^{n},v_i^{n}\right]\right] \\
    &\quad +\frac{\nu(1-\nu)}{2}\left[\left(1-\frac{b}{\sqrt{a}}\right)\sum_i\left(\E\left[\rho_i^{n}-\rho_{i-1}^{n}\right]\right)^2+\left(1+\frac{b}{\sqrt{a}}\right)\sum_i\left(\E\left[l_{i+1}^{n}-l_{i}^{n}\right]\right)^2\right].
  \end{aligned}
\end{equation}

We then derive
\begin{equation}\label{f-sto:varia-extra}
  \begin{aligned}
    &\quad \frac{1}{2}\left[\left(1-\frac{b}{\sqrt{a}}\right)\sum_i\left(\E\left[\rho_i^{n}-\rho_{i-1}^{n}\right]\right)^2+\left(1+\frac{b}{\sqrt{a}}\right)\sum_i\left(\E\left[l_{i+1}^{n}-l_{i}^{n}\right]\right)^2\right]\\
    &= \sum_i\left[a\left(\E\left[u_{i+1}^{n}-u_{i}^{n}\right]\right)^2+\left(\E\left[v_{i+1}^{n}-v_{i}^{n}\right]\right)^2-2b\E\left[u_{i+1}^{n}-u_{i}^{n}\right]\E\left[v_{i+1}^{n}-v_{i}^{n}\right]\right]\\
    &= \sum_i\left[(a-b^2)\left(\E\left[u_{i+1}^{n}-u_{i}^{n}\right]\right)^2+\left(\E\left[b\left(u_{i+1}^{n}-u_i^{n+1}\right)-\left(v_{i+1}^{n}-v_i^{n+1}\right)\right]\right)^2\right]:=V_n, \\
  \end{aligned}
\end{equation}
by noticing that
\begin{equation}\label{f-sto:varia-rho}
  \begin{aligned}
    \left(\E\left[\rho_i^{n}-\rho_{i-1}^{n}\right]\right)^2 &= \left(\E\left[\sqrt{a}u_i^{n}-\sqrt{a}u_{i-1}^{n}+v_i^{n}-v_{i-1}^{n}\right]\right)^2,\\
    \left(\E\left[l_{i+1}^{n}-l_{i}^{n}\right]\right)^2 &= \left(\E\left[\sqrt{a}u_{i+1}^{n}-\sqrt{a}u_{i}^{n}-v_{i+1}^{n}-v_{i}^{n}\right]\right)^2.
  \end{aligned}
\end{equation}

In the convective part, we have
\begin{equation}\label{f-sto:energy-h-step}
\begin{aligned}
    &\quad \sum_i\left[(a-b^2)\Var\left[u_i^{n+\frac{1}{2}}\right]+\Var\left[bu_i^{n+\frac{1}{2}}-v_i^{n+\frac{1}{2}}\right]\right] \\
    &=\sum_i\left[(a-b^2)\Var\left[u_i^{n}\right]+\Var\left[bu_i^{n}-v_i^{n}\right]\right] +\nu(1-\nu)V_n,
\end{aligned}
 \end{equation}
and also $V_n \Delta x\leq \mathcal{C}_0(\Delta x)^2$ by \eqref{rem:square-variation}.

The analysis of the relaxation part is similar to that of \eqref{s-sto:cond-var-v}, namely,
\begin{equation}\label{f-sto:energy-full}
  \begin{aligned}
    &\quad \sum_i\left[(a-b^2)\Var\left[u_i^{n+1}\right]+\Var\left[bu_i^{n+1}-v_i^{n+1}\right]\right] \\
    &\leq \sum_i\left[(a-b^2)\Var\left[u_i^{n+\frac{1}{2}}\right]+\Var\left[bu_i^{n+\frac{1}{2}}-v_i^{n+\frac{1}{2}}\right]\right]+\displaystyle p(1-\displaystyle p)\sum_i\left(\E\left[bu_i^{n+\frac{1}{2}}-v_i^{n+\frac{1}{2}}\right]\right)^2 \\
    &\leq \sum_i\left[(a-b^2)\Var\left[u_i^{n}\right]+\Var\left[bu_i^{n}-v_i^{n}\right]\right]+\displaystyle p(1-\displaystyle p)\sum_i\left(\E\left[bu_i^{n+\frac{1}{2}}-v_i^{n+\frac{1}{2}}\right]\right)^2 + \nu(1-\nu)V_n \\
    &\leq \cdots\\
    % &\leq 0 + \sum_{j=0}^n \displaystyle p(1-\displaystyle p)\sum_i\left(\E\left[bu_i^{j+\frac{1}{2}}-v_i^{j+\frac{1}{2}}\right]\right)^2 + \sum_{j=0}^n \nu(1-\nu)V_n\\
    &\leq \frac{(1-\displaystyle p)}{\displaystyle p}\sum_{j=1}^{n+1} \sum_i\left(\E\left[bu_i^{j}-v_i^{j}\right]\right)^2+\nu(1-\nu)\sum_{j=0}^nV_n.\\
  \end{aligned}
\end{equation}
Thus
\begin{equation}\label{f-sto:energy-bound}
  \begin{aligned}
    \sum_i \left[(a-b^2)\Var\left[u_i^{n}\right]+\Var\left[bu_i^{n}-v_i^{n}\right]\right] \Delta x &\leq \frac{\displaystyle p}{1+\displaystyle p} \mathcal{E}_0+\nu(1-\nu)n\mathcal{C}_0 (\Delta x)^2\\
    &\leq \frac{\displaystyle p}{1+\displaystyle p} \mathcal{E}_0+\frac{a(1-\nu)}{\nu}T\mathcal{C}_0 \Delta t, \quad \forall n,
  \end{aligned}
\end{equation}
where
\begin{equation*}
  \begin{aligned}
    \mathcal{E}_0 & = \sum_i\left[(a-b^2)\left(u_i^{0}\right)^2+\left(bu_i^{0}-v_i^{0}\right)^2\right] \Delta x,\\
    \mathcal{C}_0 & = \sum_i\left[(a-b^2)\left(\frac{u_{i+1}^0-u_i^{0}}{\Delta x}\right)^2+\left[b\left(\frac{u_{i+1}^0-u_i^{0}}{\Delta x}\right)-\left(\frac{v_{i+1}^0-v_i^{0}}{\Delta x}\right)\right]^2\right] \Delta x.
  \end{aligned}
\end{equation*}
\end{proof}

\end{document}